\newcommand{\scalar}[2]{\langle#1\,,#2\rangle}
\newcommand{\norm}[1]{\|#1\|}
\newcommand{\normm}[1]{|\negthinspace\|#1\|\negthinspace|}
\newcommand{\A}{\mathcal{A}}
\renewcommand{\H}{\mathcal{H}}
\renewcommand{\L}{\mathcal{L}}
\newcommand{\D}{\mathcal{D}}
\newcommand{\Df}{\mathcal{D}_{\mathrm{Fin}}}
\newcommand{\bQ}{\widehat{Q}}
\newcommand{\bD}{\widehat{\D}}
\renewcommand{\d}{\mathrm{d}}
\renewcommand{\a}{\alpha} 
\newcommand{\1}{\mathbb{I}}
\newcommand{\CC}{\mathbb{C}}
\newcommand{\R}{\mathbb{R}}
\newcommand{\Z}{\mathbb{Z}}
\newcommand{\N}{\mathbb{N}}
\newcommand{\map}[5][\phantom]{
\begin{array}{ccl}
\mathllap{#1{\,:\,}}{#2}&\to&{#3}\\
          {#4}&\mapsto&{#5}
\end{array}
}
\newtheorem{theorem}{Theorem}
\newtheorem{corollary}[theorem]{Corollary}
\newtheorem{proposition}[theorem]{Proposition}
\newtheorem{lemma}[theorem]{Lemma}
\newtheorem{example}[theorem]{Example}
\theoremstyle{definition}
\newtheorem{definition}[theorem]{Definition}
\theoremstyle{remark}
\newtheorem*{remark}{Remark}
\newlist{Henumerate}{enumerate}{1}
\setlist[Henumerate, 1]{label = \textbf{H\arabic*:}, ref = {{\bf H\arabic*}}}
\numberwithin{equation}{section}
\numberwithin{theorem}{section}
\title[Representation of quadratic forms and orthogonal additivity]{
Representation of non-semibounded quadratic forms and orthogonal additivity
}
\author[A.~Ibort]{Alberto Ibort}
\author[J.G.~Llavona]{Jos\'e G.~Llavona}
\author[F.~Lled\'o]{Fernando Lled\'{o}}
\author[J.M.~P\'erez-Pardo]{Juan Manuel P\'erez-Pardo}
\address[A.~Ibort, F.~Lled\'o, J.M.~P\'erez-Pardo]{
Universidad Carlos~III de Madrid, Avda. de la Universidad 30, 28911 Legan\'es (Madrid), Spain.
}
\address[A.~Ibort, F.~Lled\'o, J.M.~P\'erez-Pardo]{
Instituto de Ciencias Matem\'{a}ticas (CSIC - UAM - UC3M - UCM), Nicol\'as Cabrera, 13-15, 28049 Cantoblanco (Madrid), Spain.
}
\address[J.G.~Llavona]{
Universidad Complutense de Madrid, Plaza Ciencias, 3, 28040 Madrid, Spain.
}
\date{\today}
\thanks{
A.I.~was partially supported by Spanish Ministry of Economy and
Competitiveness through project DGI MTM2017-84098-P, by the
\emph{Severo Ochoa} Program for Centers of Excellence in R\&D
(SEV-2015-0554) and by QUITEMAD+, S2013/ICE-2801.
J.G.L.~was partially supported by Spanish Ministry of Economy and
Competitiveness through project \hbox{DGI MTM2014-54692-P}.
F.L.~was partially supported by Spanish Ministry of Economy and
Competitiveness through project DGI MTM2017-84098-P and by the
\emph{Severo Ochoa} Program for Centers of Excellence in R\&D
(SEV-2015-0554).
J.M.P. is supported by QUITEMAD+, S2013/ICE-2801 and the ``Juan de la Cierva - Incorporaci\'on" Project 2018/00002/001 and is partially supported by Spanish Ministry of Economy and
Competitiveness through project DGI MTM2017-84098-P}
\subjclass[2010]{47A07,47B25,46N50,47N50}
\keywords{Representations non-semibounded quadratic forms, direct integrals, orthogonal additivity, spectral theorem}
\begin{document}

\maketitle

\begin{abstract}
A representation theorem for non-semibounded Hermitian quadratic forms in terms of a (non-semibounded) self-adjoint operator is proven.
The main assumptions are closability of the Hermitian quadratic form, the direct integral structure of the underlying Hilbert space and orthogonal additivity.
We apply this result to several examples, including the position operator in quantum mechanics and quadratic forms invariant under a unitary
representation of a separable locally compact group. The case of invariance under a compact group is also discussed in detail.
\end{abstract}

\section{Introduction}

Integral representation theorems for functionals defined on Banach or Hilbert spaces are fundamental in solving many problems and applications related to them.
In addition, the representation theorem of semibounded, closed and Hermitian quadratic forms is the cornerstone of many representation theorems that can be given for quadratic forms (see, e.g., \cite[Chapter~VI~\S2]{kato95}).
It goes back to the pioneering work in the 1950s by Friedrichs, Kato, Lax, Milgram, and others (see comments to Section~VIII.6 in \cite{reed-simon-1}).
In its simplest version, this result provides a representation of the quadratic form in terms of a densely defined, semibounded and self-adjoint operator.
Hence, the spectral theorem applied to the self-adjoint operator gives finally the desired integral representation as the following example shows
(see, e.g., \cite[\S~VIII.6, Example 2]{reed-simon-1}).

\begin{example}\label{ex:trepresentsq}
Let $T$ be a self-adjoint operator on a Hilbert space $\H$
with dense domain \hbox{$\D(T)\subset\H$}. Let $E\colon \mathrm{Borel}(\R)\to \mathrm{Proj}(\H)$ be the resolution of the identity associated to the operator $T$,
where $\mathrm{Borel}(\R)$ and $\mathrm{Proj}(\H)$ denote the $\sigma$-algebra of Borel sets on $\R$ and the lattice of orthogonal projections on $\H$, respectively.
For any $\Phi,\Psi\in\H$ define a complex measure on Borel sets of $\R$ by
$$\nu_{\Phi,\Psi}(\sigma) : = \scalar{\Phi}{E(\sigma)\Psi},\quad \sigma \in\mathrm{Borel}(\R)\;.$$
We will denote the corresponding positive measure simply as $\nu_{\Phi}(\sigma) := \nu_{\Phi,\Phi}(\sigma)$;
note that from the properties of the resolution of the identity we have $\nu_{\Phi}(\R)=\|\Phi\|^2$.
The domain of the operator $T$ can be characterised as
$$\D(T) = \left\{ \Phi \in \H \mid \int_\R  \lambda^2 \d\nu_\Phi(\lambda) < \infty \right\}$$
and we consider the following quadratic form defined on $\D(T)$
\begin{equation}\label{eq:defQsa}
    Q(\Phi,\Psi):=\scalar{\Phi}{T\Psi}=\int_\R \lambda\, \d\nu_{\Phi,\Psi}(\lambda)\;.
\end{equation}
\end{example}

The key hypothesis of the representation theorem mentioned before are semiboundedness and closability of the Hermitian quadratic form.
While the latter is shown to be a necessary condition (see Definition~\ref{def:strongrepresentability} and Theorem~\ref{thm:reprimpliesclosable})
the previous example shows that semiboundedness is not. In fact, the quadratic form $Q$ given in Eq.~\eqref{eq:defQsa}
is represented by a, in general,  non-semibounded self-adjoint operator $T$, see also the example of the quantum mechanical position operator on $L^2(\R)$ in Subsection~\ref{subsec:position}).

Over the years, many authors have tried to remove the condition of semiboundedness and have proved that it can be weakened in various ways.
For instance, in terms of sectorial quadratic forms (see \cite{kato55, kato95}), accretive quadratic forms (cf.\ \cite{mcintosh1})
or indefinite metric spaces, which appear naturally in perturbation theory of, e.g., Dirac-Coulomb operators  (cf.\ \cite{grubisic13, schmitz15}).
None of these generalisations can be applied to the general situation described by the example above without further restrictions on the operator $T$.

In Theorem~\ref{thm:mainthm1} we give sufficient conditions for a closable, non-semibounded quadratic form to be strongly representable by a self-adjoint operator $T$ (see Definition~\ref{def:strongrepresentability} for details) and allowing an integral representation as in Eq.~\eqref{eq:defQsa}. These analytical conditions include a direct integral decomposition of the underlying Hilbert space (cf.\ conditions \textbf{H1}, \textbf{H2} and \textbf{H3} in Section~\ref{sect:representation}). In particular, these hypothesis allow to write the following formula for the extension $(\bQ,\bD)$ of the given quadratic form:
\[
 \bQ(\Phi) = \scalar{|T|^{1/2}\Phi}{(\operatorname{sgn}T) |T|^{1/2}\Phi}=\int_\R \lambda \;\d\nu_\Phi(\lambda) \;,\quad\Phi \in \bD=\D(|T|^{1/2})\;,
\]
where $\operatorname{sgn}T : = E((0,\infty)) - E((-\infty,0))$ and 
$\nu_\Phi(\cdot) = \norm{E(\cdot)\Phi}^2$ is the natural positive spectral measure. 
Conversely, if the quadratic form is strongly representable by a self-adjoint operator, then its spectral decomposition $T=\int_\R \lambda\, \d E(\lambda)$ allows to give a decomposition of the Hilbert space verifying the conditions \textbf{H1}, \textbf{H2} and \textbf{H3}.

The representation theorem of quadratic forms was instrumental in developing the perturbation theory of self-adjoint operators and
had enormous importance in Mathematical Physics, in particular in the development of Quantum Mechanics (see also Section~10 of \cite{Simon18} for a review of the central importance of quadratic forms in the analysis of self-adjointness of unbounded operators in non-relativistic Quantum Mechanics). Moreover,
due to the min-max principle, quadratic forms also play a fundamental role in the study of spectral properties
(like, e.g.,  existence of spectral gaps) for operators like Laplacians on manifolds as well as on
quantum and discrete graphs (see \cite{balmaseda19,balmaseda19b,lledo-post08a,lledo-post08b,fabila-lledo-post18,fabila-lledo19,post:16}).
It is remarkable that non-semibounded quadratic forms play an important role in the characterisation of significant operators
like the Laplace-Beltrami operator on Riemannian manifolds with boundary (and dimension $\geq 2$) due to the non-trivial
boundary term (see for instance  the recent results in \cite{Ib14}) or Dirac-like operators in any dimension \cite{Ib15, Perez17}.

Most classical representation theorems for quadratic forms require some analytical conditions to define a norm on the domain
which guarantees the existence of an unbounded self-adjoint operator representing it (see, e.g., \cite[\S~VIII.6]{reed-simon-1}
or \cite[\S~4.4]{davies:95}). The existence of a norm is typically guaranteed by the semiboundedness of
the quadratic form (or more generally assuming the quadratic form to be sectorial).
But if the quadratic form is not sectorial, in particular not semibounded, then one has to look for alternative analytic conditions in relation
with the domain of the quadratic form. There have been various approaches in the literature to treat non-semibounded quadratic forms starting with the classical
ones by McIntosh \cite{mcintosh2, mcintosh3}.
More recently, the articles \cite{dbella:17, corso:19} study representation theorems for sesquilinear forms which are $q$-closed and solvable.
This notion of $q$-closedness gives the domain of the sesquilinear form a Banach space structure and considers a Banach-Gelfand triple associated with it.
Also Arendt and Ter Elst study in \cite{arendt12} representation theorems for non-closable sectorial quadratic forms.
Moreover, non-semibounded quadratic forms also
appear naturally when describing generalised Friedrichs extensions of non-semibounded symmetric operators (see, e.g., \cite{mcintosh2,dsnoo:14}).
In this article we propose to remove the closedness condition as a primary requirement and generalise the notion of closable quadratic form
to the non-semibounded case. In particular, we will not require a priori the existence of a norm on the domain of the quadratic form.

In a very different setting, a remarkable activity has taken place around the problem of finding linear representations, and eventually integral representations, for continuous homogeneous polynomials on Banach lattices. The first result in this direction was obtained by Sundaresan \cite{Sun91} who obtained a representation theorem for polynomials on $\ell_{p}$ and on $L^{p}$.
Given a Banach lattice of functions on a measure space $(\A,\Sigma(\A))$ with measure $\mu$,  a natural model for a continuous homogeneous polynomial of degree $n$ is
\begin{equation}\label{integral_form}
P(f) = \int_\A f^n (x)\, \xi (x)\,  \d\mu (x)\, ,
\end{equation}
where $\xi$ is an integrable function.    Clearly such polynomials satisfy the orthogonal additivity property:
\begin{equation}\label{additive_orthogonality}
P(f + g) = P(f) + P(g) \, ,
\end{equation}
whenever the functions $f$ and $g$ have disjoint supports.
Benyamini, Llavona and Lasalle showed in \cite{benyamini06} that continuous orthogonally additive polynomials on Banach
lattices can be represented in the form \eqref{integral_form}. P{\'e}rez-Garc{\'\i}a and
Villanueva \cite{Perez Garcia-Villanueva 2005} solved independently the case of spaces of continuous
functions on compact spaces.  Carando, Lassalle and Zalduendo \cite{Carando-Lassalle-Zalduendo 2006}
found an independent proof for the case $C(K)$ and Ibort, Linares and Llavona \cite{Ibort-Linares-Llavona 2009}
gave another one for the case $\ell_{p}$. Such representation theorem can be extended to the more general situation
of Riesz spaces \cite{Jonge-Van Rooij 1970}. Following ideas started by Buskes and van Rooij
(see \cite{Buskes-Van Rooij 2004}), a similar representation theorem was obtained recently by Ibort, Linares and Llavona \cite{Ib12}.

In this article, we will concentrate on non-continuous non-semibounded quadratic polynomials because of its interest,
both conceptually and in applications, and we will extend the classical representation theorem by using an orthogonality property inspired in the additive orthogonality of homogeneous polynomials on Banach lattices discussed before. To implement orthogonal additivity
in our context we will assume that the underlying Hilbert space has the structure of a direct integral
(see Section~\ref{sec:QFandDI}). This assumption is quite natural if one aims to represent the quadratic form in terms of
(non-semibounded) self-adjoint operators because the spectral theorem guarantees
the existence of such direct integral decomposition (see, e.g., Section~7.3 in \cite{Hall13}).
The decomposition of the Hilbert space in terms of a direct integral and the notion of orthogonal additivity provides a natural and consistent
way in which to split a non-semibounded quadratic form into semibounded ones. This makes it possible to extend results  on semibounded forms
to non-semibounded ones.
In particular, this has applications in numerical analysis, where quadratic forms play a prominent role \cite{Ib13,Lop17}.

Another source of inspiration for
this article, in particular, to justify the direct integral decomposition of the Hilbert space is the theory of unitary
representation of groups (see, e.g., \cite{mackey:76}). Recall that using classical results in the theory of von
Neumann algebras one may decompose any unitary representation of
a separable locally compact group $G$ on a Hilbert space as a direct integral of primary representations.
In the context of compact groups this direct integral becomes simply a direct sum. Quadratic forms invariant under
a given unitary representation of a group $G$ were analysed in \cite{Ib14b}.
We will address in Subsection~\ref{subsec:groups} the question of representability of non-semibounded and $G$-invariant
quadratic forms.

The article is organised as follows: In the next section we introduce the necessary definitions and results in relation to
quadratic forms and direct integrals. We also generalise the notions of closability and representability to non-semibounded
quadratic forms. These concepts will be central for this article.
In Section~\ref{sect:orthogonal} we introduce, assuming that the underlying Hilbert space has a direct integral structure over
a measurable space $(\A,\Sigma(\A))$ with measure $\mu$, the notions of orthogonally additive and countably orthogonally additive quadratic
form. Under the assumption of closability of the quadratic form these notions turn out to be equivalent. We also interpret the
quadratic form as a measure on $\Sigma(\A)$ and its Radon-Nikodym derivative (with respect to $\mu$) as a quadratic form density.
In Section~\ref{sect:representation} we prove the main representation results for a class of non-semibounded Hermitian quadratic forms
satisfying certain conditions which we resume under \textbf{H1}, \textbf{H2} and \textbf{H3} (cf.\ Theorems~\ref{thm:mainthm1} and \ref{thm:mainthm2}).
For this family of quadratic forms the classical representation theorems do not apply in general.
This class contains quadratic forms densely defined on the direct integral Hilbert space,
which are closable, orthogonally additive and, for simplicity, we also assume that
the measure space underlying the direct integral is point supported. We base our result on
a consistent splitting procedure of the quadratic form into strongly representable quadratic form densities and the spectral theorem.
Finally, in Section~\ref{sec:examples} we apply our results to some of the examples mentioned above. It includes
the case of the position operator in Quantum Mechanics. This example is prototypical since the spectral
theorem guarantees that any self-adjoint operator is equivalent to a multiplication operator.
We conclude with the analysis of the representability of non-semibounded quadratic forms invariant under a unitary representation of a
group in Subsection~\ref{subsec:groups}. In the last section we briefly comment on some uniqueness issues around the representation theorem.

{\bf Acknowledgements:} We would like to thank the referees of this article for a careful reading of the manuscript and many useful remarks and suggestions. In particular, for suggesting a substantial and elegant simplification of the proof of Theorem~\ref{thm:mainthm1}.

\section{Quadratic forms and direct integrals}\label{sec:QFandDI}
We begin introducing basic definitions and results on quadratic forms and direct integrals that will be needed later.
Some standard references on quadratic forms and their relation to unbounded operators are,
e.g.,~\cite[Chapter~VI]{kato95}, \cite[Section~VIII.6]{reed-simon-1}, \cite[Section~4.4]{davies:95} or \cite[Chapter~10]{schmuedgen12}.
For the notion of direct integral of Hilbert spaces we refer to \cite[Part II, Chapter 1]{dixmier81} (see also Chapter~14 in \cite{Kadison-Ringrose-II}
or Section~7.3 in \cite{Hall13} and references cited therein).

\begin{definition}\label{def:qf}
Let $\D$ be a dense subspace in a complex separable Hilbert space $\H$. Denote by $Q\colon\mathcal{D}\times\mathcal{D}\to \mathbb{C}$
a sesquilinear form which is anti-linear in the first entry and linear in the second.
The sesquilinear form $Q$ is \textbf{Hermitian} if
\[
  Q(\Phi,\Psi)=\overline{Q(\Psi,\Phi)}\;,\quad \Phi,\Psi\in\mathcal{D}\;.
\]

Given a Hermitian sesquilinear form $Q$, the \textbf{quadratic form} associated with it (and also denoted by $Q$) on the
the same domain $\mathcal D$ is its evaluation on the diagonal, i.e., $Q\colon\D\to\R$ with $Q(\Phi):=Q(\Phi,\Phi)$ for
any $\Phi\in\mathcal{D}$. The quadratic form is \textbf{semibounded from below} if there is a constant $m \in \R$ such that
\[
  Q(\Phi)\geq m \norm{\Phi}^2\;,\; \Phi \in \mathcal{D}\;.
\]
We call $m$ the {\bf semibound} of $Q$.
If  $Q(\Phi)\geq 0$, $\Phi\in\mathcal{D}$, we say that $Q$ is \textbf{positive}. The quadratic form is
\textbf{semibounded from above} if $-Q$ is semibounded from below. We say that the quadratic form is \textbf{non-semibounded} if it
is not semibounded from below and from above.
\end{definition}

Note that a quadratic form $Q$ satisfies for any scalar $\lambda\in\CC$
\[
 Q(\lambda \Phi)=|\lambda|^2 Q(\Phi)\;,\quad \Phi\in\D\;,
\]
which implies $Q(0)=0$. Moreover, the values of the quadratic form already determine the values of the sesquilinear form
(cf.\ \cite[p.~49]{kato95}):
\begin{equation}\label{eq:quadratic-polarization}
 Q(\Phi,\Psi)=\frac{1}{4}\Big( Q(\Phi+\Psi) - Q(\Phi-\Psi) + iQ(\Phi+i\Psi) - iQ(\Phi-i\Psi)\Big)\;,\quad \Phi,\Psi\in\D\;.
\end{equation}

The definition we give next was stated by Kato in \cite[\S~1.4 of Chapter~VI]{kato95} as a characterisation of closability
in the context of sectorial quadratic forms. We will define closability as Kato but without assuming sectoriality and will show that it plays a central role when addressing orthogonal additivity and the representation theorem for non-semibounded quadratic forms.

\begin{definition}\label{def:closable}
A densely defined, Hermitian quadratic form $Q$ on a dense domain $\D\subset\H$ is \textbf{closable} if for any $\{\Phi_n\}_{n\in\mathbb{N}}\subset\D$ such that $\lim_{n\to\infty}\Phi_n=0$ and $\lim_{n, m\to\infty}Q(\Phi_n-\Phi_m)=0$ one has that $\lim_{n\to\infty}Q(\Phi_n)=0$\,.
\end{definition}

For semibounded quadratic forms one can define the notion of closed quadratic form and give a characterisation
in terms of the norm defined below, see \cite[Chapter~VI \S~1.3 Theorem~1.11]{kato95}.

\begin{definition}\label{def:graph-norm}
Let $Q$ be a Hermitian quadratic form with dense domain $\D$. If $Q$ is semibounded from below with semibound $m\in\R$ we define the \textbf{graph norm} of the quadratic form $Q$ by
$$\normm{\Phi}^2_Q=(1-m)\norm{\Phi}^2+Q(\Phi)\;, \Phi\in\D.$$
If $Q$ is semibounded from above with semibound $m\in\R$ we define the \textbf{graph norm} of the quadratic form $Q$ by
$$\normm{\Phi}^2_Q=(1+m)\norm{\Phi}^2-Q(\Phi)\;, \Phi\in\D.$$
\end{definition}

\begin{definition}\label{def:extension}
    Let $Q$ be a quadratic form densely defined on $\D\subset\H.$ We will say that $\widehat{Q}$ is an \textbf{extension} of $Q$ on the dense domain
    $\widehat{\D} \supset \D$ if $\widehat{\D} \subset \H$ and $\widehat{Q}|_{\D} = Q$.
\end{definition}

It can be proven that closable sectorial forms, and therefore closable semibounded quadratic forms, admit a smallest closed extension \cite[Chapter~VI, \S~1.4, Theorem~1.17]{kato95}. This extension is defined in a canonical way extending by continuity with respect to the graph norm. Without the semiboundedness assumption there is not a clear connection between a closable quadratic form and a closed extension of it, hence the notion of closed quadratic form does not play an important role in our context. The following theorems summarise these results including classical representation theorems on semibounded quadratic forms \cite[Chapter~VI, \S~2.1, Theorems~2.1 and 2.6]{kato95}.

\begin{theorem}\label{thm:representation}
    Let $Q$ be a semibounded, closable, Hermitian quadratic form densely defined on \hbox{$\D\subset\H$}. Then there is a minimal extension $\overline{Q}$ of $Q$ with domain $\overline{\D} := \overline{\D}^{{\normm{\cdot}_Q}}$.
\end{theorem}

\begin{definition}\label{def:core}
Let $(Q,\D)$ be a closable, semibounded, Hermitian quadratic form densely defined on $\D$ and denote by $(\overline{Q},\overline{\D})$ the minimal extension of the preceding theorem.
We will say that $\D_0\subset\overline{\D}$ is a \textbf{core} for $\overline{Q}$ (or just a core for $Q$) if $\overline{\D}_0^{{\normm{\cdot}_{\overline{Q}}}} = \overline{\D}$.
\end{definition}

\begin{theorem}\label{thm:representation-2}
    Let $Q$ be a semibounded, closable, Hermitian quadratic form densely defined on \hbox{$\D\subset\H$} and denote by $(\overline{Q},\overline{\D})$ the minimal extension of the preceding theorem.
    Then there is a unique self-adjoint operator $T$ with domain $\D(T)\subset \overline{\D}$ such that $\D(T)$ is a core for $Q$ and such that for all
    $\Phi \in \overline{\D}$ and $\Psi\in\D(T)$ one has that
    $$\overline{Q}(\Phi,\Psi) = \scalar{\Phi}{ T\Psi}.$$
\end{theorem}

To any self-adjoint operator one can associate a unique resolution of the identity $E(\cdot)$ (cf.\ \cite[Chapter~VI, Section 66, Theorems~1 and 2]{akhiezer93}).
For every $\Phi\in\H$ this resolution of the identity defines a real measure on the Borel $\sigma$-algebra of $\R$ by
    $$\nu_{\Phi}(\sigma) = \norm{E(\sigma)\Phi}^2,\quad \sigma \in \mathrm{Borel}(\R).$$
The domain of the self-adjoint operator $T$ can be characterised in terms of this family of measures as follows:
    \begin{equation}\label{eq:DT}
    \D(T) = \left\{ \Phi \in \H \;\Bigr|\;  \int_{\R} \lambda^2 \d\nu_{\Phi}(\lambda) < \infty\right\}\;.
    \end{equation}

Notice also that as a consequence of the spectral theorem one has that a form like in Theorem~\ref{thm:representation-2} can be represented using the measure $\nu_{\Phi}(\cdot)$ for $\Phi\in\D(T)$ as
    \begin{equation}\label{eq:intformulaQ}
        \overline{Q}(\Phi) = \int_\R\lambda \d\nu_\Phi(\lambda).
    \end{equation}

This result as well as in \cite[Proposition~2.2]{mcintosh3} suggests a new notion of representability. Before introducing it we will define a convenient norm.

\begin{definition}\label{def:spectralnorm}
    Let $E(\cdot)$ be the unique resolution of the identity associated to a self-adjoint operator $T$. We define the \textbf{spectral norm} given by
    $$\normm{\Phi}_E^2 : = \int_\R(1+|\lambda|)\d\nu_\Phi(\lambda),\quad\Phi\in\D(|T|^{1/2}).$$
\end{definition}

\begin{definition}\label{def:strongrepresentability}
    Let $(Q,\D)$ be a Hermitian quadratic form and let $T$ be a self-adjoint operator with domain $\D(T)$ and $E(\cdot)$ the corresponding resolution of the identity. We will say that $Q$ is \textbf{strongly representable} by $T$ if
    \begin{enumerate}
        \item There exists an extension $(\bQ,\bD)$ of $(Q,\D)$ such that $\D(T)\subset\bD$ and $\D \subset \D(|T|^{1/2})$.
        \item $\bD = \overline{\D}^{\normm{\cdot}_E}$\rule{0pt}{1em}.
        \item $\bQ(\Phi) = \int_\R\lambda \d\nu_\Phi(\lambda)$ for all $\Phi \in \bD$\rule{0pt}{1em}.
    \end{enumerate}
\end{definition}

The following proposition is a direct consequence of Definition~\ref{def:strongrepresentability} and the Spectral Theorem.

\begin{proposition}\label{prop:D(T)denseDhat}
Let $Q$ be a strongly representable quadratic form with domain $\D$ and representing self-adjoint operator $T$ with resolution of the identity $E(\cdot)$.
Then $\bD\subset\D(|T|^{1/2})$ and $\bQ(\Phi) = \scalar{|T|^{1/2}\Phi}{(\operatorname{sgn}T)|T|^{1/2}\Phi}$ for $\Phi\in\bD$,  where
$\operatorname{sgn}T := E\left((0,\infty)\right)-E\left((-\infty,0)\right)$.
\end{proposition}

We will show next that closability is a necessary condition for any quadratic form defined via a symmetric operator.

\begin{theorem}\label{thm:reprimpliesclosable}
Let $T$ be a self-adjoint operator on the dense domain $\D(T)\subset\H$ and define the Hermitian quadratic form
$Q(\Phi):=\langle \Phi,T\Phi\rangle$, $\Phi\in\D:=\D(T)$. Then $Q$ is closable.
\end{theorem}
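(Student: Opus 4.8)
The plan is to verify the two assertions separately, both being essentially immediate from the spectral theorem applied to $T$.

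\medskip

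\textbf{Weak representability.} This is the easy direction and is almost tautological given Definition~\ref{def:weakrepresentable}. I would simply take the operator $T$ itself as the representing operator, and I need to exhibit an extension $\bQ$ on a domain $\bD \supset \D(T)$ that restricts to $Q$ on $\D(T)$ and satisfies $\bQ(\Phi,\Psi) = \scalar{\Phi}{T\Psi}$ for $\Phi,\Psi \in \D(T)$. Here one can take $\bD := \D = \D(T)$ and $\bQ := Q$ itself: Definition~\ref{def:extension} allows $\bD = \D$, and the required identity holds by the very definition $Q(\Phi) := \scalar{\Phi}{T\Phi}$ together with the polarization identity \eqref{eq:quadratic-polarization} (or just because $Q$ was defined as the sesquilinear form $\scalar{\Phi}{T\Psi}$ restricted to the diagonal). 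So no work is needed here beyond unwinding definitions.

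\medskip

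\textbf{Closability.} This is the substantive part. Let $\{\Phi_n\} \subset \D(T)$ with $\Phi_n \to 0$ in $\H$ and $Q(\Phi_n - \Phi_m) \to 0$ as $n,m \to \infty$; I must show $Q(\Phi_n) \to 0$. Write $\nu_{\Phi}(\sigma) = \norm{E(\sigma)\Phi}^2$ as in the excerpt. The Cauchy condition reads $\int_\R \lambda\, \d\nu_{\Phi_n - \Phi_m}(\lambda) \to 0$. The key idea is to split the integral at a large threshold $R$. On the compact part $\{|\lambda| \le R\}$ one controls the spectral integral by $R$ times the $\H$-norm squared, which goes to zero since $\Phi_n \to 0$. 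The tail $\{|\lambda| > R\}$ must be handled using the Cauchy condition: the point is that $Q(\Phi_n - \Phi_m) \to 0$ forces the tails $\int_{|\lambda| > R} \lambda\, \d\nu_{\Phi_n}(\lambda)$ to be uniformly small for $n$ large, because on the tail the integrand $\lambda$ does not change sign region-by-region. Concretely, fix $\varepsilon > 0$, choose $N$ so that $|Q(\Phi_n - \Phi_m)| < \varepsilon$ for $n,m \ge N$, and use the lower-semicontinuity/Fatou-type argument: for fixed $n \ge N$, letting $m \to \infty$ and using $\Phi_m \to 0$ (hence $\nu_{\Phi_m} \to 0$ weakly on compacts), one extracts that the tail contribution of $\nu_{\Phi_n}$ is $\lesssim \varepsilon$ uniformly in $n \ge N$. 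Then $|Q(\Phi_n)| \le |\int_{|\lambda|\le R}\lambda\,\d\nu_{\Phi_n}| + |\int_{|\lambda|>R}\lambda\,\d\nu_{\Phi_n}| \le R\norm{\Phi_n}^2 + C\varepsilon$, and letting $n \to \infty$ then $\varepsilon \to 0$ finishes the argument.

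\medskip

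The main obstacle is making the tail estimate rigorous, since $\lambda$ is unbounded and can have either sign, so one cannot naively bound $|\int \lambda\, \d\nu|$ by $\int |\lambda|\, \d\nu$ without losing control. The clean way is to separate $\R$ into the positive and negative spectral parts, $E((0,\infty))$ and $E((-\infty,0))$, so that on each part the measure-valued map $\Phi \mapsto \int \lambda\, \d\nu_\Phi$ restricted there is a genuine semibounded (positive or negative) quadratic form, namely $\scalar{\Phi}{T E((0,\infty))\Phi}$ and $\scalar{\Phi}{T E((-\infty,0))\Phi}$; each of these is an honest positive/negative closable form and one can invoke the standard semibounded theory (or redo the elementary split-at-$R$ argument) on each piece, then add. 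Alternatively — and perhaps most cleanly for the write-up — one decomposes $T = T_+ - T_-$ via the positive and negative parts of its spectral resolution and reduces to the semibounded case covered by Proposition~\ref{prop:graph-norm}, observing that the Cauchy condition for $Q$ splits into Cauchy conditions for $Q_+$ and $Q_-$ (this uses that $\nu_{\Phi_n-\Phi_m}$ restricted to $(0,\infty)$ and to $(-\infty,0)$ are separately controlled, which follows from the fact that both are nonnegative on their respective half-lines so their sum tending to $0$ forces each to tend to $0$).
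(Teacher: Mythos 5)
Your treatment of weak representability is fine and coincides with the paper's (take $\bQ=Q$, $\bD=\D$, and unwind the definitions). The gap is in the closability argument, and it is located exactly at the step you flag as the "clean way": the claim that $Q(\Phi_n-\Phi_m)\to 0$ splits into separate Cauchy conditions for the positive and negative spectral parts because "their sum tending to $0$ forces each to tend to $0$". Writing $Q_\pm(\Psi)=\int_{(0,\infty)}\lambda\,\d\nu_\Psi(\lambda)$ and $\int_{(-\infty,0)}|\lambda|\,\d\nu_\Psi(\lambda)$, one has $Q=Q_+-Q_-$, a \emph{difference} of nonnegative forms, and a difference can vanish by cancellation while both terms stay large. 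This is not a cosmetic issue; it fails already for the position operator on $L^2(\R)$: take $\Phi_n=n^{-1}\bigl(\chi_{[n^2,\,n^2+1]}+\chi_{[-n^2-1,\,-n^2]}\bigr)$. Then $\Phi_n\to 0$, the supports are pairwise disjoint so $Q(\Phi_n-\Phi_m)=Q(\Phi_n)+Q(\Phi_m)=0$ for all $n\neq m$, and yet $Q_+(\Phi_n-\Phi_m)\geq 2$. So the reduction to the semibounded theory via $T=T_+-T_-$ cannot work; indeed, if the positive and negative parts could be controlled separately from $Q$ alone, most of the machinery of this paper (orthogonal additivity, hypothesis {\bf H3}) would be unnecessary. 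The earlier split-at-$R$ argument has a related circularity: for fixed $R$ the compact part $\int_{|\lambda|\le R}\lambda\,\d\nu_{\Phi_n}(\lambda)$ tends to $0$ simply because $\Phi_n\to 0$, so asserting that the tail $\int_{|\lambda|>R}\lambda\,\d\nu_{\Phi_n}(\lambda)$ is uniformly small is essentially equivalent to the conclusion $Q(\Phi_n)\to 0$, and the proposed "Fatou-type" passage $m\to\infty$ does not produce it, because after the cross term dies one is left with the identity $Q(\Phi_n-\Phi_m)=Q(\Phi_n)+Q(\Phi_m)-2\,\mathrm{Re}\scalar{\Phi_m}{T\Phi_n}$, which still contains the uncontrolled term $Q(\Phi_m)$.

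That identity is, however, exactly where the paper's proof starts, and the missing ingredient is how to exploit it. Fixing $n$ and letting $m\to\infty$, the cross term $\scalar{\Phi_m}{T\Phi_n}$ vanishes because $T\Phi_n$ is a \emph{fixed} vector and $\Phi_m\to 0$; combined with the Cauchy hypothesis this shows that $|Q(\Phi_n)+Q(\Phi_m)|$ can be made arbitrarily small for suitable large $n,m$. One then concludes by a sign argument: if $|Q(\Phi_{n_k})|\geq\epsilon_0$ along a subsequence, one can choose two indices along which $Q$ has the same sign, so that $|Q(\Phi_n)+Q(\Phi_m)|\geq\epsilon_0$, a contradiction. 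Note that this argument never invokes the spectral theorem, which is why it yields Corollary~\ref{cor:symmetric} for merely symmetric operators — something a spectral-decomposition route could not give. To repair your write-up you should replace the $Q_\pm$ splitting by this cross-term-plus-sign argument (or supply an independent, and in general unavailable, reason why the spectral tails of $\nu_{\Phi_n}$ are uniformly controlled).
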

\begin{proof}
We need
to show that given $\{\Phi_n\} \subset \D(T)$ such that $\lim_{n\to\infty} \Phi_n = 0$ and $\lim_{n, m\to\infty}Q(\Phi_n - \Phi_m) = 0$ then one has that $\lim_{n\to\infty}Q(\Phi_n) = 0$. Notice that we cannot use the closedness of the self-adjoint operator $T$ to prove the result since
$\lim_{n\to\infty}T\Phi_n$ might not exist. Indeed, it would exist (and be zero) if $\lim_{n, m \to \infty}\norm{T(\Phi_n - \Phi_m)} = 0$,
but our conditions are weaker.

Suppose that $Q(\Phi_n)$ does not converge to zero. Then there exist $\epsilon_0>0$ such that for all $N\in\mathbb{N}$ there exist $n_0 = n_0(N,\epsilon_0)>N$
with $|Q(\Phi_{n_0})|>\epsilon_0$. Using the additivity of the sesquilinear form on both entries we have
\[
 Q(\Phi_n) = Q(\Phi_n-\Phi_m)-Q(\Phi_m) + Q(\Phi_n,\Phi_m)+Q(\Phi_m,\Phi_n)\;,
\]
hence,

\begin{equation}
    Q(\Phi_n) + Q(\Phi_m) = Q(\Phi_n-\Phi_m) + 2\;\mathrm{Re}\scalar{\Phi_m}{T\Phi_n}.
\end{equation}
By assumption we have that for all $\epsilon_1 >0$ there exists $N_1$ such that for all $n, m>N_1$ we have that $|Q(\Phi_n-\Phi_m)|<\epsilon_1$. Choose $\epsilon_1 = \epsilon_0/2$ and $N = N_1$ such that $|Q(\Phi_{n_0(N_1,\epsilon_0)})|>\epsilon_0.$ We have now that
\begin{equation}
    |Q(\Phi_m) + Q(\phi_{n_0})| \leq |Q(\Phi_m-\Phi_{n_0})| + 2|\scalar{\Phi_m}{T\Phi_{n_0}}|
\end{equation}
Since $\Phi_n \to 0$ we can choose $m_0 > N_1$ such that $$2|\scalar{\Phi_{m_0}}{T\Phi_{n_0}}| \leq 2\norm{\Phi_{m_0}}\norm{T(\Phi_{n_0})} \leq \epsilon_0/2.$$
Hence
\begin{equation}\label{eq:proofclosability}
        |Q(\Phi_{m_0}) + Q(\Phi_{n_0})| < \epsilon_0.
\end{equation}
The sequence $\{ Q(\Phi_n) \}$ may alternate signs.
Suppose that $Q(\Phi_{n_0}) > 0$. Then $m_0$ can be chosen such that $Q(\Phi_{m_0})>0$. If this were not the case, this would mean that there exists some $N_2$ such that the series does not alternate sign for $n>N_2$. Then, it suffices to choose $N_1> N_2$. Now we have that Eq.~(\ref{eq:proofclosability}) implies that $|Q(\Phi_{n_0})| < \epsilon_0$, a contradiction. One can proceed analogously if $Q(\Phi_{n_0}) < 0$.
\end{proof}

The same proof of the preceding proposition also shows the following result in relation with symmetric operators.
\begin{corollary}\label{cor:symmetric}
Let $T$ be a symmetric operator on the dense domain $\D(T)\subset\H$. Define the Hermitian quadratic form
$Q(\Phi):=\langle \Phi,T\Phi\rangle$, $\Phi\in\D:=\D(T)$. Then $Q$ is closable.
\end{corollary}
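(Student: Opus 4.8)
The plan is to observe that the argument proving Theorem~\ref{thm:reprimpliesclosable} never actually used self-adjointness of $T$ — only that $T$ is densely defined and symmetric — so it applies here essentially verbatim. First I would check that $Q(\Phi,\Psi):=\scalar{\Phi}{T\Psi}$ is a genuine Hermitean sesquilinear form on $\D(T)\times\D(T)$: symmetry of $T$ gives $\scalar{\Phi}{T\Psi}=\scalar{T\Phi}{\Psi}=\overline{\scalar{\Psi}{T\Phi}}$, hence $Q(\Phi,\Psi)=\overline{Q(\Psi,\Phi)}$ and the associated quadratic form $Q(\Phi)=\scalar{\Phi}{T\Phi}$ is real-valued. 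Thus the notion of closability in Definition~\ref{def:closable} makes sense for it.

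Next I would take a sequence $\{\Phi_n\}\subset\D(T)$ with $\Phi_n\to 0$ and $Q(\Phi_n-\Phi_m)\to 0$ as $n,m\to\infty$, and run the same reasoning as before. The algebraic identity
\[
 Q(\Phi_n)+Q(\Phi_m)=Q(\Phi_n-\Phi_m)+2\,\mathrm{Re}\scalar{\Phi_m}{T\Phi_n}
\]
follows from bilinearity together with the Hermitean property just established, with no appeal to self-adjointness. The Cauchy--Schwarz estimate $|\scalar{\Phi_m}{T\Phi_n}|\le\norm{\Phi_m}\,\norm{T\Phi_n}$ holds for an arbitrary operator, and combined with $\Phi_m\to 0$ it makes the cross term as small as desired for suitably chosen indices. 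The sign-alternation bookkeeping on the real sequence $\{Q(\Phi_n)\}$ is then carried out word for word, producing the contradiction with the assumption $Q(\Phi_n)\not\to 0$.

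The only point worth double-checking — and the reason the corollary is not entirely trivial to state — is the remark in the proof of Theorem~\ref{thm:reprimpliesclosable} that one cannot use closedness of $T$ because $\lim_n T\Phi_n$ need not exist; this is equally (indeed more obviously) true for a symmetric, possibly non-closed and non-self-adjoint $T$, and the argument is designed precisely to bypass convergence of $T\Phi_n$. I therefore expect no real obstacle: once Hermiticity of $Q$ is in place the entire proof transfers, and the corollary records that self-adjointness was never needed for closability, only symmetry.
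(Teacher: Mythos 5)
Your proposal is correct and coincides with the paper's own treatment: the paper proves this corollary simply by remarking that the proof of Theorem~\ref{thm:reprimpliesclosable} applies verbatim, since self-adjointness is never used there, only symmetry (to make $Q$ Hermitean) and the Cauchy--Schwarz bound on the cross term $\scalar{\Phi_m}{T\Phi_n}$. Your additional verification that Hermiticity of $Q$ follows from symmetry of $T$ is exactly the point that makes the transfer legitimate.
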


Given a semibounded, Hermitian quadratic form we can consider the spectral norm of the unique self-adjoint operator associated to its smallest closed extension, cf.\ Theorem~\ref{thm:representation-2}. Such a norm is convenient because it does not need the use of the semibound explicitly in its definition. 

\begin{proposition}\label{pro:equivalentnorms}
Let $Q$ be a Hermitian quadratic form on a dense domain $\D$ which is closed and
semibounded (either from above or from below) with semibound $m\in\R$.
Let $E(\cdot)$ be the resolution of the identity associated to the unique self-adjoint operator representing the quadratic form.
Then the norms $\normm{\cdot}_E$ and $\normm{\cdot}_Q$ are equivalent.
\end{proposition}

\begin{proof}
The statement follows immediately for semibounded forms from the Closed Graph Theorem and Kato's Second Representation Theorem \cite[Theorem VI.2.23]{kato95}
\end{proof}

We prove now the following consistency result showing that the representation theorem for semibounded quadratic forms
given in Theorem~\ref{thm:representation-2}, implies strong representability.

\begin{corollary}\label{cor:Katorepisstrongrep}
Let $Q$ be a closable, semibounded, Hermitian quadratic form densely defined on $\D$. Then $Q$ is strongly representable by the unique operator $T$ determined by the closure $(\overline{Q},\overline{\mathcal{D}})$.
\end{corollary}

\begin{proof}
Take $(\overline{Q},\overline{\D})$ and $T$ as in Theorem~\ref{thm:representation-2} and let $E(\cdot)$ be the associated resolution of the identity. Define
$(\bQ,\bD):=(\overline{Q},\overline{\D})$ and, by
the equivalence of norms stated in Proposition~\ref{pro:equivalentnorms}, we conclude that
$\D\subset\overline{\D}^{\normm{\cdot}_E}=\overline{\D}^{\normm{\cdot}_Q}=\D(|T|^{1/2})$.
Finally, since $\bQ(\Phi) = \scalar{\Phi}{T\Phi}$ for all $\Phi\in\D(T)$ and since $\D(T)$ is a core for
$Q$, the integral representation in Eq.~\eqref{eq:intformulaQ} can be extended by continuity to all $\Phi\in\overline{\D}^{\normm{\cdot}_E}$.
\end{proof}


We complete this section describing some basic definitions and results on direct integrals.
Let $(\A,\Sigma(\A),\mu)$ be a measure space with $\mu$ a $\sigma$-finite, positive measure. Let $\{\H^\alpha \mid  \alpha\in\A \}$ be a family of complex
separable Hilbert spaces and we denote for any $\alpha\in\A$ by
$\scalar{\cdot}{\cdot}_\alpha$ and $\norm{\cdot}_\alpha$ the scalar product and the norm of the Hilbert space $\H^\alpha$,
respectively. An element $x$ of $\prod_{\alpha\in\A} \H^\alpha$ is a mapping $\alpha\mapsto x(\alpha)$
with the property that for each $\alpha\in\A$ one has $x(\alpha)\in\H^\alpha$. Such a mapping is called a vector field over $\A$.

\begin{definition}
The space $\mathcal{F}\subset\prod_{\alpha\in\A} \H^\alpha$  is called a \textbf{measurable field of Hilbert spaces with respect to $\mu$}
if it is a complex vector space such that:
\begin{enumerate}
  \item For all $x \in \mathcal{F}$, the function $\alpha \mapsto \norm{x(\alpha)}_\alpha$,  is $\mu$-measurable.
  \item If $z$ is a vector field over $\mathcal{A}$ such that for all $x\in\mathcal{F}$ the function $\alpha \mapsto \scalar{x(\alpha)}{z(\alpha)}_\alpha$,
    is $\mu$-measurable, then $z\in\mathcal{F}$\,.
  \item There is a countable set of vector fields
    $\{ x_1,x_2,\dots \}$ in $\mathcal{F}$ such that for all $\alpha\in\A$ the linear span of $\{x_1(\alpha),x_2(\alpha),\dots\}$ is dense in $\H^\alpha$\,. Note that this condition requires separability of the $\H^\alpha$ explicitly.
\end{enumerate}
\end{definition}

We define the following semi-norm on the space $\mathcal{F}$.
\begin{equation}\label{eq:normdirectintegral}
	\norm{x}^2:=\int_{\A}\norm{x(\alpha)}^2_\alpha\d\mu(\alpha)\;.
\end{equation}

\begin{definition}
We call the set $\{x\in\mathcal{F}\mid  \norm{x}<\infty\}$ the space of \textbf{square integrable sections} of the measurable field of Hilbert spaces.
The quotient of this space under the equivalence class $x\sim y$ defined by $\norm{x-y}=0$ is called the \textbf{direct integral}
of the measurable field of Hilbert spaces with respect to $\mu$ and is denoted by
$$\int^\oplus_\A\H^\alpha\d\mu(\alpha)\;.$$
\end{definition}

The following theorem summarises the preceding construction, cf.\ \cite[Part II, Chapter 1]{dixmier81}.

\begin{theorem}\label{thm:directintegral}
Let $(\A,\Sigma(\A),\mu)$ be a measure space with $\mu$ a $\sigma$-finite, positive measure. Let $\{\H^\alpha \mid  \alpha\in\A \}$ be a family of complex separable Hilbert spaces and let $\mathcal{F}$ be a measurable field of Hilbert spaces with respect to $\mu$. The direct integral
$$\H=\int^\oplus_\A\H^\alpha\d\mu(\alpha)$$
defines a complex Hilbert space with norm
$$\norm{x}=\left(\int_\A\norm{x(\alpha)}_\alpha^2\d\mu(\alpha)\right)^{1/2}$$
and associated scalar product
$$\scalar{x}{y}=\int_\A\scalar{x(\alpha)}{y(\alpha)}_\alpha\d\mu(\alpha)\;.$$
\end{theorem}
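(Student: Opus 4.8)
The plan is to verify, in this order, that (a) the right-hand side of the scalar-product formula is well defined on square-integrable sections and descends to a genuine inner product on the quotient space $\H$, (b) $\H$ is complete, and (c) $\H$ is separable. The one genuinely non-trivial input, which I would take from \cite{dixmier81}, is the measurability lemma: if $x,y\in\mathcal{F}$ then $\alpha\mapsto\scalar{x(\alpha)}{y(\alpha)}_\alpha$ is $\mu$-measurable. (If one prefers to reprove it from the axioms above, one applies a measurable Gram--Schmidt procedure to the fundamental sequence $\{x_n\}$ of axiom~(3) to obtain sections whose values form, fibrewise, an orthonormal basis of $\H^\alpha$ with measurable coefficients, and expands; the facts that $\mathcal{F}$ is a complex vector space and that membership in $\mathcal{F}$ is unaffected by modification on a $\mu$-null set both then follow from axiom~(2).) Granting this, the square-integrable sections form a complex subspace of $\mathcal{F}$ by the fibrewise bound $\norm{x(\alpha)+y(\alpha)}_\alpha^2\le 2\norm{x(\alpha)}_\alpha^2+2\norm{y(\alpha)}_\alpha^2$; the integrand $\alpha\mapsto\scalar{x(\alpha)}{y(\alpha)}_\alpha$ is measurable by the lemma and integrable because $|\scalar{x(\alpha)}{y(\alpha)}_\alpha|\le\tfrac12(\norm{x(\alpha)}_\alpha^2+\norm{y(\alpha)}_\alpha^2)$; and the resulting form is sesquilinear, Hermitean and positive by the corresponding fibrewise properties. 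Finally $\scalar{x}{x}=0$ forces $\norm{x(\alpha)}_\alpha=0$ for $\mu$-a.e.\ $\alpha$, i.e.\ $x\sim 0$, so the form is definite on $\H$ and $\norm{\cdot}$ in \eqref{eq:normdirectintegral} is exactly the associated norm.

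For completeness I would transcribe the classical $L^2$ argument. Given a Cauchy sequence in $\H$, pass to a subsequence of representatives $(x_{n_k})\subset\mathcal{F}$ with $\norm{x_{n_{k+1}}-x_{n_k}}\le 2^{-k}$ and set $g(\alpha):=\sum_{k\ge 1}\norm{x_{n_{k+1}}(\alpha)-x_{n_k}(\alpha)}_\alpha$. By the triangle inequality in $L^2(\A,\mu)$ and monotone convergence one gets $\int_\A g(\alpha)^2\,\d\mu(\alpha)\le 1$, so $g(\alpha)<\infty$ for $\mu$-a.e.\ $\alpha$; for each such $\alpha$ the series $x_{n_1}(\alpha)+\sum_{k\ge1}(x_{n_{k+1}}(\alpha)-x_{n_k}(\alpha))$ converges absolutely in the complete space $\H^\alpha$, and I define $x(\alpha)$ to be its sum (and $x(\alpha):=0$ on the exceptional null set). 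Then $x\in\mathcal{F}$: for every $z\in\mathcal{F}$ one has $\scalar{z(\alpha)}{x(\alpha)}_\alpha=\lim_k\scalar{z(\alpha)}{x_{n_k}(\alpha)}_\alpha$ for $\mu$-a.e.\ $\alpha$, a pointwise a.e.\ limit of functions measurable by the lemma, hence measurable, so $x\in\mathcal{F}$ by axiom~(2) and the null-set stability of $\mathcal{F}$. The tail estimate $\norm{x(\alpha)-x_{n_k}(\alpha)}_\alpha\le\sum_{j\ge k}\norm{x_{n_{j+1}}(\alpha)-x_{n_j}(\alpha)}_\alpha\le g(\alpha)$ and dominated convergence with dominating function $g^2\in L^1(\A,\mu)$ give $\norm{x-x_{n_k}}\to 0$; in particular $x\in\H$, and a Cauchy sequence with a convergent subsequence converges, so $\H$ is complete.

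For separability I would again follow \cite[Part~II, Ch.~1]{dixmier81}. Using $\sigma$-finiteness of $\mu$, fix a countable algebra $\mathcal{R}\subset\Sigma(\A)$ of sets of finite measure generating $\Sigma(\A)$ up to null sets, and consider the countable family of sections $\alpha\mapsto\sum_j c_j\,\chi_{B_j}(\alpha)\,x_{n_j}(\alpha)$ with $B_j\in\mathcal{R}$ and $c_j\in\mathbb{Q}+i\mathbb{Q}$. Its density in $\H$ follows from axiom~(3) (the $\{x_n(\alpha)\}$ are fibrewise total), density of simple functions in $L^2(\A,\mu)$, and a dominated-convergence approximation argument; I would cite the details from \cite{dixmier81} rather than reproduce them.

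I expect the main obstacle to be the bookkeeping between measurability (membership in $\mathcal{F}$) and the $\mu$-a.e.\ defined limiting constructions: one must repeatedly use that membership in $\mathcal{F}$ is tested by measurability of fibrewise inner products against $\mathcal{F}$ and that this test is unaffected by changes on null sets, and in the completeness proof one must certify that the a.e.\ limit $x$ lands back in $\mathcal{F}$ before one can even speak of $\norm{x-x_{n_k}}$. The purely analytic content---Cauchy--Schwarz, Minkowski, monotone and dominated convergence---is routine once the measurable-field formalism is in place, which is why the bulk of the work can be deferred to \cite{dixmier81}.
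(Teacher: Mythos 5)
The paper offers no proof of this theorem: it is stated as a summary of the preceding construction and referred wholesale to Dixmier [Part II, Ch.~1]. Your outline correctly reproduces that standard argument (measurability of fibrewise inner products, the Riesz--Fischer completeness argument, and separability from $\sigma$-finiteness plus the fundamental sequence), so it is consistent with the paper's approach of deferring to the classical reference; the only remark worth making is that the measurability lemma follows most directly from polarization and axiom~(1) applied to $x+i^k y$, without needing the measurable Gram--Schmidt machinery you mention as an alternative.
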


From these definitions some important consequences follow.

\begin{proposition}\label{prop:projections}
Let $(\A,\Sigma(\A),\mu)$ be a measure space with $\mu$ a $\sigma$-finite, positive measure.
Let $\{\H^\alpha \mid  \alpha\in\A \}$ be a $\mu$-measurable field of Hilbert spaces.
For any $\Delta\in\Sigma(\A)$ consider the corresponding measure subspace. Then
$\{\H^\alpha\mid \alpha\in\Delta \}$ is a measurable field of Hilbert spaces with respect to $\mu$
and define the direct integral over $\Delta$ by
$$\H_\Delta:=\int^\oplus_\Delta\H^\alpha\d\mu(\alpha)\;.$$
Then $\H_\Delta$ is a closed subspace of $\H$. We define $P_\Delta$ to be the orthogonal projection onto $\H_\Delta$.
Moreover, if $\mu(\Delta)=0$, then $\H_\Delta=\{0\}$.
\end{proposition}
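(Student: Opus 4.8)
The plan is to prove the three assertions of Proposition~\ref{prop:projections} in order: that $\{\H^\alpha\mid\alpha\in\Delta\}$ is a measurable field over $(\Delta,\Sigma(\A)\cap\Delta,\mu)$, that $\H_\Delta$ embeds isometrically as a closed subspace of $\H$, and that $\mu(\Delta)=0$ forces $\H_\Delta=\{0\}$. The third is immediate from the defining semi-norm \eqref{eq:normdirectintegral}: if $\mu(\Delta)=0$ then $\norm{x}^2=\int_\Delta\norm{x(\alpha)}_\alpha^2\d\mu(\alpha)=0$ for every section, so every element of $\H_\Delta$ is the zero class. The first assertion is a matter of restricting the distinguished vector field $\mathcal{F}$: define $\mathcal{F}_\Delta$ to be the set of sections over $\Delta$ obtained by restricting members of $\mathcal{F}$ (equivalently, sections $z$ over $\Delta$ such that $\alpha\mapsto\scalar{x(\alpha)}{z(\alpha)}_\alpha$ is $\mu$-measurable on $\Delta$ for all $x\in\mathcal{F}$), and check the three axioms of a measurable field. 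Measurability of $\alpha\mapsto\norm{x(\alpha)}_\alpha$ on $\Delta$ follows from measurability on $\A$; the maximality axiom (ii) is built into the definition of $\mathcal{F}_\Delta$; and the separability axiom (iii) is inherited by restricting the countable family $\{x_1,x_2,\dots\}$, since density of the span in $\H^\alpha$ is a pointwise statement for each $\alpha\in\Delta$.

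For the second and main assertion, I would realise $\H_\Delta$ explicitly inside $\H$ via the map sending (the class of) a square-integrable section $x$ over $\Delta$ to (the class of) the section over $\A$ that equals $x(\alpha)$ for $\alpha\in\Delta$ and $0\in\H^\alpha$ for $\alpha\in\A\setminus\Delta$. One must check that this extension-by-zero is again a measurable section in the sense of $\mathcal{F}$: this uses axiom (ii) for $\mathcal{F}$ on $\A$, noting that for any $y\in\mathcal{F}$ the function $\alpha\mapsto\scalar{y(\alpha)}{\widetilde x(\alpha)}_\alpha$ equals $\scalar{y(\alpha)}{x(\alpha)}_\alpha$ on $\Delta$ and $0$ off $\Delta$, hence is $\mu$-measurable since $\Delta\in\Sigma(\A)$ and $x\in\mathcal{F}_\Delta$. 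The resulting map is linear, well-defined on equivalence classes, and norm-preserving because $\norm{\widetilde x}^2=\int_\Delta\norm{x(\alpha)}_\alpha^2\d\mu(\alpha)=\norm{x}^2$; thus it is an isometric embedding of $\H_\Delta$ into $\H$, and its image is complete, hence closed in $\H$. That $P_\Delta$, the orthogonal projection onto this closed subspace, is well-defined is then just the standard Hilbert-space fact. I would also remark, for later use, that $P_\Delta$ acts fibrewise as multiplication by the indicator $\mathbbm{1}_\Delta(\alpha)$, i.e. $(P_\Delta x)(\alpha)=\mathbbm{1}_\Delta(\alpha)\,x(\alpha)$, which makes the complementarity $P_\Delta+P_{\A\setminus\Delta}=\1$ and the orthogonality of the ranges transparent.

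The only genuinely delicate point — and the one I would treat with care rather than wave away — is the measurability bookkeeping: verifying that restriction and extension-by-zero map the distinguished vector field of one measurable field to that of the other, and in particular that the fibrewise-defined $P_\Delta$ sends $\mathcal{F}$ into $\mathcal{F}$. This is where $\Delta\in\Sigma(\A)$ is essential, so that $\mathbbm{1}_\Delta$ is $\mu$-measurable and products of $\mathbbm{1}_\Delta$ with measurable scalar functions remain measurable. Everything else is bookkeeping with the definition of the direct integral and standard facts about closed subspaces of Hilbert spaces; none of it requires the structure theory of \cite{dixmier81} beyond what is recalled in Theorem~\ref{thm:directintegral}.
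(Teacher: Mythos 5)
Your proof is correct. The paper itself gives no proof of Proposition~\ref{prop:projections}, presenting it as an immediate consequence of the definitions and of the standard theory of direct integrals cited from \cite{dixmier81}; your argument --- restricting the distinguished vector field to $\Delta$ (taking the maximal field so that axiom (ii) holds), realising $\H_\Delta$ inside $\H$ via extension by zero as an isometric embedding whose image is complete and hence closed, identifying $P_\Delta$ with fibrewise multiplication by $\mathbbm{1}_\Delta$, and noting that the defining seminorm \eqref{eq:normdirectintegral} vanishes identically when $\mu(\Delta)=0$ --- is precisely the standard verification and correctly supplies the details the paper omits.
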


\section{Orthogonal additivity of quadratic forms}\label{sect:orthogonal}

In this section we will define the property of orthogonal additivity. This condition is an important hypothesis of Theorem~\ref{thm:mainthm1}
(see also Theorem~\ref{thm:reverseimplication}).

\begin{definition}\label{def:oa}
Let $(\A,\Sigma(\A),\mu)$ be a measure space with positive and $\sigma$-finite measure $\mu$.
Consider a measurable field of Hilbert spaces $\{\H^\alpha| \alpha\in\A\}$ and the direct integral
$$\H = \int_\A^\oplus \H^\alpha\d\mu(\alpha)\,.$$
Let $Q$ be a Hermitian quadratic form densely defined on $\D\subset\H$. We say that $Q$ is \textbf{orthogonally additive with respect to $\mu$}
or, simply, \textbf{orthogonally additive} if the following properties hold:
\begin{enumerate}
    \item \label{item:oa1} \textit{Stability of domain:} for any $\Delta\in\Sigma(\A)$ one has that $P_\Delta\D \subset \D$, where
    $P_\Delta$ is the orthogonal projection onto the subspace $\H_\Delta$ (cf.\ Proposition~\ref{prop:projections}).
    \item \label{item:oa2} \textit{$\Sigma$-boundedness:} for any $\Phi\in\D$ there exists $M_\Phi>0$ such that for all $\Delta \in \Sigma(\A)$
        $$|Q(P_\Delta\Phi)| < M_\Phi\,,$$
        where $M_{\Phi}$ is independent of $\Delta$.
    \item \label{item:oa3} \textit{Additivity:} for any finite partition $\{\Delta_i\}_{i = 1}^N \subset \Sigma(\A)$, $N\in\mathbb{N}$, of a measurable set $\Delta\in \Sigma(\A)$ one has that for all $\Phi\in\D$
        $$Q(P_\Delta \Phi) = \sum_{i=1}^N Q(P_{\Delta_i}\Phi)\,.$$
\end{enumerate}

\end{definition}

There is a natural alternative notion related to orthogonal additivity that one can define.

\begin{definition}\label{def:countableoa}
Let $(\A,\Sigma,\mu)$ and $\H$ as in Definition~\ref{def:oa}. Let $Q$ be a Hermitian quadratic form, densely defined on $\D\subset\H$. We say that the quadratic form is \textbf{countably orthogonally additive with respect to $\mu$} or, simply, \textbf{countably orthogonally additive} if
the following properties hold:
\begin{enumerate}
    \item \textit{Stability of domain:} for any $\Delta\in\Sigma(\A)$ one has that $P_\Delta\D \subset \D$.
    \item \label{item:coa} \textit{Countable additivity:} for any countable partition $\{\Delta_i\}_{i \in \mathbb{N}} \subset \Sigma(\A)$ of a measurable set $\Delta\in \Sigma(\A)$ one has that for all $\Phi\in\D$
        \begin{equation}\label{eq:coa}
            Q(P_\Delta \Phi) = \sum_{i=1}^\infty Q(P_{\Delta_i}\Phi)\,.
        \end{equation}
\end{enumerate}
\end{definition}

Note that the convergence of the preceding series is part of the requirement. Since the union of sets is not changed under permutation of indices
any rearrangement of the series must also be convergent; in particular, the series is absolutely convergent.

The next result is a straightforward consequence of the stability of domain condition.
\begin{lemma}\label{lem:denseD}
 Let $\H = \int_\A^\oplus \H^\alpha\d\mu(\alpha)$ be a direct integral of Hilbert spaces and $\D\subset\H$ be a subspace satisfying
 $P_\Delta\D \subset \D$ for any $\Delta\in\Sigma(\A)$. Then $\D$ is dense in $\H$ if and only if $P_\Delta \D$ is dense in
 $\H_\Delta=\int_\Delta^\oplus \H^\alpha\d\mu(\alpha)$ for
 any $\Delta\in\Sigma(\A)$.
\end{lemma}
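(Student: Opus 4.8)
The statement is an "if and only if" about density, and the natural strategy is to prove the two implications separately, using that $\mathbb{I}=P_\A$ and that $P_\Delta$ acts as the identity on $\H_\Delta$. The nontrivial direction is the forward one: assuming $\D$ dense in $\H$, show $P_\Delta\D$ dense in $\H_\Delta$ for each $\Delta\in\Sigma(\A)$. The reverse direction is immediate by taking $\Delta=\A$, since $P_\A=\mathbb{I}$ so that $P_\A\D=\D$ and $\H_\A=\H$.

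**The forward direction.** Fix $\Delta\in\Sigma(\A)$ and let $y\in\H_\Delta$ with $y\perp P_\Delta\D$ in $\H_\Delta$; I want to conclude $y=0$. The key point is that, viewing $y$ as an element of $\H$ (extending the section by $0$ outside $\Delta$, equivalently $y=P_\Delta y$), one has for every $\Phi\in\D$
\[
\scalar{y}{\Phi}=\scalar{P_\Delta y}{\Phi}=\scalar{y}{P_\Delta\Phi}=\scalar{y}{P_\Delta\Phi}_\alpha\text{-integrated over }\Delta,
\]
where I used self-adjointness of the projection $P_\Delta$ and the fact that $P_\Delta\Phi\in\H_\Delta$. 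Since $P_\Delta\Phi\in P_\Delta\D$ by the stability hypothesis, the right-hand side is the inner product in $\H_\Delta$ of $y$ with an element of $P_\Delta\D$, hence vanishes. Therefore $y\perp\D$ in $\H$, and density of $\D$ in $\H$ forces $y=0$. Thus the orthogonal complement of $P_\Delta\D$ inside $\H_\Delta$ is trivial, i.e.\ $P_\Delta\D$ is dense in $\H_\Delta$. (Here $P_\Delta\D$ is a subspace of $\H_\Delta$ because $P_\Delta\D\subset\D\cap\H_\Delta\subset\H_\Delta$.)

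**The reverse direction.** Suppose $P_\Delta\D$ is dense in $\H_\Delta$ for every $\Delta\in\Sigma(\A)$. Apply this with $\Delta=\A$: by Proposition~\ref{prop:projections} (or directly from the construction) $\H_\A=\H$ and $P_\A=\mathbb{I}$, so $P_\A\D=\D$ is dense in $\H$.

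**Main obstacle.** There is essentially no deep obstacle here; the only thing requiring care is the bookkeeping identifying $\H_\Delta$ as a subspace of $\H$ and checking that $\scalar{P_\Delta y}{\Phi}_\H=\scalar{y}{P_\Delta\Phi}_{\H_\Delta}$ — i.e.\ that restricting the ambient inner product to sections supported on $\Delta$ reproduces the inner product of $\H_\Delta$. This is a direct consequence of the integral formula for the scalar product in Theorem~\ref{thm:directintegral} together with the definition of $P_\Delta$ as orthogonal projection onto $\H_\Delta$, so it is routine; I would state it in one line rather than belabor it.
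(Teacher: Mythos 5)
Your proof is correct and follows essentially the same route as the paper: the reverse direction is immediate (the paper treats it as obvious; you make it explicit via $\Delta=\A$), and for the forward direction the paper argues by contraposition—extending a nonzero vector of $\H_\Delta$ orthogonal to $P_\Delta\D$ by zero to get a nonzero vector orthogonal to $\D$—which is exactly your orthogonal-complement argument phrased contrapositively. No substantive difference.
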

\begin{proof}
 To prove the non-obvious direction, assume $P_\Delta \D$ is not dense in $\H_\Delta$ for some $\Delta\in\Sigma(\A)$.
 Therefore, there is a nonzero $\Phi_\Delta\in \H_\Delta$ which is orthogonal to $P_\Delta \D$.
 Extending $\Phi_\Delta$ by $0$ on the complement $\Delta^c$ we obtain a nonzero vector in $\H$ which is orthogonal to $\D$, hence $\D$ is not dense in $\H$.
\end{proof}

In the rest of this section we will show that, under the assumption of closability of the quadratic form (cf.\ Definition~\ref{def:closable}),
orthogonal additivity and countable orthogonal additivity
are equivalent notions. In the following we will assume that the underlying Hilbert space $\H$ is a direct integral, i.e.,
\[
 \H = \int_\A^\oplus \H^\alpha\d\mu(\alpha)
\]
for a measure space $(\A,\Sigma(\A),\mu)$ with positive and $\sigma$-finite measure $\mu$.

\begin{lemma}\label{lem:oaimpliescoa}
    Let $Q$ be a closable and orthogonally additive quadratic form defined on $\D$ which is dense in the direct integral Hilbert space $\H$.
    Then, for any countable partition $\{\Delta_i\}_{i\in\mathbb{N}}$ of a measurable set $\Delta\in \Sigma(\A)$ and any $\Phi\in\D$ one has that
    $$\lim_{n\to\infty} Q\Big(P_{\cup_{i=n}^\infty\Delta_i}\Phi\Big) = 0\,.$$
\end{lemma}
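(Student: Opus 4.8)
The plan is to apply the closability hypothesis (Definition~\ref{def:closable}) to the sequence $\Psi_n:=P_{\Gamma_n}\Phi$, where $\Gamma_n:=\bigcup_{i=n}^\infty\Delta_i\in\Sigma(\A)$. Since the $\Delta_i$ are pairwise disjoint, the sets $\Gamma_n$ decrease and $\bigcap_{n}\Gamma_n=\emptyset$; hence, applying dominated convergence to $\alpha\mapsto\norm{\Phi(\alpha)}_\alpha^2$ (which is $\mu$-integrable because $\Phi\in\H$), we get $\norm{\Psi_n}^2=\int_{\Gamma_n}\norm{\Phi(\alpha)}_\alpha^2\,\d\mu(\alpha)\to 0$, i.e.\ $\Psi_n\to 0$ in $\H$. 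Each $\Psi_n$ lies in $\D$ by the stability-of-domain property. Thus, by closability, it suffices to verify the Cauchy-type condition $\lim_{n,m\to\infty}Q(\Psi_n-\Psi_m)=0$, and then $Q(\Psi_n)=Q(P_{\Gamma_n}\Phi)\to 0$ follows, which is the claim.

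For the Cauchy condition, take $n<m$; then $\Gamma_m\subset\Gamma_n$, so $P_{\Gamma_n}=P_{\Gamma_m}+P_{\Gamma_n\setminus\Gamma_m}$ with $\Gamma_n\setminus\Gamma_m=\bigcup_{i=n}^{m-1}\Delta_i$, whence $\Psi_n-\Psi_m=P_{\bigcup_{i=n}^{m-1}\Delta_i}\Phi$. Applying the finite additivity property (item~\ref{item:oa3} of Definition~\ref{def:oa}) to the partition $\{\Delta_n,\dots,\Delta_{m-1}\}$ of $\bigcup_{i=n}^{m-1}\Delta_i$ yields
$$Q(\Psi_n-\Psi_m)=\sum_{i=n}^{m-1}Q(P_{\Delta_i}\Phi)\,.$$
So the statement reduces to showing that the real series $\sum_{i=1}^\infty Q(P_{\Delta_i}\Phi)$ converges.

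The key observation is that this series is in fact absolutely convergent, and this is precisely where $\Sigma$-boundedness is used. For any finite set $F\subset\mathbb{N}$, finite additivity applied to the partition $\{\Delta_i\}_{i\in F}$ of $\bigcup_{i\in F}\Delta_i$ together with item~\ref{item:oa2} of Definition~\ref{def:oa} gives $\bigl|\sum_{i\in F}Q(P_{\Delta_i}\Phi)\bigr|=\bigl|Q(P_{\bigcup_{i\in F}\Delta_i}\Phi)\bigr|<M_\Phi$. Choosing $F$ to be the indices in a given finite range with $Q(P_{\Delta_i}\Phi)\ge 0$, and then those with $Q(P_{\Delta_i}\Phi)<0$, we obtain $\sum_{i\in F}|Q(P_{\Delta_i}\Phi)|<2M_\Phi$ for every finite $F$, hence $\sum_{i=1}^\infty|Q(P_{\Delta_i}\Phi)|\le 2M_\Phi<\infty$. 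Absolute convergence forces the tails $\sum_{i=n}^{m-1}Q(P_{\Delta_i}\Phi)$ to tend to $0$ as $n,m\to\infty$, so $\lim_{n,m\to\infty}Q(\Psi_n-\Psi_m)=0$, and closability concludes the argument. I do not expect a serious obstacle; the only mildly delicate points are the sign-splitting trick turning $\Sigma$-boundedness into absolute convergence, and the correct bookkeeping of the direct-integral projections $P_{\Gamma_n}$ on nested measurable sets.
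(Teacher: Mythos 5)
Your proof is correct, and it takes a genuinely different route at the key step. Both arguments reduce the lemma to applying closability to the tail vectors $\Psi_n=P_{\Gamma_n}\Phi$ (which lie in $\D$ by stability and tend to $0$ in norm), and both identify $Q(\Psi_n-\Psi_m)$ with $\sum_{i=n}^{m-1}Q(P_{\Delta_i}\Phi)$ via finite additivity. The difference is how the Cauchy condition $\lim_{n,m\to\infty}Q(\Psi_n-\Psi_m)=0$ is obtained. The paper argues by contradiction: it uses $\Sigma$-boundedness only to conclude that the sequence $a_n=Q(\Psi_n)$ is bounded, extracts a convergent (hence Cauchy) subsequence by Bolzano--Weierstrass, verifies the closability hypothesis along that subsequence, and derives a contradiction with $|a_{n_j}|>\epsilon$. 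You instead prove the stronger statement that the series $\sum_i Q(P_{\Delta_i}\Phi)$ converges \emph{absolutely}, with the explicit bound $\sum_i|Q(P_{\Delta_i}\Phi)|\le 2M_\Phi$ obtained by the sign-splitting trick (partial sums over the positive-sign and negative-sign index sets are each controlled by $\Sigma$-boundedness plus finite additivity); this is the standard ``bounded finitely additive set functions have bounded variation'' argument, and all your bookkeeping (the sets $\cup_{i\in F}\Delta_i$ are measurable, the empty-$F$ case is harmless, $P_{\Gamma_n}=P_{\Gamma_m}+P_{\Gamma_n\setminus\Gamma_m}$ for nested sets) checks out. Your route is direct rather than by contradiction, works with the full sequence rather than a subsequence, and as a bonus essentially yields Corollary~\ref{cor:oaimpliescoa} and the finite total variation of $\Omega_\Phi$ (Proposition~\ref{prop:finitevariation}) in one stroke, without appealing to Rudin's theorem; the paper's version is marginally lighter in that it never needs absolute convergence, only boundedness of $\{a_n\}$.
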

\begin{proof}
    Let $\{\Delta_i\}_{i\in\mathbb{N}}$ be a partition of $\Delta$ and assume $\mu(\Delta_i)\not=0$, $i\in\mathbb{N}$,
    since if only finitely many $\mu(\Delta_i)$ are different from $0$ there is nothing to prove. Take $\Phi\in\D$ and
    consider the sequence
    \[
     a_n := Q\Big(P_{\cup_{i=n}^\infty\Delta_i}\Phi\Big)\;,n\in\N\;,
    \]
    which is bounded by the $\Sigma$-boundedness property~(\ref{item:oa2}) in Definition~\ref{def:oa}.
    If ${a_n}$ does not converge to zero, then there exist $\epsilon>0$ and a subsequence $\{a_{n_j}\}_{j\in\mathbb{N}}$ which can be taken to be convergent
    and such that $|a_{n_j}|>\epsilon$, $j\in\mathbb{N}$. Since $\{a_{n_j}\}_{j\in\mathbb{N}}$ is, in particular, a Cauchy sequence,
    there exists a $K>0$ such that for all $n_j, n_l$ with $n_j\leq n_l$ and $j, l >K$ we have
    \begin{align}
        \epsilon &> |a_{n_j} - a_{n_l}| = |Q(P_{\cup_{i=n_j}^\infty\Delta_i}\Phi) - Q(P_{\cup_{i=n_l}^\infty\Delta_i}\Phi)| \\
                 &= | \sum_{i=n_j}^{n_l-1}Q(P_{\Delta_i}\Phi)|= |Q(P_{\cup_{i=n_j}^\infty\Delta_i}\Phi - P_{\cup_{i=n_l}^\infty\Delta_i}\Phi)|\,,
    \end{align}
    where we have used the orthogonal additivity property in the last two equalities.
    From the properties of the direct integral of Hilbert spaces $\lim_{n\to\infty} \norm{P_{\cup_{i=n}^\infty \Delta_i} \Phi} = 0$.
    Therefore, closability of $Q$ now implies that
    $$\lim_{j\to\infty}  Q(P_{\cup_{i=n_j}^\infty\Delta_i}\Phi) = 0\,,$$
    which contradicts $|a_{n_j}|>\epsilon$.
\end{proof}

\begin{corollary}\label{cor:oaimpliescoa}
    Let $Q$ be an Hermitian quadratic form on $\D$ which is dense in the direct integral Hilbert space $\H$.
    If $Q$ is closable and orthogonally additive, then $Q$ is countably orthogonally additive.
\end{corollary}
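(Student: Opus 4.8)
The plan is to derive countable orthogonal additivity from the finite version (property~(\ref{item:oa3}) of Definition~\ref{def:oa}) together with the tail estimate just established in Lemma~\ref{lem:oaimpliescoa}. The two conditions to verify are stability of domain and countable additivity. Stability of domain is immediate: it is literally property~(\ref{item:oa1}) of orthogonal additivity, so nothing needs to be done there.

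For countable additivity, fix a countable partition $\{\Delta_i\}_{i\in\mathbb{N}}$ of a measurable set $\Delta\in\Sigma(\A)$ and fix $\Phi\in\D$. The key observation is that for each $n\in\mathbb{N}$ the finite collection $\{\Delta_1,\dots,\Delta_{n-1},\cup_{i=n}^\infty\Delta_i\}$ is a finite partition of $\Delta$, so finite orthogonal additivity gives
\[
 Q(P_\Delta\Phi) = \sum_{i=1}^{n-1} Q(P_{\Delta_i}\Phi) + Q\Big(P_{\cup_{i=n}^\infty\Delta_i}\Phi\Big)\;.
\]
(The projections $P_{\Delta_i}$ and $P_{\cup_{i=n}^\infty\Delta_i}$ do map $\Phi$ into $\D$ by stability of domain, so the right-hand side makes sense.) Now I would let $n\to\infty$: by Lemma~\ref{lem:oaimpliescoa} the last term tends to $0$, hence the partial sums $\sum_{i=1}^{n-1} Q(P_{\Delta_i}\Phi)$ converge to $Q(P_\Delta\Phi)$, which is exactly \eqref{eq:coa}. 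Since the same argument applies verbatim to any rearrangement of the partition (a rearrangement is again a countable partition of the same set $\Delta$), the series is in fact absolutely convergent, as noted after Definition~\ref{def:countableoa}.

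There is no real obstacle here — the whole content has been front-loaded into Lemma~\ref{lem:oaimpliescoa}, whose proof is where closability was genuinely used. The only point requiring a word of care is that in the displayed telescoping identity one must know $P_{\cup_{i=n}^\infty\Delta_i}\Phi\in\D$ so that $Q$ may be evaluated on it; this is guaranteed because $\cup_{i=n}^\infty\Delta_i\in\Sigma(\A)$ and stability of domain applies to every measurable set. Everything else is a one-line passage to the limit.
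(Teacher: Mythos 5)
Your proposal is correct and is essentially identical to the paper's own proof: both split the countable partition into the first finitely many pieces plus the tail, apply finite additivity, and invoke Lemma~\ref{lem:oaimpliescoa} to kill the tail term in the limit. The extra remarks on stability of domain and rearrangement invariance are consistent with what the paper states around Definition~\ref{def:countableoa}.
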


\begin{proof}
    Let $\{\Delta_i\}_{i\in\mathbb{N}}$ be a countable partition of a measurable set $\Delta\in\Sigma(\A)$. Then, by orthogonal additivity, for any $n\in\mathbb{N}$ we have that
    $$Q(P_{\Delta}\Phi) = \sum_{i=1}^n Q(P_{\Delta_i}\Phi)+ Q(P_{\cup_{i=n+1}^\infty\Delta_i}\Phi)\,.$$
    From Lemma~\ref{lem:oaimpliescoa} we have that
    $$Q(P_{\Delta}\Phi) = \lim_{n\to\infty}\sum_{i=1}^n Q(P_{\Delta_i}\Phi)$$
    which implies countable orthogonal additivity.
\end{proof}

\begin{proposition}\label{prop:muoasesq}
Let $Q$ be a countably orthogonally additive quadratic form on $\D$ which is dense in the direct integral Hilbert space $\H$.
Let $\{\Delta_i\}_{i\in\mathbb{N}}$ be a partition of a measurable set $\Delta\in \Sigma(\A)$\,. Then, the sesquilinear form defined by the quadratic form verifies:
\begin{equation}\label{eq:mu-orthogonalsesq}
Q(P_{\Delta}\Phi,P_{\Delta}\Psi)=\sum_{i=1}^\infty Q(P_{\Delta_i}\Phi,P_{\Delta_i}\Psi)\,,\qquad \forall \Phi\,,\Psi\in\D\;.
\end{equation}
\end{proposition}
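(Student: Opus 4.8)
The plan is to derive the sesquilinear identity \eqref{eq:mu-orthogonalsesq} directly from the quadratic identity \eqref{eq:coa} via the polarisation formula \eqref{eq:quadratic-polarization}, being careful that the four vectors appearing in polarisation are again of the form $P_{\Delta_i}$ applied to something in $\D$. First I would fix $\Phi,\Psi\in\D$ and a countable partition $\{\Delta_i\}_{i\in\N}$ of $\Delta\in\Sigma(\A)$. By stability of domain, all of $P_\Delta\Phi,P_\Delta\Psi,P_{\Delta_i}\Phi,P_{\Delta_i}\Psi$ lie in $\D$, and since $P_\Delta$ is linear so are $P_\Delta(\Phi\pm\Psi)$, $P_\Delta(\Phi\pm i\Psi)$ and likewise with $P_{\Delta_i}$; moreover $P_\Delta(\Phi\pm\Psi)=P_\Delta\Phi\pm P_\Delta\Psi$ and similarly for the $\pm i$ combinations and for each $\Delta_i$.

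Next I would apply \eqref{eq:quadratic-polarization} to $Q(P_\Delta\Phi,P_\Delta\Psi)$, writing it as $\tfrac14\big(Q(P_\Delta(\Phi+\Psi))-Q(P_\Delta(\Phi-\Psi))+iQ(P_\Delta(\Phi+i\Psi))-iQ(P_\Delta(\Phi-i\Psi))\big)$. To each of these four terms I apply countable additivity \eqref{eq:coa} with the relevant vector ($\Phi+\Psi$, $\Phi-\Psi$, $\Phi+i\Psi$, $\Phi-i\Psi$ respectively), obtaining four convergent series. Since each of these four series is convergent (indeed absolutely convergent, as noted after Definition~\ref{def:countableoa}), their finite linear combination is a convergent series whose $i$-th term is $\tfrac14\big(Q(P_{\Delta_i}(\Phi+\Psi))-Q(P_{\Delta_i}(\Phi-\Psi))+iQ(P_{\Delta_i}(\Phi+i\Psi))-iQ(P_{\Delta_i}(\Phi-i\Psi))\big)$. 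Applying polarisation \eqref{eq:quadratic-polarization} once more, this time in reverse, identifies that $i$-th term as $Q(P_{\Delta_i}\Phi,P_{\Delta_i}\Psi)$, which is exactly \eqref{eq:mu-orthogonalsesq}.

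There is no serious obstacle here: the only points requiring a word of care are (a) that the polarisation identity genuinely holds for the quadratic form restricted to the sub-subspace spanned by the $P_\Delta$-images — this is immediate since \eqref{eq:quadratic-polarization} is an algebraic identity valid on all of $\D$ and $P_\Delta\D\subset\D$; and (b) that one may split the limit of the sum of four sequences into the sum of four limits, which is legitimate precisely because each of the four component series converges by \eqref{eq:coa}. The absolute convergence remark guarantees that no rearrangement subtleties arise. I would present this as a short three-line computation: polarise, apply \eqref{eq:coa} termwise, recombine by polarisation, and conclude.
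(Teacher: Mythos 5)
Your proposal is correct and is exactly the argument the paper gives (the paper's proof is a one-line remark that the result follows from countable orthogonal additivity and polarisation applied to both sides); you have merely spelled out the details, including the legitimate point that the four component series each converge so their linear combination may be summed termwise.
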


\begin{proof}
This is a direct application of countable orthogonal additivity and the polarisation identity on both sides (see Eq.~(\ref{eq:quadratic-polarization})).
\end{proof}

\begin{corollary}\label{cor:orthogonalq}
Let $Q$ be a closable and countably orthogonally additive quadratic form defined on $\D$ which is dense in the direct integral Hilbert space $\H$.
If $\Delta_1,\Delta_2\in\Sigma(\A)$ are two disjoint sets, then
$$Q(P_{\Delta_1}\Phi,P_{\Delta_2}\Psi)=0,\quad\forall \Phi,\Psi\in\D\;.$$
\end{corollary}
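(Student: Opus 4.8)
The plan is to reduce the vanishing of the cross term $Q(P_{\Delta_1}\Phi, P_{\Delta_2}\Psi)$ to the additivity already encoded in Proposition~\ref{prop:muoasesq}, exploiting the fact that $\Delta_1$ and $\Delta_2$ are disjoint so that $\{\Delta_1,\Delta_2\}$ is (part of) a partition of $\Delta := \Delta_1 \cup \Delta_2$. First I would observe that the projections satisfy $P_{\Delta_1} P_{\Delta_2} = 0$, $P_{\Delta_1} + P_{\Delta_2} = P_\Delta$, and $P_{\Delta_i}^2 = P_{\Delta_i}$, since $\H_{\Delta_1}$ and $\H_{\Delta_2}$ are orthogonal closed subspaces of $\H$ by Proposition~\ref{prop:projections}. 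By the stability of domain property, $P_{\Delta_1}\Phi$, $P_{\Delta_2}\Phi$, $P_\Delta\Phi$ all lie in $\D$, so every expression below is well-defined.

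The key step is to apply Eq.~\eqref{eq:mu-orthogonalsesq} of Proposition~\ref{prop:muoasesq} to the two-element partition $\{\Delta_1,\Delta_2\}$ of $\Delta$ (padding with empty sets, which contribute nothing since $\mu(\emptyset)=0$ forces $\H_\emptyset = \{0\}$). This gives, for all $\Phi,\Psi\in\D$,
\begin{equation}
Q(P_\Delta\Phi, P_\Delta\Psi) = Q(P_{\Delta_1}\Phi, P_{\Delta_1}\Psi) + Q(P_{\Delta_2}\Phi, P_{\Delta_2}\Psi)\;.
\end{equation}
On the other hand, since $P_\Delta = P_{\Delta_1} + P_{\Delta_2}$ and $P_{\Delta_j}P_{\Delta_i}\Phi = \delta_{ij}P_{\Delta_i}\Phi$ (using $P_{\Delta_i}\Phi \in \H_{\Delta_i}$), I would expand the left-hand side using sesquilinearity of $Q$ on $\D\times\D$:
\begin{align}
Q(P_\Delta\Phi, P_\Delta\Psi) &= Q(P_{\Delta_1}\Phi + P_{\Delta_2}\Phi,\, P_{\Delta_1}\Psi + P_{\Delta_2}\Psi)\notag\\
&= Q(P_{\Delta_1}\Phi, P_{\Delta_1}\Psi) + Q(P_{\Delta_2}\Phi, P_{\Delta_2}\Psi)\notag\\
&\quad + Q(P_{\Delta_1}\Phi, P_{\Delta_2}\Psi) + Q(P_{\Delta_2}\Phi, P_{\Delta_1}\Psi)\;.
\end{align}
Comparing the two displays yields $Q(P_{\Delta_1}\Phi, P_{\Delta_2}\Psi) + Q(P_{\Delta_2}\Phi, P_{\Delta_1}\Psi) = 0$ for all $\Phi,\Psi\in\D$.

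It remains to separate the two cross terms. The trick is to replace $\Psi$ by $i\Psi$ (or equivalently exploit the sesquilinear structure): since $Q$ is antilinear in the first slot and linear in the second, $Q(P_{\Delta_1}\Phi, P_{\Delta_2}(i\Psi)) = i\,Q(P_{\Delta_1}\Phi, P_{\Delta_2}\Psi)$ while $Q(P_{\Delta_2}(i\Psi) , \cdot)$-type terms pick up $\overline{i} = -i$; more directly, apply the vanishing identity with $\Psi$ replaced by $i\Psi$ to get $i\,Q(P_{\Delta_1}\Phi, P_{\Delta_2}\Psi) - i\,Q(P_{\Delta_2}\Phi, P_{\Delta_1}\Psi) = 0$, hence $Q(P_{\Delta_1}\Phi, P_{\Delta_2}\Psi) = Q(P_{\Delta_2}\Phi, P_{\Delta_1}\Psi)$, and combined with the earlier relation both terms vanish. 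I expect the only mild subtlety — not really an obstacle — to be the bookkeeping of which arguments the projections act on and confirming that the finite partition used in Proposition~\ref{prop:muoasesq} may be padded by null sets; closability of $Q$ is not even needed for this corollary beyond what is already baked into Proposition~\ref{prop:muoasesq} via Corollary~\ref{cor:oaimpliescoa}.
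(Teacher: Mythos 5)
Your overall strategy is sound and close in spirit to the paper's: both rest on Proposition~\ref{prop:muoasesq} together with sesquilinearity and the mutual orthogonality of the projections. Steps 1--3 of your argument are correct and do yield
\begin{equation*}
Q(P_{\Delta_1}\Phi, P_{\Delta_2}\Psi) + Q(P_{\Delta_2}\Phi, P_{\Delta_1}\Psi) = 0 \quad\text{for all } \Phi,\Psi\in\D\;.
\end{equation*}
However, your final separation step fails. In \emph{both} cross terms the vector $\Psi$ sits in the second (linear) slot and $\Phi$ in the first (antilinear) slot, so replacing $\Psi$ by $i\Psi$ multiplies \emph{both} terms by the same factor $i$ and returns $i$ times the identity you already had; it does not produce the sign flip $i\,Q(P_{\Delta_1}\Phi, P_{\Delta_2}\Psi) - i\,Q(P_{\Delta_2}\Phi, P_{\Delta_1}\Psi) = 0$ that you claim. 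The substitution trick you have in mind separates terms only when the two summands have opposite (anti)linearity in the substituted argument, which is not the case here.

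The gap is easily repaired, and the repair is essentially what the paper does. Since the identity above holds for all $\Phi,\Psi\in\D$ and $\D$ is stable under the projections, substitute $\Phi\mapsto P_{\Delta_1}\Phi$ (which lies in $\D$): then $P_{\Delta_1}P_{\Delta_1}\Phi = P_{\Delta_1}\Phi$ while $P_{\Delta_2}P_{\Delta_1}\Phi = 0$, so the second cross term becomes $Q(0, P_{\Delta_1}\Psi) = 0$ and the identity collapses to $Q(P_{\Delta_1}\Phi, P_{\Delta_2}\Psi) = 0$. The paper takes an even more direct route: it applies Eq.~\eqref{eq:mu-orthogonalsesq} to the already-projected pair $P_{\Delta_1}\Phi$, $P_{\Delta_2}\Psi$ with the three-set partition $\{\Delta_1,\Delta_2,(\Delta_1\cup\Delta_2)^c\}$ of $\A$, whereupon every summand on the right-hand side contains a zero argument and vanishes, with no cross-term bookkeeping needed. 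Your side remark that closability is not really used here (beyond entering via Corollary~\ref{cor:oaimpliescoa} when one starts from orthogonal additivity) is correct.
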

\begin{proof}
Given $\Delta_1,\Delta_2$ as above, define $\Delta_3:=(\Delta_1\cup\Delta_2)^c$ and
consider the partition $\{\Delta_1,\Delta_2,\Delta_3\}$ of $\A$. Then, by the preceding proposition we have
\begin{align}
Q(P_{\Delta_1}\Phi,P_{\Delta_2}\Psi)&=Q(P_{\Delta_1}^2\Phi,P_{\Delta_1}P_{\Delta_2}\Psi)+Q(P_{\Delta_2}P_{\Delta_1}\Phi,P^2_{\Delta_2}\Psi)\notag\\& \phantom{aaa}+Q(P_{\Delta_3}P_{\Delta_1}\Phi,P_{\Delta_3}P_{\Delta_2}\Psi)\notag\\
&=0\;,\notag
\end{align}
where we have used that the projections $P_{\Delta_i}\,$, $i=1,2,3\,,$ are mutually orthogonal.
\end{proof}

The property of countable orthogonal additivity of the quadratic form allows to introduce the following family of real measures
on $\Sigma(\A)$.

\begin{definition}\label{def:realmeasure}
Let $Q$ be a countably orthogonally additive Hermitian quadratic form on $\D$ which is dense on the direct integral Hilbert space $\H$.
For any $\Phi\in\D$ we define a (signed) real measure on the measure space $(\A,\Sigma(\A))$ by
$$
\map[\Omega_{\Phi}]{\Sigma(\A)}{\mathbb{R}}{\Delta}{Q(P_\Delta\Phi)}\;.
$$
Recall that this measure is countably additive by property (\ref{item:coa}) in
Definition~\ref{def:countableoa}.
\end{definition}

\begin{proposition}\label{prop:finitevariation}
Let $\Omega_\Phi$ be the real measure associated to a countably orthogonally additive Hermitian quadratic form $Q$ on $\D$ and $\Phi\in\D$.
The total variation $|\Omega_\Phi|$ of the measure $\Omega_\Phi$ is a finite measure, i.e.,
$$|\Omega_\Phi|(\A)<\infty\;.$$
\end{proposition}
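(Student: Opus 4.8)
The goal is to show $|\Omega_\Phi|(\A)<\infty$, where $\Omega_\Phi(\Delta)=Q(P_\Delta\Phi)$ is the real (signed) measure attached to a countably orthogonally additive Hermitean quadratic form. The obvious temptation is to invoke a general theorem from measure theory: a countably additive \emph{real-valued} (finite-valued) signed measure automatically has finite total variation. This is indeed true — it is the fact that a signed measure taking values in $\R$ (not the extended line) is bounded, equivalently its Jordan decomposition consists of two finite measures — so in principle one could simply cite this. However, it is cleaner and more self-contained to argue directly from the $\Sigma$-boundedness hypothesis built into orthogonal additivity, which is precisely the quantitative input that forces boundedness.

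So the plan is as follows. First I would recall that, by Definition~\ref{def:realmeasure}, $\Omega_\Phi$ is a genuine countably additive real measure with $\Omega_\Phi(\A)=Q(\Phi)$ finite. Next I would fix $\Phi\in\D$ and let $M_\Phi>0$ be the constant from the $\Sigma$-boundedness property~(\ref{item:oa2}) in Definition~\ref{def:oa}, so that $|Q(P_\Delta\Phi)|=|\Omega_\Phi(\Delta)|<M_\Phi$ for \emph{every} $\Delta\in\Sigma(\A)$. Then for any finite measurable partition $\{\Delta_i\}_{i=1}^N$ of $\A$, I would split the index set into $I_+=\{i: \Omega_\Phi(\Delta_i)\geq 0\}$ and $I_-=\{i:\Omega_\Phi(\Delta_i)<0\}$, and use finite additivity of $\Omega_\Phi$ (which follows from countable additivity) to write
\[
\sum_{i=1}^N|\Omega_\Phi(\Delta_i)| = \sum_{i\in I_+}\Omega_\Phi(\Delta_i)-\sum_{i\in I_-}\Omega_\Phi(\Delta_i)
= \Omega_\Phi\Big(\bigcup_{i\in I_+}\Delta_i\Big)-\Omega_\Phi\Big(\bigcup_{i\in I_-}\Delta_i\Big)<2M_\Phi\;.
\]
Taking the supremum over all finite partitions of $\A$ gives $|\Omega_\Phi|(\A)\leq 2M_\Phi<\infty$, which is the claim. (If one prefers the definition of total variation via countable partitions, the same estimate applies after noting that countable additivity lets one group the positive and negative pieces; alternatively one passes to the limit over finite partitions, which already exhausts the supremum.)

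The only genuinely delicate point is making sure the supremum defining $|\Omega_\Phi|(\A)$ is really controlled by partitions of the \emph{whole} space $\A$ rather than of an arbitrary $\Delta$, and that regrouping countably many signed pieces into their positive and negative unions is legitimate — both are immediate from countable additivity of $\Omega_\Phi$ and the fact that unions are unaffected by the order of the index set. I expect no serious obstacle: the substantive content is entirely front-loaded into the $\Sigma$-boundedness axiom, and the argument is just the standard proof that a bounded finitely additive signed set function has bounded variation, specialised to our setting.
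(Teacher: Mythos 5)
There is a genuine gap, and it lies exactly where you front-load the ``substantive content'': your detailed argument takes the constant $M_\Phi$ from the $\Sigma$-boundedness property (\ref{item:oa2}) of Definition~\ref{def:oa}, but that property is \emph{not} a hypothesis of this proposition. The proposition assumes only that $Q$ is \emph{countably} orthogonally additive, and Definition~\ref{def:countableoa} consists of just two conditions — stability of the domain and countable additivity — with no uniform bound on $|Q(P_\Delta\Phi)|$ over $\Delta\in\Sigma(\A)$. Worse, using $\Sigma$-boundedness here would make the paper's development circular: Corollary~\ref{cor:uniformbound} \emph{derives} $\Sigma$-boundedness from the present proposition, and Theorem~\ref{thm:oa=coa} then uses that corollary to establish the equivalence of the two notions of additivity. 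So the uniform bound is a consequence of this result, not an available input. (Appealing to the equivalence of the two definitions at this stage is likewise off the table, and in any case that equivalence requires closability, which is also not assumed here.)

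The correct route is precisely the one you flag as the ``obvious temptation'' and then set aside: $\Omega_\Phi$ is a countably additive set function taking values in $\R$ (not the extended line), since $\Omega_\Phi(\Delta)=Q(P_\Delta\Phi)$ is a finite real number for every $\Delta$ because $P_\Delta\Phi\in\D$, and the defining series in Eq.~\eqref{eq:coa} converges (unconditionally, hence absolutely, since unions are permutation-invariant). A finite-valued countably additive measure automatically has finite total variation; this is exactly Theorem~6.4 of \cite{rudin86}, which is what the paper cites. Your instinct that the general theorem suffices was right; the ``cleaner, more self-contained'' alternative you chose instead smuggles in a hypothesis that the proposition does not grant and that the surrounding results are designed to prove.
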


\begin{proof}
This is a direct consequence of Theorem~6.4 in \cite{rudin86}. The statement follows under the assumptions that the series  on the right hand side of Equation~\eqref{eq:coa} converges and that $$|\Omega_\Phi(\Delta)| = |Q(P_{\Delta}\Phi)|< \infty$$ for any $\Phi\in\D$ and any $\Delta\in\Sigma(\A)$.
\end{proof}

\begin{corollary}\label{cor:uniformbound}
    Let $Q$ be a Hermitian quadratic form densely defined on $\D\subset\H$ and satisfying countable orthogonal additivity. Then, for all $\Phi\in\D$ there exists $M_\Phi>0$ such that for all $\Delta\in\Sigma(\A)$
    $$|Q(P_\Delta\Phi)| < M_\Phi. $$
\end{corollary}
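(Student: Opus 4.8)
Corollary~\ref{cor:uniformbound} states: a countably orthogonally additive Hermitean quadratic form on a dense domain satisfies the $\Sigma$-boundedness property, i.e., for each $\Phi \in \D$ there is $M_\Phi > 0$ with $|Q(P_\Delta \Phi)| < M_\Phi$ for all $\Delta \in \Sigma(\A)$.

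The plan: this follows immediately from the preceding results. Given $\Phi \in \D$, recall the real measure $\Omega_\Phi(\Delta) = Q(P_\Delta \Phi)$ from Definition~\ref{def:realmeasure}. By Proposition~\ref{prop:finitevariation}, the total variation $|\Omega_\Phi|$ is a finite measure, so $|\Omega_\Phi|(\A) < \infty$. Since for any $\Delta \in \Sigma(\A)$ one has $|Q(P_\Delta \Phi)| = |\Omega_\Phi(\Delta)| \le |\Omega_\Phi|(\Delta) \le |\Omega_\Phi|(\A)$, we can take $M_\Phi := |\Omega_\Phi|(\A) + 1$.

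The only "obstacle" is noting that the statement should use $\le$ or a strict inequality — since the paper's $\Sigma$-boundedness condition in Definition~\ref{def:oa}\eqref{item:oa2} is phrased with strict inequality, adding $1$ (or any positive constant) to $|\Omega_\Phi|(\A)$ handles this trivially. There is no real difficulty; this corollary simply records that countable orthogonal additivity already encodes $\Sigma$-boundedness, which will be used when comparing the two notions of orthogonal additivity and in the representation theorems of Section~\ref{sect:representation}.
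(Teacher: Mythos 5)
Your proposal is correct and follows essentially the same route as the paper: both reduce the claim to the finiteness of the total variation $|\Omega_\Phi|(\A)$ from Proposition~\ref{prop:finitevariation} and the chain $|Q(P_\Delta\Phi)|=|\Omega_\Phi(\Delta)|\le|\Omega_\Phi|(\Delta)\le|\Omega_\Phi|(\A)$. Your extra remark about adding $1$ to secure the strict inequality is a harmless refinement the paper glosses over.
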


\begin{proof}
    We have that
    $$|Q(P_\Delta\Phi)|  = |\Omega_\Phi(\Delta)| \leq  |\Omega_\Phi|(\Delta) \leq |\Omega_\Phi|(\A) <\infty\,,$$
    where we have used the properties of the total variation of a measure and the previous proposition.
\end{proof}

We can summarise the relation between the two notions of orthogonal additivity in the following theorem.

\begin{theorem}\label{thm:oa=coa}
    Let $Q$ be a closable Hermitian quadratic form on $\D$ which is dense on the direct integral Hilbert space $\H$.
    Then $Q$ is orthogonally additive if and only if it is countably orthogonally additive.
\end{theorem}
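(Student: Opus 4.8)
The plan is to prove the two implications separately. The forward direction, that orthogonal additivity implies countable orthogonal additivity, has essentially already been established: it is exactly the content of Corollary~\ref{cor:oaimpliescoa}, which uses closability via Lemma~\ref{lem:oaimpliescoa}. So for this direction I would simply invoke that corollary. The only point worth a remark is that Definition~\ref{def:oa} includes stability of domain as property~(\ref{item:oa1}), and Definition~\ref{def:countableoa} requires the same, so there is nothing extra to check there.

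For the reverse direction, assume $Q$ is closable and countably orthogonally additive. I must recover the three defining properties of orthogonal additivity. Stability of domain is literally shared between the two definitions, so it carries over for free. The $\Sigma$-boundedness property~(\ref{item:oa2}) is precisely Corollary~\ref{cor:uniformbound}: countable orthogonal additivity yields a finite real measure $\Omega_\Phi$ with finite total variation (Proposition~\ref{prop:finitevariation}), whence $|Q(P_\Delta\Phi)| \le |\Omega_\Phi|(\A) =: M_\Phi$ uniformly in $\Delta$. That leaves finite additivity, property~(\ref{item:oa3}). But this is immediate: given a finite partition $\{\Delta_i\}_{i=1}^N$ of $\Delta$, extend it to a countable partition by appending empty sets $\Delta_i = \emptyset$ for $i > N$. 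Since $P_\emptyset = 0$ (by Proposition~\ref{prop:projections}, as $\mu(\emptyset) = 0$ gives $\H_\emptyset = \{0\}$) we have $Q(P_{\Delta_i}\Phi) = Q(0) = 0$ for $i > N$, and the countable additivity identity \eqref{eq:coa} collapses to the finite sum $\sum_{i=1}^N Q(P_{\Delta_i}\Phi)$.

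In short, the theorem is a packaging result: neither direction requires genuinely new work beyond assembling Corollary~\ref{cor:oaimpliescoa}, Corollary~\ref{cor:uniformbound}, and the trivial observation about padding a finite partition with null sets. The one subtlety — and the only place closability is genuinely used — is in the forward implication, where passing from finite to countable additivity needs the tail estimate of Lemma~\ref{lem:oaimpliescoa}, whose proof exploits closability to conclude that $Q(P_{\cup_{i=n}^\infty \Delta_i}\Phi) \to 0$ from the fact that the projections $P_{\cup_{i=n}^\infty\Delta_i}\Phi$ tend to $0$ in norm while the tail differences of $Q$ form a Cauchy sequence. I do not anticipate any real obstacle; the main thing to be careful about is citing the right earlier result for each of the three conditions and noting explicitly where closability enters.
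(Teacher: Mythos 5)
Your proposal is correct and follows essentially the same route as the paper: the forward implication is delegated to Corollary~\ref{cor:oaimpliescoa}, the $\Sigma$-boundedness to Corollary~\ref{cor:uniformbound}, and finite additivity is read off from countable additivity (the paper dismisses this last step as ``clear'', while you spell out the padding-by-null-sets argument). No gaps.
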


\begin{proof}
    That orthogonal additivity implies countable orthogonal additivity is shown in Corollary~\ref{cor:oaimpliescoa}. Clearly we have that (\ref{item:coa}) of Definition~\ref{def:countableoa} implies (\ref{item:oa3}) of Definition~\ref{def:oa}. In addition, Corollary~\ref{cor:uniformbound} states that countable orthogonal additivity implies (\ref{item:oa2}) of Definition~\ref{def:oa}.
\end{proof}


The fact that the Hermitian form $Q$ is defined on a direct integral Hilbert space
$ \H = \int_\A^\oplus \H^\alpha\d\mu(\alpha)$
(for a measure space $(\A,\Sigma(\A),\mu)$, with positive and $\sigma$-finite measure $\mu$)
and has a stable domain $\D$, allows the interpretation of $Q$ as a map with three arguments:
\begin{equation}\label{eq:trimap}
 Q\colon \D\times\D\times\Sigma(\A)\to \mathbb{C}\;,\quad\mathrm{where}\quad (\Phi,\Psi,\Delta)\mapsto Q(P_\Delta\Phi,P_\Delta\Psi)\;.
\end{equation}
(and similarly for the associated quadratic form $Q\colon \D\times\Sigma(\A)\to \R$). Fixing $\Phi\in\D$ we have
considered in Definition~\ref{def:realmeasure} the real measure $\Omega_\Phi$ on $(\A,\Sigma(\A))$. In the next proposition we
continue exploring properties of this measure.

\begin{proposition}\label{prop:RN}
Let $Q$ be a countably orthogonally additive quadratic form densely defined in \hbox{$\D\subset\H$} and let $\Omega_\Phi$ be the associated real measure,
for every $\Phi\in\D$. There exists a density function $\omega_\Phi\in L^1(\mu)$ such that:
$$
Q(P_\Delta\Phi) = \Omega_\Phi(\Delta)=\int_\Delta \omega_\Phi(\alpha)\d\mu(\alpha)\,, \quad\forall \Delta\in\Sigma(\A)\;.
$$
\end{proposition}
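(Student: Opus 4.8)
The plan is to invoke the Radon--Nikodym theorem for the signed (real) measure $\Omega_\Phi$ with respect to the $\sigma$-finite positive measure $\mu$. The two hypotheses that must be verified are: (1) $\Omega_\Phi$ is a genuine finite signed measure on $\Sigma(\A)$, and (2) $\Omega_\Phi$ is absolutely continuous with respect to $\mu$, i.e.\ $\mu(\Delta)=0$ implies $\Omega_\Phi(\Delta)=0$. Point (1) is already in hand: by Definition~\ref{def:realmeasure}, $\Omega_\Phi$ is countably additive and finite, and by Proposition~\ref{prop:finitevariation} its total variation $|\Omega_\Phi|$ is a finite measure, so $\Omega_\Phi$ is a finite signed measure in the sense required by the Radon--Nikodym theorem.

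The heart of the argument is establishing absolute continuity. Suppose $\Delta\in\Sigma(\A)$ with $\mu(\Delta)=0$. By Proposition~\ref{prop:projections}, $\mu(\Delta)=0$ forces $\H_\Delta=\{0\}$, hence the orthogonal projection $P_\Delta$ is the zero operator on $\H$; in particular $P_\Delta\Phi=0$. Therefore
$$\Omega_\Phi(\Delta)=Q(P_\Delta\Phi)=Q(0)=0\;,$$
using $Q(0)=0$ noted after the polarization identity. This shows $\Omega_\Phi\ll\mu$. The Radon--Nikodym theorem (for finite signed measures against a $\sigma$-finite positive measure) then yields a function $\omega_\Phi$, unique up to $\mu$-null sets, with $\Omega_\Phi(\Delta)=\int_\Delta\omega_\Phi\,\d\mu$ for all $\Delta\in\Sigma(\A)$; finiteness of $|\Omega_\Phi|$ guarantees $\omega_\Phi\in\L^1(\mu)$.

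The main (and only real) obstacle is a bookkeeping one: making sure the version of Radon--Nikodym being cited genuinely applies to \emph{signed} measures and not merely positive ones, and that $\sigma$-finiteness of $\mu$ is in force (it is, by the standing assumption on direct integrals). A clean way to sidestep any worry is to apply the classical positive-measure Radon--Nikodym theorem separately to the Jordan decomposition $\Omega_\Phi=\Omega_\Phi^+-\Omega_\Phi^-$, each of which is a finite positive measure absolutely continuous with respect to $\mu$ by the computation above, and then set $\omega_\Phi$ equal to the difference of the two resulting densities; integrability follows since $\int_\A|\omega_\Phi|\,\d\mu\le\int_\A(\d\Omega_\Phi^+/\d\mu+\d\Omega_\Phi^-/\d\mu)\,\d\mu=|\Omega_\Phi|(\A)<\infty$. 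I would present the short version with a direct citation and relegate the Jordan-decomposition remark to a parenthetical.
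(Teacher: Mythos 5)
Your proposal is correct and follows essentially the same route as the paper: verify absolute continuity via Proposition~\ref{prop:projections} (from $\mu(\Delta)=0$ one gets $\H_\Delta=\{0\}$, hence $Q(P_\Delta\Phi)=0$), use the finiteness of the total variation from Proposition~\ref{prop:finitevariation}, and apply the Radon--Nikodym theorem. The Jordan-decomposition remark is a harmless extra precaution not present in the paper's proof.
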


\begin{proof}
This is a direct application of the Radon-Nikodym Theorem. Indeed, $\mu$ is a $\sigma$-finite, positive measure and $\Omega_\Phi$ is a real measure with finite total variation by Proposition~\ref{prop:finitevariation}. It only remains to show that $\Omega_\Phi$ is absolutely continuous with respect to $\mu$\,. Let $\Delta\in\Sigma(\A)$ be such that $\mu(\Delta)=0$. Then, by Proposition~\ref{prop:projections}, we have $\H_\Delta=\{0\}$ and, therefore, $\Omega_\Phi(\Delta)=Q(P_{\Delta}\Phi)=0$\,.
\end{proof}

\section{Representation of non-semibounded quadratic forms}\label{sect:representation}

In this section we present a representation theorem for non-semibounded quadratic forms on a Hilbert space $\H$ which we assume to have the structure
of a direct integral $\H = \int^{\oplus}_\A\H^{\alpha}\d\mu(\alpha)$, where $(\A,\Sigma(\A),\mu)$ is a measure space with $\sigma$-finite and
positive measure $\mu$. In order to highlight the essential ideas behind the representation theorem we will make some additional assumptions
on the measure space. For a countable set $\A$, we say that the measure $\mu$
is point-supported on $(\A,\Sigma(\A))$ if $\mu$ measures all subsets of $\A$, i.e., $\Sigma(\A)=\mathcal{P}(\A)$ and for any $\Delta\subset\A$ we have
\[
 \mu(\Delta)=\sum_{x\in\Delta}\mu(\{x\})\;.
\]
It is clear that, in particular, $\mu$ is purely atomic (see, e.g., \cite{fremlin10}).

Our first two hypothesis for the representation theorem are:
\begin{Henumerate}
    \item \label{H1} $Q$ is a closable and orthogonally additive quadratic form on the domain $\D$ which is dense in the direct integral
    Hilbert space $\H$.
    \item \label{H2} The positive measure $\mu$ on $(\A,\Sigma(\A))$ is point supported and $\mu(\{\alpha\})>0$, $\alpha\in\A$.
\end{Henumerate}

\begin{definition}\label{def:qalpha}
    Let $Q$ be a quadratic form satisfying \ref{H1} and \ref{H2}, and for $\Phi\in\D$ let $\omega_\Phi\in L^1(\mu)$ be the density function of Proposition~\ref{prop:RN}. For any $\alpha\in\A$, we introduce the sesquilinear form $$\map[q_\alpha]{P_\alpha\D\times P_\alpha\D}{\mathbb{C}}{(P_\alpha\Phi,P_\alpha\Psi)}{\frac{1}{4}\sum_{k=0}^3 (-i)^k\omega_{P_\alpha(\Phi+i^k\Psi)}(\alpha)}.$$
    The restriction to the diagonal will be denoted by the same symbol $q_\alpha$ as usual, cf.\ Definition~\ref{def:qf}.
\end{definition}

\begin{remark}
Notice that it follows $$\omega_{\Phi}(\alpha) = Q(P_\alpha\Phi)/\mu(\{\alpha\})=\omega_{P_\alpha\Phi}(\alpha)=q_\alpha(P_\alpha\Phi).$$
\end{remark}

\begin{proposition}\label{pro:sector}
    Let $Q\colon\D\to\R$ be a quadratic form satisfying
    \ref{H1} and \ref{H2}. For any $\alpha\in\A$ consider the map $q_\alpha$ of Definition~\ref{def:qalpha}.
    Then $q_\alpha$ specifies a closable, Hermitian quadratic form on $P_\alpha\D$ which is dense on $\H^\alpha$.
\end{proposition}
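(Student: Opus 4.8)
The plan is to verify the three required properties of $q_\alpha$ in turn: that it is well-defined as a quadratic form on $P_\alpha\D$, that its domain $P_\alpha\D$ is dense in $\H_\alpha$, and that it is closable. For a point-supported measure, the singleton $\{\alpha\}$ lies in $\Sigma(\A)=\mathcal P(\A)$ (assuming $\mu(\{\alpha\})\neq 0$; the atoms with zero measure give $\H_\alpha=\{0\}$ by Proposition~\ref{prop:projections} and are trivial), so $P_\alpha:=P_{\{\alpha\}}$ is a bona fide orthogonal projection onto $\H_\alpha=\H_{\{\alpha\}}$. The stability of domain condition~(\ref{item:oa1}) in the definition of orthogonal additivity (part of \ref{H1}) gives $P_\alpha\D\subset\D$, so $Q(P_\alpha\Phi)$ makes sense for every $\Phi\in\D$, and we simply set $q_\alpha(P_\alpha\Phi):=Q(P_\alpha\Phi)$. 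To see this is unambiguous, note that if $P_\alpha\Phi=P_\alpha\Psi$ then $q_\alpha$ assigns the same value; the associated sesquilinear form on $P_\alpha\D\times P_\alpha\D$ is inherited from $Q$ via $q_\alpha(P_\alpha\Phi,P_\alpha\Psi):=Q(P_\alpha\Phi,P_\alpha\Psi)$, which is Hermitean because $Q$ is, and one checks the diagonal restriction matches $q_\alpha$ using the polarisation identity~\eqref{eq:quadratic-polarization}. Density of $P_\alpha\D$ in $\H_\alpha$ is immediate from Lemma~\ref{lem:denseD} applied with $\Delta=\{\alpha\}$, since $\D$ is dense in $\H$ and satisfies $P_\Delta\D\subset\D$.

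The substantive point is closability of $q_\alpha$, and this is where I expect the main (mild) obstacle. We must show: if $\{\Psi_n\}\subset P_\alpha\D$ with $\Psi_n\to 0$ in $\H_\alpha$ and $q_\alpha(\Psi_n-\Psi_m)\to 0$ as $n,m\to\infty$, then $q_\alpha(\Psi_n)\to 0$. The idea is to lift this to a statement about $Q$ on $\H$ and invoke closability of $Q$ from \ref{H1}. Write $\Psi_n=P_\alpha\Phi_n$ for some $\Phi_n\in\D$; but one cannot directly claim $\Phi_n\to 0$ in $\H$, so instead work with the vectors $\Psi_n$ themselves regarded as elements of $\H$ via the embedding $\H_\alpha=\H_{\{\alpha\}}\hookrightarrow\H$ (extend by zero off $\{\alpha\}$). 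Since $P_\alpha\D\subset\D$, each $\Psi_n$ lies in $\D$, and $P_\alpha\Psi_n=\Psi_n$, so $Q(\Psi_n)=Q(P_\alpha\Psi_n)=q_\alpha(\Psi_n)$ and likewise $Q(\Psi_n-\Psi_m)=q_\alpha(\Psi_n-\Psi_m)$. Moreover $\norm{\Psi_n}_\H=\norm{\Psi_n}_\alpha\to 0$. Thus $\{\Psi_n\}\subset\D$ satisfies exactly the hypotheses of Definition~\ref{def:closable} for $Q$, and closability of $Q$ yields $Q(\Psi_n)\to 0$, i.e.\ $q_\alpha(\Psi_n)\to 0$, as required.

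The one thing to be careful about is the identification $\H_{\{\alpha\}}\subset\H$ and the compatibility of the two norms and of $Q$ with $q_\alpha$ under this identification; this rests on Proposition~\ref{prop:projections} (so that $\H_{\{\alpha\}}$ is genuinely a closed subspace of $\H$ with $P_{\{\alpha\}}$ the corresponding projection) and on the stability-of-domain hypothesis. Once these bookkeeping points are in place the argument is a direct reduction, with closability of $q_\alpha$ being nothing more than closability of $Q$ restricted to sequences supported on the single atom $\{\alpha\}$. I would present the proof in the order: (1) reduce to $\mu(\{\alpha\})\neq 0$ and set up $P_\alpha$; (2) define $q_\alpha$ and its sesquilinear form, noting Hermiticity; (3) density via Lemma~\ref{lem:denseD}; (4) closability via the lifting argument above.
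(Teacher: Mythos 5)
Your proof is correct and follows essentially the same route as the paper's: density of $P_\alpha\D$ via Lemma~\ref{lem:denseD}, and closability by extending a sequence in $P_\alpha\D$ by zero off $\{\alpha\}$ to a sequence in $\D$ and invoking closability of $Q$. The only cosmetic difference is that the paper establishes that $q_\alpha$ is a Hermitean sesquilinear form by appealing to Proposition~\ref{prop:RNqf}, whereas you obtain it more directly as the restriction of $Q$ to the subspace $P_\alpha\D\subset\D$; both are valid.
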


\begin{proof}
Recall that $P_\alpha\D$ is dense in $\H^\alpha$ by Lemma~\ref{lem:denseD}.
It remains to show that $q_\alpha(\cdot)$ is closable. Let $\{\Phi_n(\alpha)\}_n\subset P_\alpha\D$ be a sequence such that $\lim_{n\to\infty}\Phi_n (\alpha)= 0$ and
$\lim_{n,m\to\infty}q_\alpha(\Phi_n(\alpha)-\Phi_m(\alpha))=0$. Since $\{\Phi_n(\alpha)\}_n$ can be extended by zero on $\A\setminus\{\alpha\}$ to a sequence $\{\widehat{\Phi}_n\}_n$
on $\D$ we also have
\[
\lim_{n,m\to\infty}Q(\widehat{\Phi}_n - \widehat{\Phi}_m )/\mu(\{\alpha\})=\lim_{n,m\to\infty}q_\alpha(\Phi_n(\alpha)-\Phi_m(\alpha))=0 \;.
\]
Now, since $\mu(\{\alpha\})>0$, by closability of $Q$ (cf.\ Definition~\ref{def:closable})
we conclude that
\[
\lim_{n\to\infty} q_\alpha(\Phi_n(\alpha))=\lim_{n\to\infty} Q(\widehat{\Phi}_n)/\mu(\{\alpha\})=0\;,
\]
hence $q_\alpha(\cdot)$ is closable.
\end{proof}

Finally we add the last hypothesis to complete the requirements of the representation theorem. We will need that the restriction of $Q$ to each $\alpha\in\A$
is strongly representable.

\begin{Henumerate}
    \setcounter{Henumeratei}{2}
    \item \label{H3}
    Let $Q\colon\D\to\R$ be a quadratic form satisfying
    \ref{H1} and \ref{H2}. Denote by $q_\alpha \colon P_\alpha\D\to \R$, $\alpha\in\A$, the family of quadratic forms of Definition~\ref{def:qalpha}.
    We assume that each of them is strongly representable by a self-adjoint operator $T_\alpha$.
\end{Henumerate}

\begin{remark} Note that if $Q$ is a Hermitian quadratic form satisfying \ref{H1}, \ref{H2} and \ref{H3}, then, for each $\alpha\in\A$
            $$
                q_\alpha\bigl(\Phi(\alpha)\bigr) = \int_{\R}\lambda \d\nu^\alpha_\Phi(\lambda)\,,\quad \Phi(\alpha) \in P_\alpha\D,
            $$
            where $\nu^\alpha_\Phi(\cdot) = \norm{E^{\alpha}(\cdot) \Phi(\alpha)}^2_\alpha$ is the positive measure on Borel sets of $\R$
            associated with the resolution of the identiy $E^\alpha(\cdot)$ of the self-adjoint operator $T_\alpha$ that strongly represents $q_\alpha$.
\end{remark}

In the next result we will integrate the family of resolutions of the identity $E^\alpha(\cdot)$ provided by a quadratic form satisfying \ref{H3} to specify a new resolution of the identity on {$\H=\int^\oplus_\A\H^\alpha\d\mu(\alpha)$} which, eventually, will be associated to
a self-adjoint operator representing the quadratic form $Q$.

\begin{proposition}\label{pro:integrateE}
Let $\{E^\alpha(\cdot)\}_{\{\alpha\in\A\}}$ be the family of resolutions of the identity associated to a quadratic form satisfying conditions \ref{H1}, \ref{H2} and \ref{H3}. For each Borel set $\sigma\subset\R$ define the projection-valued map
\[
  \sigma\mapsto E(\sigma):=\int^\oplus_\A E^\alpha(\sigma)\d\mu(\alpha)\;.
\]
The family $E(\cdot)$ is a resolution of the identity on $\H$ that defines a self-adjoint operator $T:=\int_\R\lambda dE(\lambda)$ on the domain $\D(T)= \left\{\Phi\in\H \mid \int_\R\lambda^2 \d\nu_{\Phi}(\lambda)<\infty \right\}$.
\end{proposition}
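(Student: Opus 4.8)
The plan is to verify every claim about $E(\cdot)$ fiberwise, using the family $\{E^\alpha(\cdot)\}_{\alpha\in\A}$ supplied by Theorem~\ref{thm:defspectralresolution}, and then integrate over $\A$. Because $\mu$ is point supported (hypothesis~\ref{H2}) the index set $\A$ is countable and $\Sigma(\A)=\mathcal{P}(\A)$; hence every scalar function on $\A$ is $\mu$-measurable, so $\alpha\mapsto E^\alpha(\sigma)$ is automatically a measurable field of bounded operators for each Borel set $\sigma\subset\R$, and since $\norm{E^\alpha(\sigma)}_\alpha\le 1$ uniformly in $\alpha$ the expression $E(\sigma):=\int^\oplus_\A E^\alpha(\sigma)\d\mu(\alpha)$ defines a bounded decomposable operator on $\H$ acting by $(E(\sigma)x)(\alpha)=E^\alpha(\sigma)x(\alpha)$. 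From
\[
 \norm{E(\sigma)x}^2=\int_\A\norm{E^\alpha(\sigma)x(\alpha)}^2_\alpha\,\d\mu(\alpha)\le\int_\A\norm{x(\alpha)}^2_\alpha\,\d\mu(\alpha)=\norm{x}^2
\]
one gets $\norm{E(\sigma)}\le 1$; and since adjoints and products of decomposable operators are computed fiberwise while each $E^\alpha(\sigma)$ is an orthogonal projection on $\H^\alpha$, the operator $E(\sigma)$ satisfies $E(\sigma)^*=E(\sigma)=E(\sigma)^2$, so it is an orthogonal projection on $\H$.

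Next I would check the defining properties of a resolution of the identity. The identities $E(\emptyset)=0$, $E(\R)=\1$ and $E(\sigma_1\cap\sigma_2)=E(\sigma_1)E(\sigma_2)$ follow by integrating the corresponding fiberwise identities for $E^\alpha(\cdot)$. The only property needing a real argument is countable additivity in the strong operator topology: given pairwise disjoint Borel sets $\{\sigma_n\}_{n\in\N}$ with union $\sigma$ and a fixed $x\in\H$, using that each $E^\alpha(\cdot)$ is countably additive on $\H^\alpha$ one has
\[
 \Bigl\|E(\sigma)x-\sum_{n=1}^N E(\sigma_n)x\Bigr\|^2=\int_\A\Bigl\|E^\alpha\Bigl(\bigcup_{n>N}\sigma_n\Bigr)x(\alpha)\Bigr\|^2_\alpha\,\d\mu(\alpha).
\]
The integrand is bounded above by $\norm{x(\alpha)}^2_\alpha\in\L^1(\mu)$ and tends to $0$ for each fixed $\alpha$ as $N\to\infty$, so dominated convergence forces the left-hand side to $0$. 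Hence $E(\cdot)$ is a resolution of the identity on $\H$.

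Finally, the existence of $T$ is exactly the spectral theorem for resolutions of the identity (cf.~\cite{Akhiezer93}): any such $E(\cdot)$ on a separable Hilbert space determines a unique self-adjoint operator $T=\int_\R\lambda\,\d E(\lambda)$ whose domain is $\{\Phi\in\H\mid\int_\R\lambda^2\,\d\nu_\Phi(\lambda)<\infty\}$, where $\nu_\Phi(\sigma)=\norm{E(\sigma)\Phi}^2$, which is the asserted conclusion.

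I expect the step requiring the most care to be the countable additivity, where one must interchange an infinite sum with the direct-integral integration; this is precisely what the integrable majorant $\norm{x(\alpha)}^2_\alpha$ and dominated convergence handle. If $\mu$ were not point supported one would additionally need to establish that $\alpha\mapsto E^\alpha(\sigma)$ is a genuine measurable field of operators before $E(\sigma)$ even makes sense, but here this is automatic from~\ref{H2}.
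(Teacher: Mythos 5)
Your proof is correct and follows essentially the same route as the paper: both verify the defining properties of a resolution of the identity fiberwise and use dominated convergence (with the integrable majorant $\norm{x(\alpha)}^2_\alpha$) to push the one nontrivial strong limit inside the direct integral. The only difference is cosmetic --- the paper checks the spectral-family axioms (completeness, monotonicity, right-continuity of $\lambda\mapsto E((-\infty,\lambda])$), whereas you verify countable additivity of the projection-valued measure, which is an equivalent formulation of the same condition.
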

\begin{proof}
 We have to verify the defining properties of a resolution of the identity (see, e.g., \cite[Chapters~5 and 6]{Birman-Solomjak-87}). Completeness and monotonicity follow immediately from the corresponding properties of the resolutions $E^\alpha$, $\a\in\A$, and the direct integral structure. The right continuity property, i.e.,
\[
\mathrm{s}-\lim_{\delta\searrow 0}E\big((-\infty,\lambda+\delta]\big)=E\big((-\infty,\lambda]\big)\;,\quad \lambda\in\R\;,
\]
follows from the fact that we can take the strong limit inside the direct integral by the dominated convergence theorem and because
$\norm{E^\alpha\big((-\infty,\lambda]\big)\Phi(\alpha)}_\alpha \leq \norm{\Phi(\alpha)}_\alpha$ for each $\lambda\in\R$.
 \end{proof}

\begin{theorem}\label{thm:mainthm1}
Let $Q$ be a Hermitian quadratic form defined on $\D$ which is dense in the direct integral Hilbert space $\H$ and satisfying \ref{H1}, \ref{H2} and \ref{H3}; let $T$ be the self-adjoint operator associated to the resolution of the identity $E(\cdot)$ obtained in Proposition~\ref{pro:integrateE}. Assume, in addition, that $\D\subset \D (|T|^{1/2})$.
Then $Q$ is strongly representable by $T$ (see Definition~\ref{def:strongrepresentability}). The extension of $(Q,\D)$ is given by $\bD=\D(|T|^{1/2})$ and
\[
 \bQ(\Phi) = \scalar{|T|^{1/2}\Phi}{(\operatorname{sgn}T) |T|^{1/2}\Phi}=\int_\R \lambda \;\d\nu_\Phi(\lambda) \;,\quad\Phi \in \bD\;,
\]
where $\operatorname{sgn}T : = E((0,\infty)) - E((-\infty,0))$ and
$\nu_\Phi(\cdot) = \norm{E(\cdot)\Phi}^2$.
\end{theorem}

\begin{proof}
Putting $\bD:=\overline{\D}^{\normm{\cdot}_E}$, where $\normm{\cdot}_E$ is the spectral norm (cf.\ Definition~\ref{def:spectralnorm}), it is clear that $\bD\subset \D(|T|^{1/2})$. According to Definition~\ref{def:strongrepresentability} we need to show that $\D(T)\subset\bD$, $\bD= \D(|T|^{1/2})$ and that the quadratic form $(\bQ,\bD)$ is extension of $(Q,\D)$.

To show the inclusion of the domains $\D(T)\subset\bD$ recall from assumption \ref{H3} that the quadratic forms $q_\alpha$ are strongly represented by the self-adjoint operators $T_\alpha$. Therefore
for any finite $\Delta\subset\A$ one has
\[
 P_\Delta (\D(T)) \subset \overline{P_\Delta\D}^{\normm{\cdot}_E}\subset \bD\subset\D(|T|^{1/2})\;.
\]
Now if $\Phi\in\D(T)$ and since $(\A,\mu)$ is point supported there is a sequence $\Phi_n$, $n\in\N$, such that $\Phi_n\in\overline{P_{\Delta_n}\D}^{\normm{\cdot}_E}$ for some
finite $\Delta_n\subset\A$ and such that $\normm{\Phi-\Phi_n}_E\mathop{\longrightarrow}\limits^{n\to\infty} 0$. Since $\bD$ is $\normm{\cdot}_E$-closed
we conclude that $\D(T)\subset\bD$. From the latter inclusion and since $\overline{\D(T)}^{\normm{\cdot}_E} = \D(|T|^{1/2})$ we also obtain that $\bD= \D(|T|^{1/2})$.

Finally, to show that $(\bQ,\bD)$ extends $(Q,\D)$ note first that by
Proposition~\ref{prop:RN} and the strong representability of the forms $q_\alpha$, for $\Phi\in\D$ we have that
\begin{eqnarray}\label{eq:change}
 Q(\Phi) &=& \int_{\A} \omega_\Phi(\alpha) \d\mu(\alpha) = \int_{\A} q_\alpha(\Phi(\alpha)) \d\mu(\alpha)
                                          =  \int_\A\left(\int_\R \lambda \;\d\nu_\Phi^\alpha(\lambda) \right)\;\d\mu(\alpha) \\
                                        & = & \int_\A \scalar{|T_\alpha |^{1/2}\Phi(\alpha)}{(\operatorname{sgn}T_\alpha) |T_\alpha |^{1/2}\Phi(\alpha)}_\alpha
                                             \;\d\mu(\alpha)  \, . \nonumber
\end{eqnarray}

Moreover, from \cite{nussbaum64} we have $T = \int_\A^\oplus T_\alpha\d\mu(\alpha)$ and $|T|^{1/2}= \int_\A^\oplus |T_\alpha|^{1/2}\d\mu(\alpha)$ and, therefore,
for $\Phi\in\D(|T|^{1/2})$ it is clear that
$$\scalar{|T|^{1/2}\Phi}{|T|^{1/2}\Phi} = \int_\A\scalar{|T_\alpha|^{1/2}\Phi(\alpha)}{|T_\alpha|^{1/2}\Phi(\alpha)}_\alpha\d\mu(\alpha)<\infty,$$
which implies by dominated convergence that
$$\scalar{|T|^{1/2}\Phi}{(\operatorname{sgn}T)|T|^{1/2}\Phi} = \int_\A\scalar{|T_\alpha|^{1/2}\Phi(\alpha)}{(\operatorname{sgn}T_\alpha)|T_\alpha|^{1/2}\Phi(\alpha)}_\alpha\d\mu(\alpha).$$
Since $\bD = \overline{\D}^{\normm{\cdot}_E}=\D(|T|^{1/2})$ the quadratic form $(\bQ,\bD)$ extends $(Q,\D)$.
\end{proof}

An immediate application of Theorem~\ref{thm:mainthm1} is provided in the particular instance that the quadratic forms $q_\alpha$ are semibounded since by Corollary~\ref{cor:Katorepisstrongrep} this implies \ref{H3} (see also Theorem~\ref{thm:representation-2}). This special situation appears in the examples considered in Section~\ref{sec:examples}.

\begin{corollary}\label{cor:mainthm}
    Let $Q\colon\D\to\R$ be a densely defined quadratic form with domain $\D$ satisfying \ref{H1}, \ref{H2} and such that the quadratic forms $q_\alpha$ associated to $Q$ are semibounded on $P_\alpha\D$.   Assume that $\D \subset \D(| T |^{1/2})$. Then $Q$ is strongly representable by the self-adjoint operator $T$ defined in Proposition \ref{pro:integrateE}.
\end{corollary}

Theorem~\ref{thm:mainthm1} solves the problem of determining when an orthogonally additive quadratic form defined on a direct integral Hilbert space is strongly representable (provided that conditions \ref{H1}, \ref{H2} and \ref{H3} are satisfied). However it could be extremely hard in practical applications to compute explicitly the domain $\D (|T|^{1/2})$, so that the important condition characterising strong representability, $\D \subset \D (|T|^{1/2})$, can be tested. We conclude this section by  providing an alternative description of the domain $\D (|T|^{1/2})$ that could be helpful in numerical analysis and other applications as shown in the next section, e.g., Theorem~\ref{teo:postion-op} or Theorem~\ref{thm:ginvariantoa}. The main idea will be to define a natural dense domain $\Df\subset\H$ which imposes boundedness conditions both on
$\A$ and the support of $E(\cdot)$ and that plays the role of a core for the not-semibounded quadratic form $(Q,\D)$.
This set will be shown to be dense with respect to different norms in $\bD$ and $\D(T)$.
Moreover, it turns out to be stable under the action of the projections compatible with the direct integral decomposition of the Hilbert space, i.e.,  $P_\Delta\Df \subset \Df$.
It is therefore a natural space to use for approximations for numerical methods, as in \cite{Lop17}, or
to compute the spectrum and eigenvectors associated to $\bQ$ or $T$.

\begin{definition}\label{def:dfin}
Let $Q$ be a quadratic form densely defined on $\D$ which is dense in the direct integral Hilbert space $\H$ and
satisfying \ref{H1}, \ref{H2} and \ref{H3}.
Let $\{E^{\alpha}(\cdot)\}_{\alpha\in\A}$ be the family of resolutions of the identity given in Proposition~\ref{pro:integrateE} and consider the projections
$P_\Delta$, $\Delta\in\Sigma(\A)$, and $E(\sigma)$, $\sigma\subset\R$. Define the following subspace of $\H$ by
$$\Df := \left\{ E(\sigma) P_\Delta \Phi \in \H \;\Bigl|
                \; \Phi\in\D\;, \; \mu(\Delta)<\infty\quad\mathrm{and}\quad\sigma\subset\R \text{ compact}
       \right\}\,.$$
\end{definition}

Note that for any $\Delta\in\Sigma(\A)$ and any Borel set $\sigma\subset\R$ the projections $P_\Delta$ and $E(\sigma)$ commute.

The definition of the set $\Df$ is justified by the fact that for all $\Phi \in \Df$ one has that
\begin{equation}\label{lem:dffin}
    \int_\A \int_\mathbb{R} |\lambda| \d\nu^{\alpha}_{\Phi}(\lambda) \d\mu(\alpha) < \infty\,.
\end{equation}

\begin{definition}\label{def:extensionD}
    Let $Q$ be a quadratic form densely defined on $\D$ which is dense in the direct integral Hilbert space $\H$ and
    satisfying \ref{H1}, \ref{H2} and \ref{H3}. We define a norm on $\Df$ by
    $$\normm{\Phi}^2 = \norm{\Phi}^2 + \int_\A\int_\R |\lambda| \d\nu^{\alpha}_{\Phi}(\lambda) \d \mu(\alpha)\,, \quad\Phi\in\Df \;. $$
    We denote again by $\normm{\cdot}$ the norm on the closure $\overline{\Df}^{\normm{\cdot}}$.
\end{definition}

\begin{theorem}\label{thm:mainthm2}
Let $Q$ be a quadratic form with domain $\D$ which is dense in the direct integral Hilbert space $\H$, 
satisfying \ref{H1}, \ref{H2}, \ref{H3} and such that $\D\subset \overline{\Df}^{\normm{\cdot}}$. Let $T$ be the self-adjoint operator associated to the resolution of the identity $E(\cdot)$ obtained in Proposition~\ref{pro:integrateE}. Then $(Q,\D)$ is strongly representable by $T$ and the extension coincides with $(\bQ,\bD)$ given in Theorem~\ref{thm:mainthm1}.

\end{theorem}
\begin{proof}
We begin showing that the norms $\normm{\cdot}$ and $\normm{\cdot}_E$ coincide on $\H$ (allowing the norm to take the value infinity).
In fact, for any Borel set $\sigma\subset\R$ and $\Phi\in\H$ note that
\[
 \nu_{\Phi}(\sigma)=\langle\Phi,E(\sigma)\Phi \rangle = \int_\A\langle \Phi(\alpha),E^\alpha(\sigma)\Phi(\alpha) \rangle \d\mu(\alpha)
                                                      = \int_\A \nu_\Phi^\alpha(\sigma) \d\mu(\alpha)\;.
\]
Therefore $\int_\R s(\lambda)\d\nu_\Phi(\lambda)=\int_\A\int_\R s(\lambda) \d\nu_\Phi^\alpha(\lambda) \d\mu(\alpha)$ for any simple function $s$. Choosing a lower approximation of
the function $f(\lambda)=|\lambda|$ by simple functions we conclude by monotone convergence that
\[
 \int_{\R}|\lambda|\d\nu_{\Phi}(\lambda)  = \int_\A\int_{\R}|\lambda|\d\nu^\alpha_{\Phi}(\lambda)\d\mu(\alpha),
\]
showing that the norms coincide.

Clearly $\Df \subset \D(|T|^{1/2})$. Hence $\D\subset \overline{\Df}^{\normm{\cdot}} \subset \D(|T|^{1/2})$ and the result follows from Theorem~\ref{thm:mainthm1}.

\end{proof}

\section{Examples and applications}\label{sec:examples}
In this section we will present some examples that illustrate the structures needed
for the representation theorem for non-semibounded Hermitian quadratic forms stated before.

\subsection{The position operator in Quantum Mechanics}\label{subsec:position}

The first example of a multiplication operator is in a sense prototypical, because
by the spectral theorem, any self-adjoint operator $T$ representing the quadratic form will
have a decomposition $T=\int_\R\lambda\d E(\lambda)$, for a uniquely determined resolution
of the identity $E(\cdot)$.
The main idea here is to make contact with the structures introduced in the preceding section
by considering a coarsening of $\R$ labeled by the integers $\Z$, e.g., one can consider a uniform partition
$\R=\sqcup_{k\in\Z} I_k$ with $I_k=[k, k+1)$.
Then the Hilbert space of square integrable functions on $\R$ with the Lebesgue measure has a natural decomposition
\[
\H = L^2(\R)\cong \mathop{\bigoplus}\limits_{k\in\Z}L^2(I_k) \;.
\]
Define the domain $\D$
as the space of piecewise continuous functions with compact support,
which is dense in $L^2(\R)$, and consider finally the Hermitian quadratic form on $\D$ defined by
\begin{equation}\label{eq:positionQF}
Q(\Phi,\Psi) := \int_{\R} x\;\overline{\Phi(x)}\,\Psi(x) \,\d x
              =\mathop{\sum}\limits_{k\in\Z}\int_{I_k} x\;\overline{\Phi(x)}\,\Psi(x)\, \d x \;,\quad \Phi,\Psi\in\D\;.
\end{equation}
As a measure space we consider $\A=\Z$ with $\sigma$-algebra $\Sigma(\A)=\mathcal{P}(\Z)$ given by all subsets of $\Z$ and the counting
measure $\mu\colon\mathcal{P}(\Z)\to\N_0$. Putting $\H^k = L^2(I_k, \d x)$ for $k \in \Z$ we have
$$\H \cong \int^{\oplus}_\Z \H^k \d \mu(k) = \bigoplus_{k\in\Z} \H^k\,.$$
For any $\Phi = \int_{\mathbb{Z}}^\oplus \Phi^k\d\mu(k)$ its norm satisfies
\[
\|\Phi\|^2=\int_\Z \,\norm{\Phi^k}^2_k\,\d\mu(k) = \sum_{k\in\Z} \int_{I_k}|\,\Phi^k(x)|^2\,\d x < \infty.
\]
The projection operator $P_k\colon \H \to \H^k$ can be identified with the multiplication operator by the characteristic function of the interval $I_k$, i.e., $(P_k \Phi)(x) = \mathbf{1}_{I_k}(x)\Phi(x)$. For each $k\in\Z$ the quadratic form defined in
Definition~\ref{def:qalpha} is given simply by
\[
 q_k(\Phi^ k)=Q(P_k\Phi)=\int_{I_k} x |\Phi^k(x)|^2\, \d x\;.
\]

\begin{proposition}\label{pro:positionCOA}
The quadratic form $Q$ defined in Eq.~(\ref{eq:positionQF}) on the dense domain $\D$ of piecewise continuous functions
with compact support is closable and countably orthogonally additive.
\end{proposition}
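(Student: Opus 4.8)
The plan is to verify the two properties separately, starting with closability and then deducing countable orthogonal additivity (which is equivalent to orthogonal additivity by Theorem~\ref{thm:oa=coa}, but here it is more direct to check the countable version, or even simpler to invoke Corollary~\ref{cor:symmetric}).

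\textbf{Closability.} The quickest route is to observe that $Q(\Phi,\Psi)=\scalar{\Phi}{X\Psi}$ where $X$ is the (symmetric) position operator $(X\Psi)(x)=x\Psi(x)$ acting on the domain $\D$ of piecewise continuous compactly supported functions. Since any such $\Phi\in\D$ has compact support, $x\Phi(x)\in\L^2(\R)$, so $\D\subset\dom(X)$ and $X$ is indeed symmetric on $\D$. Then closability of $Q$ follows immediately from Corollary~\ref{cor:symmetric}. Alternatively, if one prefers a self-contained argument: given $\{\Phi_n\}\subset\D$ with $\Phi_n\to 0$ in $\L^2$ and $Q(\Phi_n-\Phi_m)\to 0$, one notes $Q(\Phi_n-\Phi_m)=\int_\R x|\Phi_n(x)-\Phi_m(x)|^2\d x$ need not control $\|X(\Phi_n-\Phi_m)\|$, so one repeats the sign-alternation argument from the proof of Theorem~\ref{thm:reprimpliesclosable} verbatim, using $2\,\mathrm{Re}\scalar{\Phi_m}{X\Phi_n}\to 0$ as $m\to\infty$ with $n$ fixed. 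Invoking Corollary~\ref{cor:symmetric} is cleanest and I would use that.

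\textbf{Countable orthogonal additivity.} Stability of domain is clear: $P_\Delta\Phi=\sum_{k\in\Delta}\chi_{I_k}\Phi$ is again piecewise continuous with compact support (only finitely many $k$ contribute once we intersect with the compact support of $\Phi$), so $P_\Delta\D\subset\D$. For countable additivity, let $\{\Delta_i\}_{i\in\N}$ be a partition of $\Delta\subset\Z$ and $\Phi\in\D$. Then $P_{\Delta_i}\Phi$ is supported on $\bigsqcup_{k\in\Delta_i}I_k$, these supports are pairwise disjoint, and
$$Q(P_\Delta\Phi)=\int_{\bigsqcup_{k\in\Delta}I_k} x|\Phi(x)|^2\d x=\sum_{i=1}^\infty\int_{\bigsqcup_{k\in\Delta_i}I_k}x|\Phi(x)|^2\d x=\sum_{i=1}^\infty Q(P_{\Delta_i}\Phi),$$
where the interchange of sum and integral is justified because $\Phi$ has compact support, so only finitely many terms are nonzero and the series is trivially (absolutely) convergent. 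This establishes \eqref{eq:coa}.

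\textbf{Main obstacle.} There is essentially no obstacle here: the compact support of the domain vectors collapses every countable partition to a finite one, so the analytic subtleties present in the abstract theorems (boundedness of the series, Cauchy arguments) are absent. The only point requiring a moment's care is closability, and that is handled in one line by Corollary~\ref{cor:symmetric} once one checks $\D\subset\dom(X)$ with $X$ symmetric there. Having established both properties, Theorem~\ref{thm:oa=coa} then guarantees that $Q$ is also orthogonally additive, so all of hypotheses \ref{H1}--\ref{H3} will be available for this example in the sequel.
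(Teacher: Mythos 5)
Your proof is correct and follows essentially the same route as the paper: closability via Corollary~\ref{cor:symmetric} applied to the symmetric multiplication operator on $\D$, stability of the domain from the compact support (so $P_\Delta\Phi$ is a finite sum of $\chi_{I_k}\Phi$), and countable additivity by splitting the integral over the disjoint intervals, where compact support reduces the series to a finite sum.
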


\begin{proof}
Note that the multiplication operator is symmetric on $\D$, hence by Corollary~\ref{cor:symmetric} the quadratic form $Q$ is
closable. According to Definition~\ref{def:countableoa} we check that the domain $\D$ is stable under the projections $P_\Delta$ for
any $\Delta\subset\Z$. Since $\Phi\in\D$ has compact support it will intersect with only finitely many $I_k$, $k\in\Delta$. Therefore,
\[
 (P_\Delta\Phi)(x)=\sum_{k\in\Delta} (\mathbf{1}_{I_k}\Phi)(x)\qquad \mathrm{(finite~sum)}
\]
is also piecewise continuous with compact support, hence $P_\Delta\Phi\in\D$. Finally, for any $\Delta\in\mathcal{P}(\Z)$ consider a partition $\Delta=\sqcup_{j\in J}\Delta_j\in\mathcal{P}(\Z)$. By definition of the quadratic form we have for any $\Phi\in\D$
    \begin{align*}
    Q(P_\Delta\Phi) & = \mathop{\sum}\limits_{k\in\Delta}\int_{I_k} x\;|\Phi(x)|^2 \, \d x
              \;=\;\mathop{\sum}\limits_{j\in J} \mathop{\sum}\limits_{k_j\in\Delta_j}\int_{I_{k_j}} x \;|\Phi(x)|^2\,  \d x \\
            & = \mathop{\sum}\limits_{j\in J}  Q(P_{\Delta_j}\Phi) \;,
    \end{align*}
which shows countable orthogonal additivity.
\end{proof}

\begin{remark}
 This example already shows that the direct integral structure of the Hilbert space is highly non-unique. An alternative decomposition can be given by
 choosing the partition $I_+:=[0,\infty)$ and $I_-:=(-\infty, 0)$ (see also Lemma~2.5 in \cite{mcintosh3} where a similar decomposition is given).
\end{remark}
In the next result we show that the example satisfies all hypothesis of Theorem~\ref{thm:mainthm1} and hence we can
apply the representation theorem.

\begin{theorem}\label{teo:postion-op}
The quadratic form $Q$ defined in Eq.~(\ref{eq:positionQF}) on the dense domain $\D$ of piecewise continuous functions
with compact support satisfies $\D\subset\D(|T|^{1/2})$ as well as \ref{H1}, \ref{H2} and \ref{H3}.
It is strongly representable by the self-adjoint multiplication operator $(T\Phi)(x)=x\,\Phi(x)$ with domain
$\D(T)=\{\Phi\in\H \mid \int_\R x^2 | \phi(x)|^2<\infty \}$.
\end{theorem}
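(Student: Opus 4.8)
The plan is to verify the three hypotheses one at a time and then invoke the machinery already built. First I would check \textbf{H1}: closability of $Q$ follows immediately from Corollary~\ref{cor:symmetric}, since the multiplication operator $(T\Phi)(x)=x\Phi(x)$ is symmetric on $\D$ and $Q(\Phi)=\scalar{\Phi}{T\Phi}$; orthogonal additivity was already established in Proposition~\ref{pro:positionCOA} (which gives countable orthogonal additivity, equivalent to orthogonal additivity for closable forms by Theorem~\ref{thm:oa=coa}). Hypothesis \textbf{H2} is the observation that the counting measure $\mu$ on $\A=\Z$ with $\Sigma(\A)=\mathcal P(\Z)$ is point-supported, which is immediate. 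For \textbf{H3} one must show each $q_k\colon P_k\D\to\R$, $q_k(\Phi^k)=\int_{I_k}x|\Phi^k(x)|^2\,\d x$, is semibounded; since $I_k=[k,k+1)$ is bounded, $|x|\le |k|+1$ on $I_k$, so $|q_k(\Phi^k)|\le (|k|+1)\norm{\Phi^k}_k^2$, giving semiboundedness (in fact two-sided boundedness) of each $q_k$.

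Next I would verify the extra hypothesis needed for strong representability, namely $\D\subset\Df$ (which in particular gives $\D\subset\overline{\Df}^{\normm{\cdot}}$). Here $\Df=\{R_\sigma P_\Delta\Phi\mid\Phi\in\D,\ \mu(\Delta)<\infty,\ \sigma\subset\R\text{ compact}\}$. Given $\Phi\in\D$, since $\Phi$ has compact support it meets only finitely many intervals $I_k$, so there is a finite set $\Delta\subset\Z$ with $P_\Delta\Phi=\Phi$ and $\mu(\Delta)<\infty$. Moreover on that bounded support $\Phi$ is supported in some compact $\sigma\subset\R$, and since for each relevant $k$ the operator $T^k$ is multiplication by $x$ on $\H^k=\L^2(I_k,\d x)$, its resolution of the identity $E^k(\cdot)$ is multiplication by characteristic functions of Borel subsets of $I_k$; hence $R_\sigma\Phi=\chi_\sigma\Phi=\Phi$ for any compact $\sigma\supset\operatorname{supp}\Phi$. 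Thus $\Phi=R_\sigma P_\Delta\Phi\in\Df$.

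With \textbf{H1}--\textbf{H3} and $\D\subset\overline{\Df}^{\normm{\cdot}}$ verified, Theorem~\ref{thm:mainthm1} applies and yields that $Q$ is strongly representable by the self-adjoint operator $T=\int_\R\lambda\,\d E(\lambda)$, where $E(\sigma)=\int_\Z^\oplus E^k(\sigma)\,\d\mu(k)$ by Proposition~\ref{pro:integrateE}. It then remains to identify $T$ concretely as the position operator. Since each $E^k(\sigma)$ is multiplication by $\chi_{\sigma\cap I_k}$ on $\L^2(I_k,\d x)$, the integrated resolution $E(\sigma)$ is multiplication by $\chi_\sigma$ on $\L^2(\R,\d x)$, and therefore $T=\int_\R\lambda\,\d E(\lambda)$ is multiplication by $x$, with the standard domain $\D(T)=\{\Phi\in\H\mid\int_\R x^2|\Phi(x)|^2\,\d x<\infty\}$ coming from \eqref{eq:DT}. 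The strong-representability formula $\bQ(\Phi)=\int_\R\lambda\,\d\nu_\Phi(\lambda)=\int_\R x|\Phi(x)|^2\,\d x$ then matches the defining expression \eqref{eq:positionQF} on the diagonal.

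The main obstacle is not any single hard estimate — everything is elementary here — but rather the bookkeeping of matching the abstract objects ($P_\Delta$, $R_\sigma$, $E^\alpha$, $\Df$, the integrated $E$) with their concrete avatars as multiplication operators by characteristic functions, and in particular being careful that $\D\subset\Df$ genuinely holds (it does, precisely because domain vectors have compact support, so both the $\A$-truncation and the spectral truncation are trivial on $\D$). This is exactly the point the paper flags as the favorable case $\overline{\Df}^{\normm{\cdot}}\supset\D$, so Theorem~\ref{thm:mainthm1} rather than only Theorem~\ref{thm:mainthm2} is available.
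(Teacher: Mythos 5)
Your proposal is correct and follows essentially the same route as the paper's proof: verify \textbf{H1} via Proposition~\ref{pro:positionCOA} (and Corollary~\ref{cor:symmetric} for closability), \textbf{H2} by point-supportedness of the counting measure, \textbf{H3} by the elementary bound $|q_k(\Phi^k)|\le(|k|+1)\norm{\Phi^k}_k^2$, show $\D\subset\Df$ using compact support, invoke Theorem~\ref{thm:mainthm1}, and identify $T$ through the integrated resolution of the identity $E(\sigma)=\chi_\sigma$. The only differences are cosmetic (e.g.\ your constant $|k|+1$ versus the paper's $\max\{|k|,|k+1|\}$).
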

\begin{proof}
By Proposition~\ref{pro:positionCOA} the quadratic form $Q$ on $\D$ satisfies \ref{H1}. Moreover, by construction,
the measure space $(\Z,\mathcal{P}(\Z),\mu)$
is point supported, hence \ref{H2} is also satisfied. To check \ref{H3} consider for any $k\in\Z$ the quadratic form
$q_k\colon P_k\D\to\R$ (cf.\ Definition~\ref{def:qalpha}). Since
\[
| q_k(\Phi^k)| = |Q(P_k\Phi)| = \Big|\int_{I_k} x\;|\Phi(x)|^2 \, \d x\Big|\leq \max\{|k|,|k+1|\}\; \|\Phi^k\|^2
\]
it follows that $q_k$ is bounded so by Corollary~\ref{cor:Katorepisstrongrep} condition \ref{H3} follows. Finally, since any $\Phi\in\D$ is piecewise continuous and has compact
support it is bounded and there exist a finite $\Delta\subset\Z$ and a compact interval $\sigma\subset\R$ such that
$P_\Delta E(\sigma) \Phi=\Phi$, hence $\D\subset\Df$  (cf.\ Definition~\ref{def:dfin}) and therefore, by Theorem~\ref{thm:mainthm2}, $\D\subset\D(|T|^{1/2})$. The statement on the strong representability of $Q$ follows from the Theorem~\ref{thm:mainthm1}.

It remains to show that the representing operator coincides with the multiplication operator $T$. The representing operator
of the extension $\bQ$ on $\bD$ is the unique self-adjoint operator associated with the resolution of the
identity $E$ constructed in Proposition~\ref{pro:integrateE}. In our example, for any $k\in\Z$ the bounded quadratic forms
$q_k$ are represented by the multiplication operator by $x$ on the interval $I_k$, hence the associated resolution of the identity
is given by multiplication with the characteristic function relative to the interval $I_k$, i.e.,
$E^k(\sigma)\Phi^k=\mathbf{1}_{\sigma\cap I_k}\Phi^k(x)$, $\sigma\subset\R$. By Proposition~\ref{pro:integrateE} we have
\[
 E(\sigma)\cong\bigoplus_{k\in\Z} E^k(\sigma)=\mathbf{1}_\sigma\;.
\]
which is the resolution of the identity of $T$.
\end{proof}

We can now invoke the spectral theorem to prove a certain converse implication to Theorem~\ref{thm:mainthm1}:

\begin{theorem}\label{thm:reverseimplication}
    Let $\D\subset\H$ be dense and $Q\colon\D\to\R$ be a closable quadratic form and strongly representable by a self-adjoint operator $T$ with resolution of the identity $E(\cdot)$. Assume that $Q=\bQ$ (and $\D =\bD$). Then there exists a measure space $(\mathbb{Z}, \mathcal{P}(\mathbb{Z}), \mu)$ and a decomposition of $\H$ as a direct integral such that $Q$ satisfies \ref{H1}, \ref{H2} and \ref{H3}.
\end{theorem}

\begin{proof}
    We will partition $\mathbb{R}$ as in the example, i.e., $\R=\sqcup_{k\in\mathbb{Z}} I_k,$ with $I_k = [k, k+1)$. Let $\H^k = \operatorname{ran}E(I_k)$. From the properties of the resolution of the identity we have that
    $$
        \H = \bigoplus_{k\in\mathbb{Z}}\H^k = \int^\oplus_{\mathbb{Z}}\H^k \d\mu,
    $$
    where $\mu$ is the counting measure and we take as $\sigma$-algebra $\Sigma(\mathbb{Z}) = \mathcal{P}(\mathbb{Z})$. This proofs \ref{H2}. To show condition \ref{H1} it
    is enough to show orthogonal additivity (cf.\ Theorem~\ref{thm:oa=coa}). Hence by Definition~\ref{def:oa} we have to show stability of the domain, $\Sigma$-boundedness and additivity.
    Since by strong representability $\D(T)\subset\bD=\D\subset\D(|T|^{1/2})$ it is enough to notice that for any $\Delta\in\mathcal{P(\mathbb{Z})}$ such that $\mu(\Delta)<\infty$ and any $\xi\in\bD$ one has that $P_\Delta\xi\in\D(T)$ and stability follows by monotone convergence.
    For any $\Phi\in\bD$ we have that
    $$
        |Q(P_\Delta\Phi)| = \left| \int_\Delta \lambda\d\nu_{\Phi}(\lambda) \right|\leq  \int_\Delta |\lambda|\d\nu_{\Phi}(\lambda) \leq  \int_\R |\lambda|\d\nu_{\Phi}(\lambda) := M_\Phi < \infty.
    $$
    Since $M_\Phi$ does not depend on $\Delta$, this shows $\Sigma$-boundedness.     Finally, for any $\Delta\in\mathcal{P}(\Z)$ consider a finite partition $\Delta=\sqcup_{j=1}^N\Delta_j\in\mathcal{P}(\Z)$. For any $\Phi\in\bD$ we have
    $$
        Q(P_\Delta\Phi) =\lim_{n\to\infty} \sum_{%
            {{k\in\Delta} \atop {|k| < n}}} \int_{I_k} \lambda \d \nu_{\Phi} (\lambda)
            =\sum_{j=1}^N\lim_{n\to\infty} \sum_{%
            {{k_j \in \Delta_j} \atop {|k_j| < n}}} \int_{I_{k_j}} \lambda \d \nu_{\Phi} (\lambda)
            = \sum_{j=1}^N Q(P_{\Delta_j}\Phi),
    $$
    where we have used dominated convergence. Finally, note that the quadratic forms $q_k\colon P_k\D\to \R$ are bounded and, therefore, \ref{H3} follows from
    Corollary~\ref{cor:Katorepisstrongrep}.
\end{proof}

Notice that, in the case that $Q$ is strongly representable, but one has that $Q \neq \bQ$
it is not true in general that the stability of the domain $\D$ holds.

\begin{corollary}\label{cor:reverseimplication}
    Let $T$ be a self-adjoint operator with resolution of the identity $E(\cdot)$ and domain
    $$
        \D(T) = \left\{\Phi \in \H \mid \int_\R \lambda^2 \d\nu_\Phi(\lambda) < \infty\right\}.
    $$
    Let $Q$ be the quadratic form densely defined on $\D(T)$ given by
    $$
        Q(\Phi) = \scalar{\Phi}{T\Phi}.
    $$
    Then $Q$ is closable, strongly representable and there exists a measure space $(\Z,\mathcal{P}(\Z),\mu)$
and a decomposition of $\H$ as a direct integral such that $Q$ satisfies \ref{H1}, \ref{H2} and \ref{H3}.
\end{corollary}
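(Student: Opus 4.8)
The plan is to deduce this statement directly from Theorem~\ref{thm:reverseimplication}, after first checking that the quadratic form $Q(\Phi)=\scalar{\Phi}{T\Phi}$ with domain $\D(T)$ is closable and strongly representable with $Q=\bQ$; these are exactly the hypotheses of that theorem. Closability is immediate from Theorem~\ref{thm:reprimpliesclosable} (or Corollary~\ref{cor:symmetric}), since $T$ is self-adjoint, hence symmetric, on $\D(T)$.

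For strong representability, recall (Definition~\ref{def:strongrepresentability}) that I must exhibit a self-adjoint operator representing $Q$ together with an extension $\bQ$ whose domain is precisely $\{\Phi\in\H\mid\int_\R|\lambda|\,\d\nu_\Phi(\lambda)<\infty\}$ and on which $\bQ(\Phi)=\int_\R\lambda\,\d\nu_\Phi(\lambda)$. The natural candidate is $T$ itself, with $\bQ$ defined by the integral formula on $\bD:=\{\Phi\in\H\mid\int_\R|\lambda|\,\d\nu_\Phi(\lambda)<\infty\}$. First I would note $\D(T)\subset\bD$, since $\int_\R|\lambda|\,\d\nu_\Phi\le\norm{\Phi}^2+\int_\R|\lambda|^2\,\d\nu_\Phi<\infty$ for $\Phi\in\D(T)$ by the characterisation \eqref{eq:DT}. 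Then on $\D(T)$ one has $Q(\Phi)=\scalar{\Phi}{T\Phi}=\int_\R\lambda\,\d\nu_\Phi(\lambda)$ by the spectral theorem, so $\bQ|_{\D(T)}=Q$ and $\bQ$ is a genuine extension in the sense of Definition~\ref{def:extension}; in particular $T$ represents $Q$ in the sense of Definition~\ref{def:weakrepresentable}, and since $\bD$ and $\bQ$ have exactly the form required, $Q$ is strongly representable. Moreover, with these choices $Q=\bQ$ fails in general (the domains differ), so a small extra remark is needed: Theorem~\ref{thm:reverseimplication} as stated assumes $Q=\bQ$, but inspecting its proof one sees that what is actually used is only that $\bD=\{\Phi\mid\int_\R|\lambda|\,\d\nu_\Phi<\infty\}$ together with $P_\Delta\bD\subset\bD$; the stability argument, the $\Sigma$-boundedness estimate, and the additivity computation are all carried out on $\bD$ and automatically restrict to $\D(T)\subset\bD$ once one knows $P_\Delta\D(T)\subset\D(T)$.

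So the one genuine point to verify is stability of $\D(T)$ itself under the projections $P_\Delta$: for $\Delta\in\mathcal{P}(\Z)$ and $\Phi\in\D(T)$, writing $P_\Delta\Phi=E(\sqcup_{k\in\Delta}I_k)\Phi$ and using $\nu_{P_\Delta\Phi}(\sigma)=\nu_\Phi(\sigma\cap\sqcup_{k\in\Delta}I_k)$, one gets $\int_\R|\lambda|^2\,\d\nu_{P_\Delta\Phi}(\lambda)\le\int_\R|\lambda|^2\,\d\nu_\Phi(\lambda)<\infty$, so $P_\Delta\Phi\in\D(T)$. With this in hand, the three remaining verifications (stability, $\Sigma$-boundedness with $M_\Phi=\int_\R|\lambda|\,\d\nu_\Phi(\lambda)$, and finite additivity via dominated convergence) are identical, word for word, to those in the proof of Theorem~\ref{thm:reverseimplication}, now performed on $\D(T)$ in place of $\bD$. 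Together with \ref{H2} and \ref{H3}, which follow from the decomposition $\H=\oplus_{k\in\Z}\operatorname{ran}E(I_k)$ exactly as there, this gives \ref{H1}--\ref{H3}. The only mild obstacle is the bookkeeping needed to confirm that Theorem~\ref{thm:reverseimplication}'s argument does not really require $Q=\bQ$ but only the stated form of $\bD$; once that is observed the corollary is essentially immediate.
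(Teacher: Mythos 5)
Your proposal is correct and follows essentially the same route as the paper: closability via Theorem~\ref{thm:reprimpliesclosable}, strong representability by taking $\bQ(\Phi)=\int_\R\lambda\,\d\nu_\Phi(\lambda)$ on $\bD=\{\Phi\mid\int_\R|\lambda|\,\d\nu_\Phi(\lambda)<\infty\}\supset\D(T)$, then transferring \ref{H2}, \ref{H3}, $\Sigma$-boundedness and additivity from $\bQ$ via the construction of Theorem~\ref{thm:reverseimplication}, and finally checking stability of $\D(T)$ separately with the estimate $\int_\R|\lambda|^2\,\d\nu_{P_\Delta\Phi}(\lambda)\leq\int_\R|\lambda|^2\,\d\nu_\Phi(\lambda)$. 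Your explicit remark that Theorem~\ref{thm:reverseimplication} is applied to $\bQ$ rather than to $Q$ itself (since $Q\neq\bQ$ in general) makes precise a point the paper leaves implicit, but the argument is the same.
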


\begin{proof}
    By Theorem~\ref{thm:reprimpliesclosable} the quadratic form $Q$ is closable and putting $\bD := \overline{\D(T)}^{\normm{\cdot}_E}$
    it is clear that $\D(T)=\D\subset\bD$ and $\D=\D(T)\subset \D(|T|^{1/2})$.
    As in Theorem~\ref{thm:mainthm1} on can show that $Q$ can be extended to a form \hbox{$\bQ(\Phi) = \int_{\R}\lambda\d\nu_{\Phi}(\lambda)$} defined on $\bD$ and hence $Q$ is strongly representable.
    Using an analogous coarsening as in Theorem~\ref{thm:reverseimplication} but now on the measure space associated to the spectral resolution of the operator $T$ it is straightforward to show that $\bQ$ and therefore $Q$ satisfies \ref{H2} and \ref{H3}. We still have to show orthogonal additivity of $Q$. The properties of $\Sigma$-boundedness and additivity of $Q$ follow from those of $\bQ$, which hold by the Theorem~\ref{thm:reverseimplication}. Finally, we need to show stability of domain: let $\Phi\in\D(T)$ and $\Delta\in\mathcal{P}(\Z)$. Since $P_{\Delta}\Phi = E(\sqcup_{k\in\Delta}I_k)\Phi$ we have that
    $$\int_{\R}\lambda^2 \d\nu_{P_{\Delta}\Phi}(\lambda) = \int_{\sqcup_{k\in\Delta}I_k}\lambda^2 \d\nu_{\Phi}(\lambda) \leq \int_{\R}\lambda^2 \d\nu_{\Phi}(\lambda) < \infty,$$
    which concludes the proof.
\end{proof}

\begin{remark}
    We have seen in Theorem~\ref{thm:mainthm1} that conditions \textbf{H1}, \textbf{H2} and \textbf{H3} give a sufficient condition for a (non-semibounded) quadratic form $(Q,\D)$ to be strongly representable by a self-adjoint operator $T$. The extension of the quadratic form $(\bQ,\bD)$ has then an integral representation as given in Definition~\ref{def:strongrepresentability}. Now Theorem~\ref{thm:reverseimplication} and Corollary~\ref{cor:reverseimplication} show that  if a quadratic form is strongly representable, then the integral representation of $T=\int_\R\lambda \d E(\lambda)$ guaranteed by the spectral theorem naturally gives an integral representation of the extended quadratic form $(\bQ,\bD)$. One can then construct a decomposition of the Hilbert space and verify conditions \textbf{H1}, \textbf{H2} and \textbf{H3} in this case.
\end{remark}

\subsection{Unitary representation of groups}\label{subsec:groups}

The theory of unitary representations of groups on Hilbert spaces is an important situation in which direct
integral decompositions appear naturally. In this framework, the notion of commutant of a family of bounded operators is particularly useful.
If $\mathcal{S}$ is a self-adjoint subset of $\mathcal{L}(\H)$ (the set of bounded linear operators on the Hilbert space $\H$),
we denote by $\mathcal{S}'$ the commutant of $\mathcal{S}$
in $\mathcal{L}(\H)$, i.e., the set of all operators in
$\L(\H)$ commuting with all elements in $\mathcal{S}$. It is a consequence of von Neumann's bicommutant theorem that
$\mathcal{S}'$ is a von Neumann algebra and that the corresponding
bicommutant $\mathcal{S}'':=(\mathcal{S}')'$ is the smallest von Neumann algebra
containing $\mathcal{S}$.
We refer, e.g., to Sections~4.6 and 5.2 of \cite{Pedersen89} for additional motivation and proofs.
Let $V\colon G\to \mathcal{U}(\H)$ be a unitary representation; consider the following von Neumann algebras associated with
this representation
\[
 \mathcal{V}:=\{V(g) \mid g\in G\}''\quad\mathrm{and}\quad
 \mathcal{V}'=\{M\in\mathcal{L}(\H)\mid  V(g)M=MV(g)\;,\;g\in G\} \;.
\]
It is shown in Section~2.4 of \cite{mackey:76} that any unitary representation of
a separable locally compact group $G$ is a direct integral of primary representations. A representation is primary if $\mathcal{V}\cap\mathcal{V}'=\mathbb{C}\1$.  If, in addition, $G$ is of type~I
(e.g., if any subrepresentation of $V$ contains an irreducible subrepresentation), then the direct integral
decomposition is essentially unique and
one can take as measure space $\A=\widehat{G}$, the dual of $G$, i.e., the set of all unitary equivalence classes of
irreducible representations.

To combine these results on unitary representations with our analysis of quadratic forms
we have to adapt the notion of $G$-invariance of quadratic forms developed in Section~4 of
\cite{Ib14b} to the underlying structure of direct integrals.

\begin{definition}\label{def:ginv}
    Let $Q$ be a closable, Hermitian quadratic form on $\D$ which is dense in the direct integral Hilbert space $\H$.
    Let $G$ be a separable locally compact group
    with a unitary representation $V$ on the Hilbert space $\H$. We will say that the quadratic form is \textbf{$G$-invariant} if
    \begin{enumerate}
        \item {\em Stability of domain:} For $\Delta\in\Sigma(\A)$ one has that $P_\Delta\D \subset \D$ \label{item:ginv1}
        \item {\em Boundedness on finite measure sets:}
            For any $\Delta \in \Sigma(\A)$ with $\mu(\Delta)<\infty$ there exists a $M_\Delta>0$ such that
            $$|Q(P_\Delta\Phi)| \leq M_{\Delta} \norm{P_\Delta \Phi}^2 \text{ for all } \Phi\in\D.$$ \label{item:ginv2}
        \item {\em $\Sigma$-boundedness:} For all $\Phi\in\D$ there exists $M_\Phi >0$ such that
            $$|Q(P_\Delta\Phi)| < M_\Phi \text{ for all }\Delta \in \Sigma(\A).$$ \label{item:ginv3}
        \item {\em Invariance of the domain and the form:} $V(g)\D=\D$ and $Q(V(g)\Phi) = Q(\Phi)$ for all $g\in G$ and $\Phi \in \D$. \label{item:ginv4}
    \end{enumerate}
\end{definition}

Note that condition (\ref{item:ginv2}) is only required for sets $\Delta$ with finite measure, hence
it {\em does not} imply that the quadratic form is bounded. Moreover, by polarisation (cf.\ Eq.~(\ref{eq:quadratic-polarization})),
invariance already implies that the sesquilinear form is also invariant, i.e., $Q(V(g)\Phi,V(g)\Psi)=Q(\Phi,\Psi)$, $g\in G$,
$\Phi,\Psi\in\D$.

For the next result we need to recall the notion of disjoint representations. Two unitary representations
$U,V\colon G\to \mathcal{U}(\H)$ are called disjoint if the set of intertwining operators is trivial, i.e., if
\[
 (U,V):=\{ M\in\mathcal{L}(\H)\mid  MU(g)=V(g)M\;,\; g\in G \} =\{0\}
 \;.
\]

For the next result we need to be more specific of how the measure space appears in the decomposition of $V$. We will
follow here von Neumann's central decomposition. Denote by
\[
 \mathcal{Z}=\mathcal{V}\cap\mathcal{V}'
\]
the centre of the von Neumann algebra $\mathcal{V}$. Then there is a separable Hausdorff space
$\A$ and a regular Borel measure $\mu$ on $\A$ such that $\mathcal{Z}\cong L^\infty(\A,\mu)$ (cf.\ \cite[Theorem 14.8.14]{wallach:92}).

\begin{lemma}\label{lem:disjoint}
 Let $V\colon G\to \mathcal{U}(\H)$ be a unitary representation and consider its central decomposition mentioned above (cf.\ \cite[Theorem 2.9]{mackey:76} or \cite[Corollary 14.9.5]{wallach:92}), i.e.,
 \begin{equation}\label{eq:decompV}
        V = \int^\oplus_\A V^\alpha \d\mu(\alpha)\;.
 \end{equation}
Then, for any pair $\Delta_1,\Delta_2\in\Sigma(\A)$ with
$\Delta_1\cap\Delta_2=\emptyset$, we have
\[
 (V_{\Delta_1},V_{\Delta_2})=\{0\} \;,
\]
where $V_{\Delta_i}= \int^\oplus_{\Delta_i} V^\alpha \d\mu(\alpha)$, $i=1,2$, denote the corresponding subrepresentations.
\end{lemma}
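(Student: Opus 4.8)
The plan is to exploit the fact that the central decomposition \eqref{eq:decompV} diagonalizes the center $\mathcal{Z}=\mathcal{V}\cap\mathcal{V}'$, which under the isomorphism $\mathcal{Z}\cong L^\infty(\A,\mu)$ sends the characteristic function $\chi_{\Delta}$ to the projection $P_\Delta=\int^\oplus_\A \chi_\Delta(\alpha)\,\1_{\H^\alpha}\,\d\mu(\alpha)$. Since $P_\Delta\in\mathcal{Z}\subset\mathcal{V}'$, each $P_\Delta$ commutes with every $V(g)$, so $V_{\Delta_i}=P_{\Delta_i}V P_{\Delta_i}$ is genuinely a subrepresentation of $V$ on $\H_{\Delta_i}$.

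First I would take an arbitrary intertwiner $M\in(V_{\Delta_1},V_{\Delta_2})$, i.e.\ a bounded operator $M\colon\H_{\Delta_1}\to\H_{\Delta_2}$ with $MV_{\Delta_1}(g)=V_{\Delta_2}(g)M$ for all $g\in G$. I would extend $M$ to an operator $\widetilde M$ on all of $\H$ by setting $\widetilde M:=P_{\Delta_2}M P_{\Delta_1}$ (viewing $M$ as acting between the corresponding subspaces of $\H$); concretely $\widetilde M = \iota_{\Delta_2}\circ M\circ P_{\Delta_1}$ where $\iota_{\Delta_2}$ is the inclusion $\H_{\Delta_2}\hookrightarrow\H$. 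The key computation is then to check that $\widetilde M\in\mathcal{V}'$: using that $P_{\Delta_1},P_{\Delta_2}$ commute with $V(g)$ and that $M$ intertwines the subrepresentations, one gets $\widetilde M V(g) = P_{\Delta_2}MP_{\Delta_1}V(g) = P_{\Delta_2}M V(g)P_{\Delta_1} = P_{\Delta_2}V(g)M P_{\Delta_1} = V(g)P_{\Delta_2}M P_{\Delta_1} = V(g)\widetilde M$, so indeed $\widetilde M\in\mathcal{V}'$.

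Next I would use the disjointness of supports. Since $\Delta_1\cap\Delta_2=\emptyset$, the projections $P_{\Delta_1}$ and $P_{\Delta_2}$ are orthogonal: $P_{\Delta_1}P_{\Delta_2}=P_{\Delta_1\cap\Delta_2}=P_\emptyset=0$ (using $\mu(\emptyset)=0$ and Proposition~\ref{prop:projections}). Now I want to conclude $\widetilde M=0$. The cleanest route is to observe that since $\widetilde M\in\mathcal{V}'$ and $\mathcal{V}'$ is a von Neumann algebra containing the center $\mathcal{Z}$, and since $P_{\Delta_1},P_{\Delta_2}\in\mathcal{Z}$, these central projections commute with $\widetilde M$. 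But $\widetilde M = P_{\Delta_2}\widetilde M P_{\Delta_1}$ by construction, so $\widetilde M = P_{\Delta_2}\widetilde M P_{\Delta_1} = \widetilde M P_{\Delta_2}P_{\Delta_1} = \widetilde M\cdot 0 = 0$, using centrality of $P_{\Delta_2}$ to move it past $\widetilde M$ and then orthogonality. Tracing back, $M = P_{\Delta_2}\widetilde M P_{\Delta_1}$ viewed on the subspaces is also zero, hence $(V_{\Delta_1},V_{\Delta_2})=\{0\}$.

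I expect the main (really the only) obstacle to be bookkeeping: carefully distinguishing operators on the subspaces $\H_{\Delta_i}$ from their ampliations to $\H$, and justifying that $P_\Delta\in\mathcal{Z}$ under the identification $\mathcal{Z}\cong L^\infty(\A,\mu)$ coming from the central decomposition — this last point is exactly the defining property of von Neumann's central decomposition, so it may be invoked rather than proved. One should also note that the measurability/boundedness of $\widetilde M$ as a decomposable operator is automatic since $M$ is bounded and the $P_{\Delta_i}$ are decomposable projections. No deep input beyond the structure of the central decomposition and the bicommutant theorem is needed.
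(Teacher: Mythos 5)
Your proof is correct, and it rests on exactly the same key input as the paper's: the fact that in the central decomposition the projections $P_{\Delta_i}$ correspond to characteristic functions in $L^\infty(\A,\mu)\cong\mathcal{Z}$ and are therefore central. The difference is in how that fact is exploited. The paper simply cites Theorem~1.3 of Chapter~1 in Mackey's book to get $(V_{\Delta_1},V_{\Delta_1^c})=\{0\}$ and then observes that $V_{\Delta_2}$ is a subrepresentation of $V_{\Delta_1^c}$, so any intertwiner in $(V_{\Delta_1},V_{\Delta_2})$ is a fortiori trivial. You instead unpack what that citation is really doing: you ampliate an intertwiner $M$ to $\widetilde M=\iota_{\Delta_2}\circ M\circ P_{\Delta_1}\in\mathcal{V}'$, use that $P_{\Delta_2}\in\mathcal{Z}\subset\mathcal{V}$ commutes with every element of $\mathcal{V}'$ to slide it past $\widetilde M$, and kill $\widetilde M$ with $P_{\Delta_2}P_{\Delta_1}=0$. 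This buys you a self-contained argument that handles an arbitrary disjoint pair $\Delta_1,\Delta_2$ directly, without the reduction to a set and its complement and without the external reference; the paper's version is shorter but outsources the real content to Mackey. Your computation that $\widetilde M\in\mathcal{V}'$ and the identity $\widetilde M=P_{\Delta_2}\widetilde M P_{\Delta_1}$ are both sound, and, as you note, the only care needed is the bookkeeping between operators on $\H_{\Delta_i}$ and their ampliations to $\H$.
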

\begin{proof}
Let $\Delta_i$, $i=1,2$, be as above. Then the corresponding projections $P_{\Delta_i}$ can be identified with characteristic
functions in $L^\infty(\A,\mu)\cong\mathcal{Z}$, hence they are central. Therefore by Theorem~1.3 in \cite[Chapter~1]{mackey:76}
it follows  that  $(V_{\Delta_1},V_{\Delta_1^c})=\{0\}$, i.e., they are disjoint. Since $V_{\Delta_2}$ is a subrepresentation of  $V_{\Delta_1^c}$ the statement follows.
\end{proof}

In \cite[Theorem 4.2]{Ib14b} we relate the $G$-invariance of closed, semibounded quadratic forms with that of a representing operator $T$. In \cite[Corollary 5.10]{Ib14b} we show that if an operator $T$ is unbounded and $G$-invariant for a representation $V$ then the representation cannot be a finite direct sum of finitely many irreducible representations.
In the next result we consider a direct integral decomposition of $V$, where the ``pieces'' of the decomposition
are mutually disjoint.

\begin{theorem}\label{thm:ginvariantoa}
Let $G$ be a separable locally compact group and let $V$ be a unitary representation acting on a Hilbert
space $\H$ and consider its central decomposition like in Eq.~\eqref{eq:decompV} with $\mu$ a $\sigma$-finite positive measure.
Let $Q$ be a closable Hermitian quadratic form, densely defined on $\D_0\subset\H$, where for any $\Phi\in\D_0$ there
exist a $\Delta\in\Sigma(\A)$ with $\mu(\Delta)<\infty$ and $P_\Delta\Phi=\Phi$.
\begin{itemize}
 \item[(i)] If $Q$ is $G$-invariant, then $Q$ is countably orthogonally additive.
 \item[(ii)] If, in addition, the measure $\mu$ is point supported on $(\A,\Sigma(\A))$, then the $G$-invariant quadratic form
 $Q$ is strongly representable.
\end{itemize}

\end{theorem}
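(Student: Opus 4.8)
The plan is to reduce both statements to tools already developed. For part~(i) I would verify the three conditions of Definition~\ref{def:oa} and then invoke Corollary~\ref{cor:oaimpliescoa}; for part~(ii) I would check \ref{H1}--\ref{H3} together with $\D_0\subseteq\Df$ and apply Theorem~\ref{thm:mainthm1}. Conditions~(\ref{item:oa1}) and~(\ref{item:oa2}) of orthogonal additivity are precisely~(\ref{item:ginv1}) and~(\ref{item:ginv3}) of $G$-invariance, so the whole weight of part~(i) rests on the additivity~(\ref{item:oa3}), which I would deduce from the orthogonality relation
$$
Q(P_{\Delta_1}\Phi,P_{\Delta_2}\Psi)=0\,,\qquad \Delta_1\cap\Delta_2=\emptyset,\quad \Phi,\Psi\in\D_0 .
$$
Granting this, for a finite partition $\{\Delta_i\}_{i=1}^N$ of $\Delta\in\Sigma(\A)$ one has $P_\Delta\Phi=\sum_{i=1}^N P_{\Delta_i}\Phi$, so expanding $Q(P_\Delta\Phi)=\sum_{i,j}Q(P_{\Delta_i}\Phi,P_{\Delta_j}\Phi)$ and discarding the off-diagonal terms yields $Q(P_\Delta\Phi)=\sum_{i=1}^N Q(P_{\Delta_i}\Phi)$; thus $Q$ is orthogonally additive, and being closable it is countably orthogonally additive by Corollary~\ref{cor:oaimpliescoa}. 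This proves~(i).

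To establish the orthogonality relation I would localise. For $\Phi,\Psi\in\D_0$ the hypothesis on $\D_0$ gives a $\Delta_0\in\Sigma(\A)$ with $\mu(\Delta_0)<\infty$, $P_{\Delta_0}\Phi=\Phi$ and $P_{\Delta_0}\Psi=\Psi$; replacing $\Delta_i$ by $\Delta_i\cap\Delta_0$ one may assume $\Delta_1,\Delta_2\subseteq\Delta_0$, and then $P_{\Delta_1}\Phi,P_{\Delta_2}\Psi$ lie in $P_{\Delta_0}\D_0$. By Lemma~\ref{lem:denseD} the space $P_{\Delta_0}\D_0$ is dense in $\H_{\Delta_0}$, and by~(\ref{item:ginv2}) the Hermitean form $Q$ is bounded on it, hence extends by continuity to a bounded self-adjoint operator $T_0$ on $\H_{\Delta_0}$ with $Q(\xi,\eta)=\scalar{\xi}{T_0\eta}$. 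Since $P_{\Delta_0}$ corresponds to a characteristic function in $L^\infty(\A,\mu)\cong\mathcal{Z}$, it is central; therefore each $V(g)$ leaves $\H_{\Delta_0}$ invariant and restricts there to the subrepresentation $V_{\Delta_0}$, and invariance of the domain and of $Q$ under $V$ forces $\scalar{V_{\Delta_0}(g)\xi}{T_0V_{\Delta_0}(g)\eta}=\scalar{\xi}{T_0\eta}$, i.e.\ $T_0$ commutes with every $V_{\Delta_0}(g)$. Writing $\H_{\Delta_0}=\H_{\Delta_1}\oplus\H_{\Delta_2}\oplus\H_{\Delta_3}$ with $\Delta_3=\Delta_0\setminus(\Delta_1\cup\Delta_2)$, the three projections are central and $V_{\Delta_0}$ is block-diagonal, so the block of $T_0$ from $\H_{\Delta_2}$ to $\H_{\Delta_1}$ intertwines $V_{\Delta_2}$ with $V_{\Delta_1}$; since $\Delta_1$ and $\Delta_2$ are disjoint, Lemma~\ref{lem:disjoint} forces this block to vanish, which is exactly $Q(P_{\Delta_1}\Phi,P_{\Delta_2}\Psi)=\scalar{P_{\Delta_1}\Phi}{T_0P_{\Delta_2}\Psi}=0$. (Equivalently, $P_{\Delta_2}$ being central commutes with $T_0$, and $P_{\Delta_1}P_{\Delta_2}=0$ gives the same conclusion.)

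For part~(ii): \ref{H1} is part~(i) together with closability and density of $\D_0$, and \ref{H2} is the point-support assumption. Since $\mu$ is point-supported and $\sigma$-finite on the countable set $\A$, every singleton $\{\alpha\}$ has finite measure, so~(\ref{item:ginv2}) makes the form $q_\alpha$ of Proposition~\ref{pro:sector} bounded on $P_\alpha\D_0$, hence semibounded; this is \ref{H3}, and Theorem~\ref{thm:defspectralresolution} then supplies the operators $T^\alpha$ and the resolutions $E^\alpha$. Finally I would verify $\D_0\subseteq\Df$: given $\Phi\in\D_0$ pick $\Delta$ with $\mu(\Delta)<\infty$ and $P_\Delta\Phi=\Phi$, and let $M_\Delta$ be the constant of~(\ref{item:ginv2}); for any $\alpha\in\Delta$ and any $\Psi\in\D_0$, the vector $P_\alpha\Psi\in\D_0$ is supported on $\{\alpha\}\subseteq\Delta$, so~(\ref{item:ginv2}) applied to $\Delta$ gives $|q_\alpha(P_\alpha\Psi)|=|Q(P_\Delta(P_\alpha\Psi))|<M_\Delta\norm{P_\alpha\Psi}^2$. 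Hence the representing operator $T^\alpha$ is bounded with $\norm{T^\alpha}\le M_\Delta$, so $E^\alpha$ is supported in the compact set $\sigma:=[-M_\Delta,M_\Delta]$ for every $\alpha\in\Delta$; consequently $R_\sigma\Phi=\Phi$ (trivially where $\Phi$ vanishes), i.e.\ $\Phi=R_\sigma P_\Delta\Phi\in\Df$. Theorem~\ref{thm:mainthm1} then yields that $Q$ is strongly representable.

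The main obstacle is the orthogonality relation of part~(i): it is the only place where disjointness of the pieces of the central decomposition (Lemma~\ref{lem:disjoint}) genuinely interacts with the invariance of $Q$, and the delicate point is the descent to the finite-measure subspace $\H_{\Delta_0}$ --- confirming that the relevant vectors lie in $P_{\Delta_0}\D_0$, that $V$ restricts to $\H_{\Delta_0}$, and that the projections $P_{\Delta_i}$ are central in the restricted representation.
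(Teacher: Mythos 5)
Your proof is correct and follows essentially the same route as the paper: the heart of part~(i) is showing that the off-diagonal part of $Q$ over disjoint finite-measure sets defines a bounded intertwiner between the disjoint subrepresentations $V_{\Delta_1}$ and $V_{\Delta_2}$, which must vanish by Lemma~\ref{lem:disjoint} --- the paper builds this intertwiner directly by Riesz representation of the functional $\Phi_2\mapsto Q(P_{\Delta_1}\Psi,\Phi_2)$, while you obtain it as the off-diagonal block of the bounded self-adjoint operator $T_0$ representing the localized form, which is the same operator in different packaging. Part~(ii) likewise matches the paper's argument (verifying \ref{H1}--\ref{H3} and $\D_0\subseteq\Df$, then invoking Theorem~\ref{thm:mainthm1}, with your check via $\norm{T^\alpha}\leq M_\Delta$ filling in details the paper leaves terse), and your parenthetical remark that centrality of $P_{\Delta_2}$ already forces $P_{\Delta_1}T_0P_{\Delta_2}=0$ is a small shortcut that bypasses Lemma~\ref{lem:disjoint}.
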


\begin{proof}
(i) By Theorem~\ref{thm:oa=coa} it is enough to prove orthogonal additivity.
Moreover, we only have to show the additivity property (\ref{item:oa3}) of Definition~\ref{def:oa}
since conditions (\ref{item:oa1}) and (\ref{item:oa2}) follow directly from the definition of $G$-invariance.
It is enough to show that for any pair $\Delta_1, \Delta_2 \in \Sigma(\A)$
with $\Delta_1 \cap \Delta_2 = \emptyset$ and $\mu(\Delta_i) <\infty$, $i=1,2$, one has that
$$Q(P_{\Delta_1}\Phi,P_{\Delta_2}\Psi) = 0, \; \forall \,\Phi, \Psi \in \D_0.$$

We will use next the $G$-invariance of the quadratic form $Q$ to define an intertwiner between
the subrepresentations $V_{\Delta_i}:=V P_{\Delta_i}$ acting on the Hilbert spaces $\H_i:=P_{\Delta_i}\H$, $i=1,2$.
Consider for a fixed $\Psi\in\D_0$ the linear map
\[
 Q_{\Psi,12}\colon P_{\Delta_2}\D\subset\H_2\to\CC\;,\quad Q_{\Psi,12}(P_{\Delta_2}\Phi):=Q(P_{\Delta_1}\Psi, P_{\Delta_2}\Phi)\;.
\]
By polarisation and since $\mu(\Delta_1\sqcup\Delta_2)=\mu(\Delta_1)+\mu(\Delta_2)$ we have from Definition~\ref{def:ginv}~(ii)
that $Q_{\Psi,12}$ is bounded by $M_{\Delta_1\sqcup\Delta_2}\norm{P_{\Delta_1}\Psi}$ and, therefore, can be extended to a bounded linear functional
$Q_{\Psi,12}\colon \H_2\to\CC $. By Riesz's representation theorem there exists a $\chi_2\in\H_2$ such that
\[
  Q_{\Psi,12}(\Phi_2)=\langle \chi_2,\Phi_2\rangle\;,\quad \Phi_2\in\H_2\;.
\]
Consider now the map
\[
C\colon P_{\Delta_1}\D_0\to\H_2\;,\quad C\left(P_{\Delta_1}\Psi\right):=\chi_2\;,
\]
which is linear since $Q$ is sesquilinear and bounded by $M_{\Delta_1\sqcup\Delta_2}$. Moreover, since the projections \hbox{$P_{\Delta_i}$, $i=1,2$}, reduce the representation
$V$ we have for any $g\in G$, $\Psi,\Phi\in\D_0$,
\begin{eqnarray*}
  \big\langle C\big(P_{\Delta_1}V(g)\Psi\big)\,,\,P_{\Delta_2}\Phi\big\rangle
                  &=& Q\big( P_{\Delta_1}V(g)\Psi \,,\,P_{\Delta_2}\Phi\big)
                 \;=\; Q\big( P_{\Delta_1}\Psi \,,\,V(g)^* P_{\Delta_2}\Phi\big)\\
                 &=&  \big\langle V(g) C\big(P_{\Delta_1}\Psi\big)\,,\,P_{\Delta_2}\Phi\big\rangle\;,
\end{eqnarray*}
hence
\begin{equation}
   \big\langle C(P_{\Delta_1}V(g)\Psi)-V(g)C(P_{\Delta_1}\Psi)\,,\,P_{\Delta_2}\Phi\big\rangle  = 0\\
\end{equation}
and, from the density result in Lemma~\ref{lem:denseD}, we conclude that
$$ CV_{\Delta_1}(g) = V_{\Delta_2}(g)C,$$
i.e., $C\in \big(V_{\Delta_1},V_{\Delta_2}\big)$ and intertwines both representations. By Lemma~\ref{lem:disjoint} we have that $V_{\Delta_1}$ and $,V_{\Delta_2}$ are disjoint
so that $C=0$ and therefore $Q(P_{\Delta_1}\Phi,P_{\Delta_2}\Psi) = 0$.

(ii) The first part of the theorem shows that condition \ref{H1} holds and, by assumption on the measure space, we have also \ref{H2}.
Moreover, \ref{H3} is a direct consequence of the boundedness assumption of finite measure sets in Definition~\ref{def:ginv}~(\ref{item:ginv2}) and Corollary~\ref{cor:Katorepisstrongrep}.
By Theorem~\ref{thm:mainthm1} we have still to show that $\D_0\subset\D(|T|^{1/2})$. By Theorem~\ref{thm:mainthm2} it is enough to show that $\D_0 = \Df$, cf.\ Definition~\ref{def:dfin}. This follows by the boundedness on finite measure sets and the fact that for all $\Phi\in\D_0$ we have that $P_\Delta\Phi = \Phi$ for some $\Delta\in\Sigma(\A)$ with $\mu(\Delta)<\infty$.
\end{proof}

\begin{example}\label{ex:compact-group} {\bf (Compact groups and Peter-Weyl theorem)}
 Let $G$ be a compact group and consider the Hilbert space $\H=L^2(G)$ with the corresponding Haar measure.
 Consider the left regular representation of $G$ on $\H$: $(L(g_0)\varphi):=\varphi(g_0^{-1}g)$, $g_0\in G$.
 By Peter-Weyl's theorem (see, e.g., \cite[\S27.49]{hewitt-ross-2}) we have
 \[
  L^2(G)=  {\bigoplus_{\a \in \widehat{G}} n(\a) \H^\a}
  \quad\mathrm{and}\quad
  L(g) = {\bigoplus_{\a \in \widehat{G}} n(\a) L^\a}
 \;,
 \]
 where $\mathcal{H}^\alpha$ is the support space of the irreducible representation labeled by $\alpha$ and the multiplicity $n(\alpha)$ satisfies $n(\a)=\mathrm{dim} \H^\a<\infty$. Therefore, we choose as a
 discrete space $\A:=\widehat{G}$ with the counting measure $\mu$ for the decomposition of $L$.
 \begin{lemma}\label{lem:cond}
  Let $Q$ be a quadratic form on $\H=L^2(G)$ on a dense domain $\D_0$ with the property that if $\Phi\in\D_0$, then there
  is a $\Delta\subset \widehat{G}$ with $\mu(\Delta)<\infty$ and $P_\Delta\Phi=\Phi$. Then,
  \begin{itemize}
   \item[(i)] for any, $\Delta\subset \widehat{G}$ with $\mu(\Delta)<\infty$ we have $P_\Delta\D_0=P_\Delta\H$ and $P_{\Delta}\D_0\subset\D_0$;
              the quadratic form $Q$ restricted to $P_\Delta\H$ is bounded;
   \item[(ii)] The domain $\D_0$ is invariant;
   \item[(iii)] The quadratic form $Q$ is $\Sigma$-bounded, i.e., for any $\Delta\subset \widehat{G}$ we have
   $|Q(P_\Delta\Phi)|< M_\Phi$,\quad \hbox{$\Phi\in\D_0$}.
  \end{itemize}
 \end{lemma}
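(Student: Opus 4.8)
The plan is to read off all three statements from one structural fact: since $\mu$ is the counting measure on $\widehat{G}$, a set $\Delta\subset\widehat{G}$ has $\mu(\Delta)<\infty$ exactly when it is finite, and then Peter--Weyl identifies $\H_\Delta=P_\Delta\H=\bigoplus_{\a\in\Delta}n(\a)\H^\a$ as a \emph{finite dimensional} Hilbert space, of dimension $\sum_{\a\in\Delta}n(\a)^2<\infty$, because $n(\a)=\dim\H^\a<\infty$. Every assertion below is a manifestation of this finite dimensionality, and this is precisely where the compactness of $G$ enters.

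First I would prove (i). Density of $\D_0$ in $\H$ together with continuity of $P_\Delta$ gives that $P_\Delta\D_0$ is dense in $\H_\Delta$; being a subspace of the finite dimensional space $\H_\Delta$ it is automatically closed, hence $P_\Delta\D_0=\H_\Delta$. For stability, take $\Phi\in\D_0$ and use the finite-support hypothesis to fix a finite $\Delta_\Phi$ with $P_{\Delta_\Phi}\Phi=\Phi$; then for an arbitrary $\Delta\subset\widehat{G}$ one has $P_\Delta\Phi=P_{\Delta\cap\Delta_\Phi}\Phi\in\H_{\Delta\cap\Delta_\Phi}$, and combining this with the identity $P_{\Delta'}\D_0=\H_{\Delta'}$ for the finite set $\Delta'=\Delta\cap\Delta_\Phi$ together with the finite-support structure of $\D_0$ one concludes $P_\Delta\Phi\in\D_0$, i.e.\ $\D_0$ is stable. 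Finally, $Q$ restricted to $\H_\Delta=P_\Delta\H\subset\D_0$ is a Hermitian quadratic form on a finite dimensional Hilbert space, hence continuous, so there is $M_\Delta>0$ with $|Q(P_\Delta\Phi)|<M_\Delta\norm{P_\Delta\Phi}^2$ for all $\Phi\in\D_0$, which is the asserted boundedness.

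Next I would treat (ii) and (iii) together. The decomposition $\L^2(G)=\bigoplus_\a n(\a)\H^\a$ is the isotypic decomposition for the left regular representation, so each isotypic block, and hence each $P_\Delta$, commutes with $L(g)$ for all $g\in G$. Therefore, for $\Phi\in\D_0$ with $P_{\Delta_\Phi}\Phi=\Phi$ we get $L(g)\Phi=P_{\Delta_\Phi}\bigl(L(g)\Phi\bigr)\in\H_{\Delta_\Phi}\subset\D_0$ by part (i), using that $L(g)$ commutes with $P_{\Delta_\Phi}$ and $P_{\Delta_\Phi}\Phi=\Phi$; applying this also to $g^{-1}$ yields $L(g)\D_0=\D_0$, the invariance of the domain. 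For (iii), fix $\Phi\in\D_0$ with finite support $\Delta_\Phi$; as $\Delta$ runs over all subsets of $\widehat{G}$, the vector $P_\Delta\Phi=P_{\Delta\cap\Delta_\Phi}\Phi$ takes only finitely many values, indexed by the subsets $S\subset\Delta_\Phi$, each of them in $\D_0$ by stability, so one simply sets $M_\Phi:=\max\{\,|Q(P_S\Phi)|\mid S\subset\Delta_\Phi\,\}<\infty$ and $\Sigma$-boundedness follows.

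The step I expect to be the main obstacle is the stability assertion in (i): one has to pass from the Hilbert-space identity $P_{\Delta'}\D_0=\H_{\Delta'}$, valid for every finite $\Delta'$, to the genuine membership $P_\Delta\Phi\in\D_0$ for every $\Delta$ and every $\Phi\in\D_0$, that is, to reconcile the purely algebraic finite-support structure of $\D_0$ with the topological density hypothesis. Once stability is secured, the finite dimensionality of the blocks $\H_\Delta$ makes the remaining items essentially automatic: dense subspaces of finite dimensional spaces are the whole space, Hermitian quadratic forms on them are bounded, and the commutation of $L(g)$ with the central projections $P_\Delta$ delivers invariance of the domain and $\Sigma$-boundedness by the elementary bookkeeping above.
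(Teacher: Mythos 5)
Your overall strategy coincides with the paper's: reduce everything to the finite dimensionality of $\H_\Delta=P_\Delta\H$ for $\mu(\Delta)<\infty$ (Peter--Weyl), get $P_\Delta\D_0=\H_\Delta$ from density, obtain boundedness of $Q$ on $\H_\Delta$ from finite dimensionality of that block, deduce invariance of the domain from the fact that the isotypic projections reduce $L$, and prove $\Sigma$-boundedness by reducing $P_\Delta\Phi$ to $P_{\Delta\cap\Delta_\Phi}\Phi$. The only real divergence is in (iii): the paper bounds $|Q(P_\Delta\Phi)|=|Q(P_{\Delta_\Phi}P_\Delta\Phi)|\leq M_{\Delta_\Phi}\norm{P_{\Delta_\Phi}P_\Delta\Phi}^2\leq M_{\Delta_\Phi}\norm{\Phi}^2$ using the constant from (i) and contractivity of projections, whereas you take a maximum over the finitely many vectors $P_S\Phi$, $S\subset\Delta_\Phi$; both are fine and equally elementary.

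That said, the step you yourself single out as the main obstacle --- passing from $P_{\Delta'}\D_0=\H_{\Delta'}$ to $P_\Delta\Phi\in\D_0$ --- is not actually closed by what you wrote: ``combining \dots one concludes'' is an assertion, not an argument, and the needed implication does not follow from the stated hypotheses. What stability (and indeed the very meaningfulness of ``$Q$ restricted to $P_\Delta\H$'') requires is $\H_{\Delta'}\subseteq\D_0$ for every finite $\Delta'$, and this is not a consequence of density of $\D_0$ plus the finite-support property. Concretely, in $\H=\bigoplus_{k\in\N}\CC$ with one-dimensional blocks, the linear span of $\{e_1+\tfrac1k e_k \mid k\geq 2\}$ is dense and consists of finitely supported vectors, yet it does not contain $e_1=P_{\{1\}}\bigl(e_1+\tfrac12 e_2\bigr)$, so such a domain is not stable. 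To repair this one must read the hypothesis on $\D_0$ more strongly, e.g.\ assume that $\D_0$ contains $\H_\Delta$ for every finite $\Delta$ (the algebraic direct sum of the isotypic components). In fairness, the paper's own proof is exactly as terse at this point (``by the defining property of $\D_0$, we conclude that it is also stable''), so your attempt reproduces the published argument faithfully, gap included; but since you flagged this as the step you could not justify, be aware that it is not a routine verification you omitted --- it genuinely requires an additional assumption on $\D_0$.
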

 \begin{proof}
  (i) Since $\D_0$ is dense and any $\H^\a$ is finite dimensional and appears with finite multiplicity we conclude that for any
   $\Delta\subset \widehat{G}$ with $\mu(\Delta)<\infty$ we have $P_\Delta\D_0=P_\Delta\H$. Moreover, the restriction of $Q$ to
   the finite dimensional Hilbert space $P_\Delta\H$ is finite and, by the defining property of
   $\D_0$, we conclude that it is also stable, i.e., $P_{\Delta}\D_0\subset\D_0$.

  (ii) Since any $P_\Delta$, $\Delta\subset\widehat{G}$, reduces $L$ we have from the definition of $\D_0$ that $L(g)\D_0=\D_0$,
  $g\in G$.

  (iii) Finally, to prove $\Sigma$-boundedness let $\Phi\in\D_0$. Then, there exists $\Delta_\Phi\subset \widehat{G}$ with $\mu(\Delta_\Phi)<\infty$ such that $\Phi = P_{\Delta_\Phi}\Phi$. Take $\Delta \subset \widehat{G}$. Then we have that
    $$|Q(P_\Delta\Phi)| = |Q(P_{\Delta_\Phi} P_\Delta\Phi)| \leq M_{\Delta_\Phi}\norm{P_{\Delta_\Phi}P_\Delta \Phi}^2 \leq M_{\Delta_\Phi}\norm{\Phi}^2=: M_{\Phi}\,,$$
    where we have used that (\ref{item:ginv2}) of Definition~\ref{def:ginv} holds.
 \end{proof}

 \begin{proposition}
  Let $G$ be a compact metrisable group and
  $Q$ be a closable quadratic form on $\H=L^2(G)$ on a dense domain $\D_0$ with the property that if $\Phi\in\D_0$ then there
  is a $\Delta\subset \widehat{G}$ with $\mu(\Delta)<\infty$ and $P_\Delta\Phi=\Phi$. If $Q$ is invariant under the left regular representation $L$, i.e., $Q(L(g)\Phi)=Q(\Phi)$, $g\in G$, then $Q$ is strongly representable.
 \end{proposition}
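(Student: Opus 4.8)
The plan is to verify that the quadratic form $Q$ satisfies the hypotheses \ref{H1}--\ref{H3} for the measure space $(\widehat G,\mathcal P(\widehat G),\mu)$ given by the Peter-Weyl decomposition, and then invoke Theorem~\ref{thm:mainthm2} (together with the argument for strong representability). The key observation, supplied by Lemma~\ref{lem:cond}, is that $Q$ is already bounded on each finite-measure subspace $P_\Delta\H$; this will let us identify the domain $\D_0$ with $\Df$ and upgrade weak to strong representability, exactly as in the proof of Theorem~\ref{thm:ginvariantoa}(ii).

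First I would note that $\H=\L^2(G)=\bigoplus_{\a\in\widehat G} n(\a)\H^\a$ is a direct integral over the point-supported counting measure $\mu$ on $\widehat G$, so \ref{H2} holds trivially. Next, by Lemma~\ref{lem:cond}(i)--(iii) the form $Q$ has stable domain, is $\Sigma$-bounded, and satisfies the boundedness-on-finite-measure-sets condition; together with the hypothesis that $Q$ is $L$-invariant and closable, and Lemma~\ref{lem:cond}(ii), this says precisely that $Q$ is $G$-invariant in the sense of Definition~\ref{def:ginv}. Hence Theorem~\ref{thm:ginvariantoa}(i) applies and $Q$ is countably orthogonally additive, so by Theorem~\ref{thm:oa=coa} it is orthogonally additive, which is \ref{H1}. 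For \ref{H3}, each restricted form $q_\a\colon P_\a\D_0\to\R$ lives on the finite-dimensional space $\H^\a$ (which has finite multiplicity), so $q_\a$ is a Hermitean form on a finite-dimensional space and is therefore automatically bounded, in particular semibounded; this gives \ref{H3}.

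With \ref{H1}--\ref{H3} in hand, Theorem~\ref{thm:mainthm2} yields that $Q$ is weakly representable by the self-adjoint operator $T$ obtained by integrating the family $\{E^\a(\cdot)\}$ of Proposition~\ref{pro:integrateE}. To obtain strong representability I would show $\D_0=\Df$, arguing as in the proof of Theorem~\ref{thm:ginvariantoa}(ii): for any $\Phi\in\D_0$ there is a $\Delta\subset\widehat G$ with $\mu(\Delta)<\infty$ and $P_\Delta\Phi=\Phi$, and since $q_\a$ is bounded on each $\H^\a$ with $\a\in\Delta$ the support of the spectral measure $\nu^\a_\Phi$ is contained in a common compact $\sigma\subset\R$; thus $\Phi=R_\sigma P_\Delta\Phi\in\Df$ and conversely, so $\D_0\subset\overline{\Df}^{\normm{\cdot}}$ and Theorem~\ref{thm:mainthm1} applies.

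The main obstacle I anticipate is bookkeeping rather than conceptual: one must make sure the uniform bound on the spectra of the $T^\a$ over $\a\in\Delta$ is genuinely uniform (it is, because $\Delta$ is a \emph{finite} subset of $\widehat G$ by the finite-measure condition and each $\H^\a$ is finite-dimensional, so only finitely many self-adjoint matrices are involved), and that the identification $\D_0=\Df$ is compatible with the graph norm $\normm{\cdot}$ of Definition~\ref{def:extensionD}. Once these points are checked, strong representability follows from Theorem~\ref{thm:mainthm1} and the proof is complete.
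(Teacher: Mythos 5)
Your proposal is correct and follows essentially the same route as the paper: Lemma~\ref{lem:cond} gives $G$-invariance in the sense of Definition~\ref{def:ginv}, Theorem~\ref{thm:ginvariantoa}(i) gives countable orthogonal additivity, and the point-supported (counting) measure on $\widehat{G}$ together with the finite-measure support condition on $\D_0$ yields strong representability via Theorem~\ref{thm:ginvariantoa}(ii). The only difference is that you unpack the proof of Theorem~\ref{thm:ginvariantoa}(ii) (verifying \ref{H3} and $\D_0\subset\Df$ by finite-dimensionality of the isotypic blocks) where the paper simply cites it, which is a harmless elaboration.
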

 \begin{proof}
  Note that by Lemma~\ref{lem:cond} the $L$-invariance of $Q$ already implies that $Q$ is $G$-invariant
  (recall Definition~\ref{def:ginv}) and, by Theorem~\ref{thm:ginvariantoa}~(i),
  we conclude that $Q$ is countably orthogonally additive.
  Finally, since $G$ is compact and metrisable by \cite[Proposition 5.11]{robert83} we have that $\widehat{G}$ is countable. The measure $\mu$ is point supported so that Theorem~\ref{thm:ginvariantoa}~(ii) implies that $Q$ is strongly representable.
 \end{proof}

\end{example}

\section{Conclusions}\label{sec:conclu}
We conclude commenting on some aspects of uniqueness that naturally appear around the representation theorem proven.
Conditions \ref{H1}, \ref{H2} and \ref{H3} are sufficient conditions for an
Hermitian and closable quadratic form $(Q,\D)$ to be strongly representable by an operator $T$
(see Theorem~\ref{thm:mainthm1}).
It is clear that replacing \ref{H3} by the stronger condition requiring the semiboundedness of the forms $q_\alpha$ (cf. Corollary ~\ref{cor:mainthm}) and given $\H$ as a direct integral over a point supported measure space, the representing operator is uniquely determined. In fact, $T$ decomposes along the direct integral representation and the semibounded quadratic forms $q_\alpha$ are uniquely represented by the corresponding $T_\alpha$, $\alpha\in\mathcal{A}$. On the other hand, we have already seen in Subsection~\ref{subsec:position}
that the measure space $(\A,\Sigma(\A),\mu)$ is highly non unique and, in concrete examples, there are many ways to realise
the hypothesis \ref{H2} for a representing operator $T$. It remains an open question to decide under which conditions the operator $T$ strongly representing a Hermitian quadratic form $(Q,\D)$ is uniquely given.\\



\begin{thebibliography}{GKMV13}

\bibitem[AG93]{akhiezer93}
{\sc N.I. Akhiezer and I.M. Glazman},
{\em Theory of Linear Operators in Hilbert Space},
{Dover Publications, N.Y. (1993).}

\bibitem[AE12]{arendt12}
{\sc W. Arendt and A.F.M. Ter Elst}, {\em Sectorial forms and degenerate differential operators},
J. Operator Theory {\bf 67} (2012) 33--72.

\bibitem[BdCPP]{balmaseda19}
{\sc A. Balmaseda, F. di Cosmo and J.M. Pérez-Pardo}, {\em On Z-Invariant Self-Adjoint Extensions of the Laplacian on Quantum Circuits},
Symmetry {\bf 11} (2019) 1047.

\bibitem[BLL06]{benyamini06} {\sc Y. Benyamini, S. Lassalle and J.G. Llavona},
{\em Homogeneous orthogonally-additive polynomials on Banach
lattices}, Bull. London Math. Soc. \textbf{38} (2006) 459--469.

\bibitem[BPP]{balmaseda19b}
{\sc A. Balmaseda and J.M. Pérez-Pardo}, {\em Quantum Control at the Boundary},
Classical and Quantum Physics. Springer Proceedings in Physics 229. 2019.

\bibitem[BR04]{Buskes-Van Rooij 2004} {\sc G. Buskes and A. van Rooij},
{\em Squares of Riesz spaces}, Rocky Mountain J. Math.  \textbf{31}
(1) 45--56 (2004).

\bibitem[BS87]{Birman-Solomjak-87}
{\sc M.S.~Birman and M.Z.~Solomjak},
{\em Spectral Theory of self-adjoint Operators in Hilbert Space},
D. Reidel Publ. co., Dordrecht, 1987.


\bibitem[CLZ06]{Carando-Lassalle-Zalduendo 2006}
{\sc D. Carando, S. Lassalle
and I. Zalduendo}, {\em Orthogonally additive polynomials over
$C(K)$ are measures--a short proof}, Integral Equations Operator
Theory  \textbf{56} (2006) 597--602.

\bibitem[Cor19]{corso:19}
{\sc R. Corso}, {\em A Kato's second type representation theorem for solvable sesquilinear forms},
J. Math. Anal. Appl. {\bf 462} (2019) 982--998.

\bibitem[Dav95]{davies:95}
E.B.~Davies, \emph{Spectral theory and differential operators}, Cambridge
University Press, Cambridge, 1995.

\bibitem[dSFHW14]{dsnoo:14}
{\sc H. de Snoo, A. Fleige, S. Hassi and H. Winkler}, {\em Non-semibounded closed symmetric forms associated with a generalized Friedrichs extension},
Proc. Roy. Soc. Edinburgh Sect. A {\bf 144} (2014) 731--745.

\bibitem[dBT17]{dbella:17}
{\sc S. Di Bella and C. Trapani}, {\em Some representation theorems for sesquilinear forms}, J. Math. Anal. Appl. {\bf 451} (2017) 64--83.

\bibitem[Dix81]{dixmier81}
{\sc J.~Dixmier},
\newblock {\em von Neumann Algebras},
\newblock North-Holland Publishing Company, 1981.

\bibitem[FL19]{fabila-lledo19}
{\sc J.S.~Fabila-Carrasco and F. Lled\'o},
{\em Covering graphs, magnetic spectral gaps and applications to polymers and nanoribbons},
Symmetry-Basel {\bf 11} (2019) 1163.

\bibitem[FLP18]{fabila-lledo-post18}
{\sc J.S.~Fabila-Carrasco, F. Lled\'o and O. Post},
{\em Spectral gaps and discrete magnetic Laplacians},
Lin. Alg. Appl. {\bf 547} (2018) 183--216.

\bibitem[Fre10]{fremlin10}
{\sc D.~H.~Fremlin},
\newblock {\em Measure Theory. Vol.2},
\newblock Torres-Fremlin, 2010.

\bibitem[GKMV13]{grubisic13}
{\sc L.~Grubisic, V.~Kostrykin, K.A. Makarov, and K.~Veselic},
\newblock Representation theorems for indefinite quadratic forms revisited,
\newblock {\em Mathematika}, {\bf 59} (2013) 169--189.

\bibitem[Hall13]{Hall13}
{\sc B.C.~Hall},
\newblock {\em Quantum Theory for Mathematicians},
\newblock Springer Verlag, New York, 2013.

\bibitem[HR70]{hewitt-ross-2}
E.~Hewitt and K.~A. Ross, \emph{Abstract harmonic analysis. {V}ol. {II}},
  Springer-Verlag, New York, 1970.

\bibitem[ILL09]{Ibort-Linares-Llavona 2009} {\sc A. Ibort, P. Linares and J.G. Llavona},
{\em On the Representation of Orthogonally Additive Polynomials in
$\ell_p$ }, Publ. Res. Inst. Math. Sci. Volume  \textbf{45} (2009)
519--524.

\bibitem[ILL12]{Ib12} {\sc A. Ibort, P. Linares, J.G. Llavona},
\textit{A representation theorem for orthogonally additive polynomials in Riesz Spaces},
Rev. Mat. Complutense. {\bf 25} 21--30 (2012).

\bibitem[ILPP15]{Ib14} {\sc A. Ibort, F. Lled\'o, J.M. P\'erez--Pardo},
\textit{Self-adjoint extensions of the Laplace-Beltrami operator and unitaries at the boundary}, J. Funct. Analysis, \textbf{268} (2015) 634--670.

\bibitem[ILPP15b]{Ib14b} {\sc A. Ibort, F. Lled\'o, J.M. P\'erez--Pardo},
\textit{On self-adjoint extensions and symmetries in quantum mechanics}, Ann. H. Poincar\'e \textbf{16} (2015) 2367--2397.

\bibitem[IPP13]{Ib13} {\sc A. Ibort, J. M. P\'erez--Pardo},
\textit{Numerical solutions of the spectral problem for arbitrary self-adjoint extensions of 1D Schr\"odinger equation},  SIAM J. Num. Anal., {\bf 51} (2) 1254--1279 (2013).

\bibitem[IPP15]{Ib15}  {\sc A. Ibort, J.M. P\'erez-Pardo},
\textit{On the theory of self-adjoint extensions of symmetric operators and its applications to Quantum Physics},  Int. J. Geom. Methods in Modern Physics \textbf{12} (2015) 1560005.

\bibitem[KR86]{Kadison-Ringrose-II}
{\sc R.V.~Kadison and J.R.~Ringrose},
\newblock {\em Fundamentals of the Theory of Operator Algebras. Vol.~II},
\newblock Academic Press, Orlando, 1986.

\bibitem[Kat55]{kato55}
{\sc T.~Kato},
\newblock {\em Quadratic forms in Hibert spaces and asymptotic perturbation
  series}, volume~7 of {\em Technical Report},
\newblock 1955.

\bibitem[Kat95]{kato95}
{\sc T.~Kato},
\newblock {\em Perturbation Theory for Linear Operators},
\newblock Classics in Mathematics. Springer, 1995.


\bibitem[LP08a]{lledo-post08a}
{\sc F. Lled\'o and O. Post},
{\em Existence of spectral gaps, covering manifolds and residually finite groups},
Rev. Math. Phys. {\bf 20} (2008) 199--231.

\bibitem[LP08b]{lledo-post08b}
{\sc F. Lled\'o and O. Post},
{\em Eigenvalue bracketing for discrete and metric graphs},
J. Math. Anal. Appl. {\bf 348} (2008) 806--833.

\bibitem[LYPP17]{Lop17} {\sc A.~L\'opez-Yela and J.M.~P\'erez-Pardo},
\newblock {{\em Finite element method to solve the spectral problem for arbitrary self-adjoint extensions of the Laplace-Beltrami operator on manifolds with a boundary}},
\newblock {{J.\ Comp.\ Phys} {\textbf{347}} C (2017) 235--260}.

\bibitem[Mac76]{mackey:76}
{\sc G. W. Mackey}, {\em The theory of Unitary Group Representations}, University of Chicago Press,
Chicago, 1976.

\bibitem[Mc68]{mcintosh1} {\sc A.G.R. McIntosh},
{\em Representation of bilinear forms in Hilbert space by linear operator},  Trans. Amer. Math. Soc. {\bf 131} (1968) 365-377.

\bibitem[Mc69/70]{mcintosh2} {\sc A.G.R. McIntosh}, {\em Bilinear forms in Hilbert space}, J. Math. Mech. {\bf 19} (1969/1970) 1027-1045.

\bibitem[Mc70]{mcintosh3}
{\sc A.G.R.~McIntosh}, {\em Hermitian bilinear forms which are not semibounded}, Bull. Amer. Math. Soc. {\bf 76} (1970) 732–737.

\bibitem[Nus64]{nussbaum64}
{\sc A.E.~Nussbaum},
\newblock Reduction theory for unbounded closed operators in Hilbert space.
\newblock {\em Duke Math. J.}, {\bf 31} (1964) 33--44.

\bibitem[Ped89]{Pedersen89}
{\sc G.K. Pedersen}, {\em Analysis now},
  Springer Verlag, New York, 1989.
  
\bibitem[Per17]{Perez17}
{\sc J.M. Pérez-Pardo}, {\em Dirac-like operators on the Hilbert space of differential forms on manifolds with boundaries}, Int. J. Geom. Methods Mod. Phys {\bf 14} (8) (2017).

\bibitem[PV05]{Perez Garcia-Villanueva 2005} {\sc D.~P{\'e}rez-Garc{\'\i}a and I.~Villanueva},
{\em Orthogonally additive polynomials on spaces of continuous
functions}, J. Math. Anal. Appl. \textbf{306} (2005) 97--105.

\bibitem[Pos16]{post:16}
{\sc O. Post}, {\em Boundary pairs associated with quadratic forms}, Math. Nachr. {\bf 289} (2016) 1052--1099.

\bibitem[RS80]{reed-simon-1}
{\sc M.~Reed and B.~Simon},
\newblock {\em Methods of Modern Mathematical Physics I: Functional Analysis},
\newblock Academic Press, San Diego, 1980.

\bibitem[Rob83]{robert83}
{\sc A.~Robert}
\newblock {\em Introduction to the representation theory of compact and locally
compact groups},
\newblock Cambridge University Press, Cambridge, 1983.

\bibitem[Rob70]{Jonge-Van Rooij 1970}
\newblock {\sc E. de Jonge and A. van Rooij},
\newblock {\em Introduction to Riesz Spaces},
Mathematical Centre Tracts, No. 78. Mathematisch Centrum, Amsterdam, 1977.

\bibitem[Rud86]{rudin86}
{\sc W.~Rudin},
\newblock {\em Real and Complex Analysis},
\newblock McGraw-Hill, third edition, 1986.

\bibitem[Sch15]{schmitz15}
{\sc S.~Schmitz},
{\em Representation theorems for indefinite
quadratic forms without spectral gap},
Integr. Eq. Operator Theory {\bf 83} (2015) 73--94.

\bibitem[Schm12]{schmuedgen12}
{\sc K.~Schm\"udgen}, {\em Unbounded self-adjoint Operators on Hilbert Space},
Springer, Dordrecht, 2012.

\bibitem[Sim18]{Simon18}
{\sc B. Simon},
{\em Tosio Kato's work on non-relativistic quantum mechanics: part 1},
Bull. Math. Sci. {\bf 8} (2018) 121--232.

\bibitem[Sun91]{Sun91}
{\sc K.~Sundaresan},
{\em Geometry of spaces of homogeneous polynomials on Banach lattices}, In
{Applied Geometry and Discrete Mathematics, DIMACS Ser. Discrete Math. Theoret. Comput. Sci., Vol. 4. Amererican Mathematical Society, Providence}, pp. 571--586, 1991.

\bibitem[Wal92]{wallach:92}
N.R. Wallach,
{\em Real Reductive Groups. Vol. II}, Pure and Applied Mathematics, vol. 132, Academic Press, Boston, 1992.

\end{thebibliography}
\end{document}